\documentclass[11pt]{amsart}
\usepackage[T1]{fontenc}
\usepackage{amssymb,amsmath, amsthm, amsfonts}
\usepackage[dvipsnames]{xcolor}
\usepackage{graphicx}
\usepackage{listings}
\usepackage[normalem]{ulem}
\usepackage[margin=1in]{geometry}
\usepackage{lstautogobble}
\usepackage{enumerate}
\usepackage[shortlabels]{enumitem}
\usepackage{thmtools}
\usepackage{thm-restate}
\usepackage{verbatim}
\usepackage{accents}
\usepackage{mathtools}
\usepackage{physics}
\usepackage[colorlinks=true, allcolors=magenta,linkcolor=blue, citecolor=magenta]{hyperref}
\usepackage[numbered]{bookmark}
\usepackage[capitalise]{cleveref}
\usepackage[bottom]{footmisc}
\crefformat{equation}{(#2#1#3)}
\usepackage{float}
\restylefloat{table}
\usepackage{mathrsfs}
\setlist{listparindent=\parindent,parsep=0pt}

\theoremstyle{plain}
\newtheorem{thm}{Theorem}[section]
\newtheorem{prop}[thm]{Proposition}
\newtheorem{lemma}[thm]{Lemma}
\newtheorem{cor}[thm]{Corollary}

\theoremstyle{definition}
\newtheorem{mydef}[thm]{Definition}

\newtheorem{remark}[thm]{Remark}

\Crefname{thm}{Theorem}{Theorems}
\Crefname{prop}{Proposition}{Propositions}
\Crefname{cor}{Corollary}{Corollaries}
\crefname{cor}{Corollary}{Corollaries}

\numberwithin{equation}{section}


\DeclareMathOperator{\supp}{supp}

\DeclareMathOperator{\diam}{diam}

\DeclareMathOperator{\dist}{dist}

\newcommand{\mmd}{\mathsf{MMD}}
\newcommand{\dissip}{\mathsf{D}}

\newcommand{\p}{{\partial}}

\renewcommand{\d}{\mathsf{d}}
\newcommand{\R}{{\mathbb{R}}}

\newcommand{\N}{{\mathbb{N}}}

\newcommand{\T}{{\mathbb{T}}}
\newcommand{\g}{{\mathsf{g}}}

\newcommand{\nab}{\nabla}

\newcommand{\D}{\Delta}

\newcommand{\ul}{\underline}

\newcommand{\ep}{\epsilon}

\newcommand{\be}{\beta}

\newcommand{\la}{\lambda}

\newcommand{\indic}{\mathbf{1}}

\newcommand{\Ec}{\mathcal{E}}

\renewcommand{\P}{\mathcal{P}}

\let\div\relax
\DeclareMathOperator{\div}{\mathrm{div}}

\def\XXint#1#2#3{{\setbox0=\hbox{$#1{#2#3}{\int}$ }
    \vcenter{\hbox{$#2#3$ }}\kern-.6\wd0}}

\let\oldtocsection=\tocsection
\let\oldtocsubsection=\tocsubsection
\let\oldtocsubsubsection=\tocsubsubsection
\renewcommand{\tocsection}[2]{\hspace{0em}\oldtocsection{#1}{#2}}
\renewcommand{\tocsubsection}[2]{\hspace{1em}\oldtocsubsection{#1}{#2}}
\renewcommand{\tocsubsubsection}[2]{\hspace{2em}\oldtocsubsubsection{#1}{#2}}

\title{Wasserstein gradient flows for Coulomb discrepancies}

\author[Chodron de Courcel]{Antonin Chodron de Courcel}
\address{CNRS, École Normale Supérieure, PSL Université, 45 rue d'Ulm, 75005 Paris, France}
\email{decourcel@ihes.fr}

\author[Rosenzweig]{Matthew Rosenzweig}
\address{Department of Mathematical Sciences, Carnegie Mellon University, Pittsburgh, PA 15213, USA}
\email{mrosenz2@andrew.cmu.edu}
\thanks{A.C. is supported by the European Research Council (ERC project Wolf). M.R. was supported by NSF grants DMS-2441170, DMS-2345533, and DMS-2342349.}
\hypersetup{
  pdftitle={Wasserstein gradient flows for Coulomb discrepancies},
  pdfauthor={Antonin Chodron de Courcel and Matthew Rosenzweig}
}

\begin{document}

\begin{abstract}
    We study the long-time behavior of the Wasserstein gradient flow of the squared Maximum Mean Discrepancy (MMD) between a probability measure $\rho$ and a target measure $\mu$, where the underlying kernel is given by a Coulomb potential.

    First, for target densities $\mu\in\P(\Omega)\cap L^\infty(\Omega)$, we establish the existence of global weak solutions starting from arbitrary Borel probability measures and prove an ultracontractive estimate, showing that the density $\rho_t$ becomes instantly bounded in $L^\infty$ for $t>0$. We also investigate the regularity of these solutions, showing that the H\"older norm can grow exponentially in time.

    Second, on the flat torus $\T^\d$, we prove a global-in-the-source metric PL inequality for every source of finite Coulomb energy and for nearly uniform bounded targets. For general bounded, uniformly positive targets, we prove exponential decay of the squared MMD without requiring a lower bound on the initial data, using a defective PL inequality whose defect accounts for possible vacuum regions in the evolving density. We also prove that the usual PL inequality may fail when the target vanishes only at one point and, in dimensions at least two, that, for each prescribed lower bound $\lambda\in(0,1)$, no PL constant depending only on $(\d,\lambda)$ can hold uniformly over all targets satisfying $\mu\ge\lambda$.

    On $\R^\d$, for $\d\ge2$, under radial symmetry, source-support inclusion, and target-positivity assumptions, we establish a PL inequality and exponential convergence. On the unrestricted whole-space class, by contrast, we identify an obstruction at spatial infinity: for a compactly supported bounded target, uniformly localized sources initially separated from the target by distance $D$ retain a fixed fraction of their initial squared MMD for times of order $D$. Consequently, neither a multiplicative squared-MMD decay modulus uniform over the initial datum nor a global PL inequality can hold on this class.

    Finally, in every dimension and on both $\R^\d$ and $\T^\d$, we prove that every Lagrangian critical point for which the positive part $(\rho-\mu)^+$ is absolutely continuous coincides with the target. In dimension two, finite Coulomb energy supplies uniform tightness through a logarithmic-capacity estimate. As a consequence, every solution constructed here whose Coulomb energy is finite at some positive time converges to the target narrowly, weak-$*$ in $L^\infty$, and strongly in the inhomogeneous space $H^{-\alpha}(\R^2)$ for every $\alpha>0$.
\end{abstract}

\maketitle

\section{Introduction}

\subsection{Problem setting and variational structure}

We consider the Cauchy problem for the following nonlocal transport equation:
\begin{equation}\label{eq:PDE}
    \begin{cases}
        \p_t \rho - \div ( \rho \nab \g\ast (\rho - \mu)) = 0 \\
        \rho\vert_{t=0} = \rho_0
    \end{cases}
    \qquad (t,x) \in [0,\infty) \times \Omega,
\end{equation}
where $\Omega$ is either the Euclidean space $\R^\d$ or the flat torus $\T^\d$, {which we identify with the hypercube $(-\frac12,\frac12)^\d$ with periodic boundary conditions}, and $\mu, \rho_0 \in \P(\Omega)$ are given probability measures. The kernel $\g$ denotes the Coulomb potential, which satisfies
\begin{equation}
    -\D\g = \begin{cases}
        \delta_0 - 1 & \text{on } \T^\d, \\
        \delta_0 & \text{on } \R^\d.
    \end{cases}
\end{equation}
The measure $\mu$ acts as a fixed target configuration, while $\rho_t$ represents the evolving probability measure. Under the additional hypothesis $\mu\in L^\infty(\Omega)$, the global weak solution constructed in \cref{thm:Cauchy} has a bounded density for every $t>0$.

Equation \eqref{eq:PDE} admits a natural variational interpretation as the Wasserstein gradient flow of the functional
\begin{equation}
    \mmd^2(\rho,\mu) := \frac12 \int_{\Omega} \g\ast (\rho- \mu )\, d(\rho-\mu),
\end{equation}
with respect to the Wasserstein distance $W_2$. For background on the general theory of Wasserstein gradient flows, we refer to \cite{AGS08,San15,FG23}. We use $\mmd^2$ for the energy-normalized squared MMD; with the present Coulomb normalization,
\begin{equation}
2\mmd^2(\rho,\mu)=\|\rho-\mu\|_{\dot H^{-1}}^2
\end{equation}
whenever the Coulomb energy is finite.

The functional $\mmd^2(\rho,\mu)$ is often referred to as the squared Maximum Mean Discrepancy (MMD) in statistics \cite{Gretton12,AKSG19}. In the context of generative modeling, equations of the form \eqref{eq:PDE} have recently received renewed attention as learning dynamics for transport methods that aim to match an easy-to-sample source distribution to a complex target $\mu$ \cite{deng2026generativemodelingdrifting,han2026onestepgenerativemodelingwasserstein,gretton2026wassersteingradientflowinterpretation}. In a complementary direction, MMD has been used as the matching loss for diffeomorphic generative models parameterized by kernel-based ODE transport maps \cite{Pandey24}. Closely related measure-valued Wasserstein gradient flows arise as mean-field descriptions of the training dynamics of shallow neural networks \cite{ChizatBach18,Mei18}; equation \eqref{eq:PDE} may be viewed as their Coulomb-kernel analogue. From the perspective of statistical physics, $\mmd^2(\rho,\mu)$ corresponds to the relative (or modulated) energy of the configuration $\rho$ with respect to $\mu$, and \eqref{eq:PDE} describes the mean-field evolution of a system of particles interacting via Coulomb forces in the background field generated by a fixed distribution $\mu$ of particles of the opposite sign. For more on this topic, we refer to the book \cite{Ser24} and the recent surveys \cite{chaintronPropagationChaosReview2022,golseMeanFieldLimitsStatistical2022} on mean-field limits.

The gradient flow structure of \eqref{eq:PDE} implies the formal dissipation identity
\begin{equation}
    \frac{d}{dt} \mmd^2(\rho_t,\mu) = -\dissip(\rho_t\mid\mu), \quad \text{where} \quad \dissip(\rho\mid\mu) := \int_\Omega |\nabla \g\ast (\rho-\mu)|^2\, d\rho.
\end{equation}
The convergence of $\rho_t$ toward the equilibrium $\mu$ is typically mediated by a functional coercivity estimate. Indeed, if $\rho$ is uniformly bounded from below by $\lambda > 0$, an integration by parts yields
\begin{align}
    \dissip(\rho\mid\mu) &\ge \lambda \int_\Omega |\nabla \g\ast (\rho-\mu)|^2\, dx = -\lambda \int_\Omega \g\ast (\rho-\mu) \, \Delta\g\ast(\rho-\mu) \, dx \notag\\
    &= 2\lambda \mmd^2(\rho,\mu),
\end{align}
where we used $-\Delta\g \ast (\rho-\mu) = \rho-\mu$. Combined with the dissipation identity, this leads via Gr\"onwall's lemma to the exponential decay
\begin{equation}
\mmd^2(\rho_t,\mu) \le e^{-2\lambda t}\mmd^2(\rho_0,\mu).
\end{equation}
However, the global validity of a coercivity estimate of the form
\begin{equation}\label{eq:PL-intro}
    \dissip(\rho\mid\mu) \ge c_{\mathrm{PL}} \mmd^2(\rho,\mu),
\end{equation}
which is often referred to as a Polyak--\L{}ojasiewicz (PL) inequality in the optimization literature, is a subtle question that forms a core part of our investigation. The preceding lower-bound argument gives $c_{\mathrm{PL}}=2\lambda$. For a fixed target $\mu$, we call \eqref{eq:PL-intro} global if it holds for every admissible source $\rho$, without a smallness, proximity, or other local condition relating $\rho$ to $\mu$.

Two themes guide our analysis: the Cauchy theory and regularity of \eqref{eq:PDE}, and the coercive relation between its energy and dissipation. The latter governs quantitative convergence, but depends sensitively on the geometry of the domain and on the admissible source and target measures. We now discuss the status of these themes in the existing literature.

\subsection{Literature review}

We begin with the zero-target case $\mu=0$, which represents pure repulsion. In this setting, the existence and uniqueness theory, instantaneous $L^\infty$ regularization, and asymptotic behavior are now well understood \cite{linHydrodynamicLimitGinzburgLandau2000,BLL12,SV14}.

Regarding the Coulomb discrepancy flow with a nontrivial target, Boufad\`ene and Vialard \cite{BV25} proved global existence and uniqueness for H\"older-continuous initial and target densities, assuming compact support in the Euclidean case. On a closed Riemannian manifold, they also obtained exponential convergence when both the initial density $\rho_0$ and the target density $\mu$ are bounded away from zero. Their maximum principle propagates the bounds
\begin{equation}
    \min\!\left\{\operatorname*{ess\,inf}\rho_0,\operatorname*{ess\,inf}\mu\right\}
    \le \rho_t \le
    \max\!\left\{\|\rho_0\|_{L^\infty},\|\mu\|_{L^\infty}\right\},
\end{equation}
so the lower bound required for \eqref{eq:PL-intro} remains uniform in time. Their result therefore does not address whether the PL inequality can hold without a lower bound on the evolving density.

Boufad\`ene and Vialard also distinguish Lagrangian critical points from their stronger, blocked-JKO notion of Wasserstein criticality. They show that a Lagrangian critical measure agrees with the target on the interior of its support. In the Euclidean finite-Coulomb-energy setting, they also exclude Wasserstein critical points for which $(\rho-\mu)^+$ satisfies an explicit scale-uniform lower mass-growth condition, and they record a codimension-one example \cite{BV25}. 
These results leave open, among other cases, absolutely continuous densities supported on positive-measure sets with empty interior and mixed measures for which $(\rho-\mu)^+$ is absolutely continuous. Our rigidity result below settles every Lagrangian critical point for which $(\rho-\mu)^+$ is absolutely continuous; see \cref{sec:criticality-planar} for the precise comparison.

In the one-dimensional whole-space setting, the Coulomb kernel is
$\g(x)=-\frac12|x|$ and, after rescaling, 
\eqref{eq:PDE} is the Wasserstein gradient flow of the
energy-distance-kernel MMD studied by Duong, Stein, Beinert,
Hertrich, and Steidl
\cite{DuongSteinBeinertHertrichSteidl26}. Using the isometric
identification of $\P_2(\R)$ with the cone of quantile functions in
$L^2(0,1)$, they characterized this gradient flow for arbitrary initial
and target measures in $\P_2(\R)$. For discrete targets, they derived
explicit solution formulas, and they also establish invariance and
smoothing properties of the flow. The scopes of their work and our own are complementary: their
argument is specific to the real line and exploits the exactly solvable structure of the one-dimensional case, whereas our Eulerian construction applies in every dimension
on both $\R^\d$ and $\T^\d$, starts from an arbitrary probability
initial measure, and yields instantaneous $L^\infty$ regularization
when the target has a bounded density.

During the preparation of our manuscript, we learned of the independent work of Chizat, Colombo, Colombo, and Fern\'andez-Real \cite{CCCFR26}, which studies Wasserstein gradient flows of Sobolev and Riesz kernel discrepancies on the torus. At the Coulomb endpoint, they prove global weak-$*$ convergence to an arbitrary bounded target density for bounded initial densities, without assuming a positive lower bound on either density. When the target is uniformly positive, they obtain quantitative exponential convergence of the energy even when the initial density has vacuum regions, by means of a maximum-principle and gap-filling argument; under additional lower-bound and regularity assumptions, they also obtain convergence in stronger topologies. For higher-order Sobolev discrepancies, they establish local algebraic rates under additional regularity and smallness assumptions. Thus, qualitative convergence to a bounded target which may vanish is known in the bounded-density class, while quantitative rates and stronger-topology convergence for degenerate targets remain poorly understood. Their well-posedness and convergence theory is formulated in a bounded-density weak class. By contrast, the construction below starts from an arbitrary Borel probability measure, uses the instantaneous $L^\infty$ regularization to enter a bounded-density regime at positive times, and derives quantitative convergence for uniformly positive targets directly from a defective PL inequality. The present paper also addresses global coercivity obstructions at spatial infinity and the radial Euclidean problem with source-support inclusion.

Wasserstein steepest-descent flows for discrepancy functionals with Riesz kernels have also been studied from variational, approximation, and numerical perspectives by Hertrich, Gr\"af, Beinert, and Steidl \cite{HGBS24} and by Altekr\"uger, Hertrich, and Steidl \cite{AHS23}. These works are complementary to the coercivity and long-time questions considered here.

The present paper is also closely related to three companion manuscripts in preparation, concerning energy-kernel MMD dynamics; unconfined, target-free sub-Coulomb Riesz flows; and static Riesz-MMD quantization. Because these projects overlap with parts of the present analysis, we explain the division of scope after stating our main results.

\subsection{Our contributions}

This work makes four main contributions. They concern the well-posedness and regularity of the flow \eqref{eq:PDE}, PL coercivity and long-time behavior, obstructions on the unrestricted whole-space class, and, finally, rigidity of Lagrangian critical points together with its application to planar convergence. We summarize them as follows.

\begin{enumerate}
    \item \textbf{The Cauchy problem.} Assuming $\mu\in L^\infty(\Omega)$, we establish the existence of global weak solutions starting from arbitrary Borel probability measures $\rho_0\in\P(\Omega)$. We prove that the flow is instantly regularizing in $L^\infty$: for $t>0$, the density $\rho_t$ is bounded by an estimate independent of the initial data. This ultracontractivity is a direct consequence of the Coulomb repulsion and was already known in the zero-target case $\mu\equiv0$. Concerning the regularity of solutions to \eqref{eq:PDE}, we show that the H\"older seminorm $[\rho_t]_{C^{0,\beta}}$ can increase at an exponential rate, even starting from smooth initial data and targets.

    \item \textbf{PL coercivity and long-time behavior on $\T^\d$.} We prove a global-in-the-source metric PL inequality for every source of finite Coulomb energy, for the uniform target and, in dimensions $\d\ge2$, for bounded targets satisfying $\operatorname*{ess\,sup}\mu<\frac{\d}{\d-1}\operatorname*{ess\,inf}\mu$; in dimension one, the inequality holds for every bounded uniformly positive target. We recover the integral-dissipation form whenever the source potential has a $C^1$ representative, in particular for bounded sources. Thus the result applies to targets sufficiently close to the uniform density in $L^\infty$, without any lower bound on the source. For arbitrary uniformly positive targets, we derive exponential convergence rates for $\mmd^2(\rho_t,\mu)$ from a defective PL inequality which remains valid in the presence of vacuum in the evolving density. Conversely, we show that a global PL inequality may fail when the target vanishes at only one point and, in dimensions at least two, that, for each prescribed lower bound $\lambda\in(0,1)$, no PL constant depending only on $(\d,\lambda)$ can hold uniformly over all targets satisfying $\mu\ge\lambda$.

    \item \textbf{Radial coercivity and whole-space obstructions on $\R^\d$.} For $\d\ge2$, if the source and target are radial, the target has connected support and is bounded from below on that support, and the source is supported in the same set, then we prove a PL inequality and exponential convergence. On the unrestricted whole-space class, if $\mu$ is compactly supported and bounded, then we prove that a uniformly localized source initially separated from $\supp\mu$ by distance $D$ retains a fixed fraction of its initial squared MMD for times proportional to $D$. This bounded-speed persistence rules out both a multiplicative squared-MMD decay modulus uniform over the initial datum and a global PL inequality. We also give an independent static construction of smooth densities for which the dissipation-to-energy ratio tends to zero.

    \item \textbf{Lagrangian criticality and planar convergence.} On $\R^\d$ and $\T^\d$, in every dimension, we prove that a Lagrangian critical point of the Coulomb discrepancy must equal the target whenever $(\rho-\mu)^+$ is absolutely continuous. In dimension two, we show that finite Coulomb energy supplies uniform-in-time tightness through a logarithmic-capacity estimate. We combine this estimate with the dissipation inequality, local compactness of the Coulomb fields, and rigidity of Lagrangian critical points to prove convergence of every solution constructed here whose Coulomb energy is finite at some positive time to the target narrowly, weak-$*$ in $L^\infty$, and strongly in $H^{-\alpha}(\R^2)$ for every $\alpha>0$. These conclusions do not include convergence in the Coulomb MMD, which remains an open question.
\end{enumerate}

We consider the following notion of solution to \eqref{eq:PDE}.

\begin{mydef}\label{def:weak-solution}
	Let $\mu \in \mathcal{P}(\Omega)\cap L^\infty(\Omega)$ and $\rho_0 \in \mathcal{P}(\Omega)$. We say that a curve $\rho:[0,\infty)\to\mathcal{P}(\Omega)$ is a \emph{global weak solution} of
\begin{equation}
\partial_t\rho_t+\div(\rho_t v_t)=0,
\qquad
v_t =-\nabla\g\ast(\rho_t-\mu),
\end{equation}
if, for every $T>0$, the following conditions hold:
\begin{align}
&\rho\in L^q([0,T];L^p(\Omega)) \text{ for some } p,q\in(1,\infty) \text{ such that } 1/p + 1/q >1,\\
&\rho_t\in L^\infty(\Omega) \text{ for a.e. } t\in(0,T), \\
&\int_0^T\int_{\Omega}\bigl(\partial_t\varphi(t,x)+v_t(x)\cdot\nabla_x\varphi(t,x)\bigr)\,d\rho_t(x)\,dt
+\int_{\Omega}\varphi(0,x)\,d\rho_0(x)=0\label{eq:weak-eulerian}
\end{align}
for every $\varphi\in C_c^{\infty}([0,T)\times \Omega)$. If $\rho_0\in L^\infty(\Omega)$, we require $\rho\in L^\infty_{\mathrm{loc}}([0,\infty); L^\infty(\Omega))$.
\end{mydef}

\begin{thm}[The Cauchy problem]\label{thm:Cauchy}
    Let $\d\ge 1$ and $\Omega \in \{\R^\d, \T^\d\}$. For any initial measure $\rho_0 \in \P(\Omega)$ and target $\mu \in \P(\Omega) \cap L^\infty(\Omega)$, there exists a global weak solution $\rho$ to \eqref{eq:PDE} satisfying for all $1\le p\le\infty$ and $t>0$,
    \begin{equation}
        \|\rho(t)\|_{L^p} \le\left(  \frac{1-e^{-t \|\mu\|_{L^\infty}}}{\|\mu\|_{L^\infty}} \right)^{-\frac{p-1}{p}}.
    \end{equation}

    If $\rho_0\in L^\infty(\Omega)$, the solution $\rho$ is unique in the class $L^\infty_{\mathrm{loc}}([0,\infty); L^\infty(\Omega))$.

    Finally, there exist smooth and compactly supported radial probability densities $\rho_0$ and $\mu$ on $\R^\d$ such that the unique solution $\rho$ to \eqref{eq:PDE} with initial data $\rho_0$ and target $\mu$ satisfies, with $\lambda:=\mu(0)>0$, for every $\be\in (0,1]$,
    \begin{equation}
        [\rho_t]_{C^{0,\beta}} \ge c_\be \exp\left(\beta\lambda\frac{\d+2}{2\d}t\right),
        \qquad \forall t\ge t_0,
    \end{equation}
    for some $t_0<\infty$ depending on $\d$ and the data $\mu,\rho_0$, and some $c_\beta>0$ depending additionally on $\beta$.
\end{thm}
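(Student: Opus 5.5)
Our plan has three parts: an a priori ultracontractive estimate that survives regularization, followed by compactness; a Lagrangian stability argument for uniqueness; and an explicit radial computation for the H\"older growth.

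\textbf{Part 1: the $L^\infty$ estimate.} Along characteristics $\dot X_t=v_t(X_t)$ we have $\div v_t = -\Delta\g\ast(\rho_t-\mu)$. This equals $\rho_t-\mu$ on $\R^\d$, and also on $\T^\d$ because both are probability measures, so the constant terms cancel. Hence the density along a trajectory satisfies
\begin{equation*}
\frac{d}{dt}\rho_t(X_t) = -\rho_t(X_t)\bigl(\rho_t(X_t)-\mu(X_t)\bigr) \le -\rho_t(X_t)^2 + \|\mu\|_{L^\infty}\rho_t(X_t).
\end{equation*}
This is a Riccati/logistic comparison. Starting from $+\infty$, it gives
\begin{equation*}
\rho_t \le \frac{M}{1-e^{-Mt}}, \qquad M=\|\mu\|_{L^\infty},
\end{equation*}
which is exactly the $p=\infty$ bound. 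The $L^p$ bounds then follow by interpolation with $\|\rho_t\|_{L^1}=1$, via $\|\rho\|_{L^p}\le \|\rho\|_{L^\infty}^{(p-1)/p}$.

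To make this rigorous:
\begin{enumerate}
\item Mollify $\rho_0$ and $\mu$. On $\R^\d$, also truncate or approximate so that the smooth theory applies; for smooth bounded data the classical theory (e.g.\ \cite{BV25}, or a direct Lagrangian fixed point with Lipschitz velocity) gives global smooth solutions. The bound above holds for these uniformly in the regularization.
\item For $t\ge\tau>0$, the uniform $L^\infty$ bound makes $\nabla\g\ast\rho_t$ log-Lipschitz and locally bounded.
\item Obtain tightness and equicontinuity in time: in $W_1$ on $[\tau,T]$, and near $t=0$ in a weak dual norm using $|v|\lesssim$ a Riesz potential. The $L^p$ bound with $p$ near $1$ integrates near $t=0$, since $\|\rho_t\|_{L^p}\sim t^{-(p-1)/p}$; this produces the required $L^q_tL^p_x$ with $1/p+1/q>1$.
\item Pass to the limit in the nonlinear term $\rho_t\nabla\g\ast(\rho_t-\mu)$. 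On $[\tau,T]$, weak-$*$ convergence of $\rho$ combined with local strong compactness of $\nabla\g\ast\rho$ suffices, by Aubin--Lions in space (gradient of a Newtonian potential of a bounded density is $W^{1,p}_{loc}$) and continuity in time. Near $t=0$, we use the integrable bound, or alternatively Delort-type symmetrization of $\int\nabla\g(x-y)\cdot(\nabla\varphi(x)-\nabla\varphi(y))\,d\rho d\rho$, which is bounded since the kernel is of order $|x-y|^{1-\d}\cdot|x-y|$.
\end{enumerate}

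\textbf{Part 2: uniqueness.} For $\rho_0\in L^\infty$, we follow Loeper's argument. Take two solutions bounded in $L^\infty$ on $[0,T]$ and compare their flows. The bound $\|\nabla\g\ast(\rho^1-\rho^2)\|_{L^2}\lesssim \max\|\rho^i\|_\infty^{1/2} W_2(\rho^1,\rho^2)$ and the log-Lipschitz estimate for $\nabla\g\ast\rho$ together give
\begin{equation*}
\frac{d}{dt}Q \lesssim Q\log(1/Q), \qquad Q=\int|X^1-X^2|^2\,d\rho_0,
\end{equation*}
and Osgood's lemma gives $Q\equiv0$. The $\mu$-term cancels, since it is the same smooth-enough field $\nabla\g\ast\mu$ (log-Lipschitz) for both solutions.

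\textbf{Part 3: the Hölder growth example.} We build a radial example. For radial data, the mass function $m(t,r)=\rho_t(B_r)$ solves a local first-order equation, with radial velocity
\begin{equation*}
v=-\frac{m-\mu(B_r)}{|S^{\d-1}|\,r^{\d-1}}
\end{equation*}
(pointing outward if excess). We choose $\mu$ with $\mu\equiv\lambda$ near $0$ and $\rho_0$ with $\rho_0\equiv\rho_*>\lambda$ near $0$, or a suitable profile. Near the origin the density solves $\dot\rho=-\rho(\rho-\lambda)$ along characteristics, so $\rho\to\lambda$; the characteristics themselves move like $\dot r = -\frac{(\rho-\lambda)}{\d}r$.

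The mechanism for growth is to arrange a region near $0$ where $\rho<\lambda$, e.g.\ $\rho_0$ vanishing or small in an annulus. In such a region $\dot r=\frac{\lambda-\rho}{\d}r\approx \frac{\lambda}{\d}r$ (in vacuum-adjacent zones). Meanwhile the density relaxes from two different initial values on neighbouring characteristics toward $\lambda$ at rate $\lambda$. A jump or gradient is then compressed into a shrinking spatial layer while the density difference decays. The balance of the compression rate against the relaxation rate should give the exponent $\lambda\frac{\d+2}{2\d}$: the gradient scales like (difference)$\times$(inverse width). We would compute this explicitly via the ODE for $\p_r\rho$ along characteristics, obtained by differentiating the Riccati equation using $\p_r\mu=0$ near $0$. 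The Hölder seminorm is then at least (difference)/(width)$^\beta$, giving $c_\beta e^{\beta\kappa t}$.

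\textbf{Main obstacle.} Getting the precise rate $\frac{\d+2}{2\d}$ requires a careful linearization near the origin. We expect this step, together with the near-$t=0$ passage to the limit for measure initial data, to be the hardest part.
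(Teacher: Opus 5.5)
\paragraph{Parts 1 and 2.} These are sound, and they differ from the paper only in route. Your Riccati comparison along characteristics of the smooth approximations, followed by interpolation with unit mass, gives exactly the stated bounds. The paper instead differentiates $\int\rho^p$, closes an ODE for $\|\rho\|_{L^p}^{-p/(p-1)}$ via Jensen, and lets $p\to\infty$. Your compactness outline matches the paper's. Loeper's flow comparison via the $\dot H^{-1}$--$W_2$ estimate is a valid substitute for the paper's modulated-energy commutator argument. It does require you to note first that bounded weak solutions are Lagrangian, which follows from Osgood uniqueness for the linear continuity equation.

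\paragraph{The gap is in Part 3.} You never fix the data, and the mechanism you sketch does not produce growth. Your starting point, transport of the enclosed mass, is right, but the signs are reversed. Gauss's law gives $u=(m_\rho-m_\mu)/(\sigma_\d r^{\d-1})$, which points outward for excess. So near the origin $\dot r=\frac{\rho-\lambda}{\d}r$, and deficit regions are pulled inward, not pushed outward.

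More seriously, two positive densities relaxing toward $\lambda$ are not squeezed together. While the shells remain where $\mu\equiv\lambda$, the variable $U_t(a):=R_t(a)^\d$ (with $a$ the enclosed mass) solves the \emph{linear} ODE $\dot U_t(a)=a/\omega_\d-\lambda U_t(a)$. Hence
\begin{equation*}
\partial_aU_t(a)=\frac{1-e^{-\lambda t}}{\omega_\d\lambda}+e^{-\lambda t}\,\partial_aU_0(a),
\qquad
\rho\bigl(t,R_t(a)\bigr)=\frac{1}{\omega_\d\,\partial_aU_t(a)}.
\end{equation*}
So the volume between two mass labels $a<b$ converges to $(b-a)/\lambda>0$, while the density difference decays like $e^{-\lambda t}$. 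Across any layer carrying positive mass, the gradient therefore decays. In particular, $\rho_0\equiv\rho_*$ near $0$ produces nothing, since the solution stays spatially constant there. Merely small density does not help either: it too is eventually relaxed.

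\paragraph{The missing idea.} You need a zero of $\rho$ that persists exactly. The origin is a fixed characteristic and $\rho=0$ is an equilibrium of $\dot\rho=-\rho(\rho-\mu)$, so if $\rho_0(0)=0$ then $\rho(t,0)=0$ for all $t$. Meanwhile nearby shells fill up to density of order $\lambda$.

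The paper takes $\mu\equiv\lambda$ and $\rho_0=\alpha r^2$ near $0$, so that $U_0(a)=\Gamma a^{\d/(\d+2)}$. The formula above then gives $\rho(t,R_t(a_t))\ge\lambda/2$ for $a_t\asymp e^{-\lambda\frac{\d+2}{2}t}$, at radius $r_t\asymp e^{-\lambda\frac{\d+2}{2\d}t}$. Consequently $[\rho_t]_{C^{0,\beta}}\ge\frac{\lambda}{2}r_t^{-\beta}$.

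The exponent $\frac{\d+2}{2\d}$ is the signature of quadratic vanishing at the origin; the same computation with vanishing of order $k$ gives $\frac{\d+k}{k\d}$. It does not come from a balance between compression and relaxation.

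Finally, a linearized ODE for $\partial_r\rho$ only describes the region where $\rho\ll\lambda$. Showing that the density actually reaches order one at radius $r_t$ requires the exact nonlinear solution, which the linear mass-coordinate ODE provides.
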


We next turn from the Cauchy theory to coercivity and quantitative convergence on the torus. Our first result is an intrinsic metric PL inequality under a target density-ratio condition, valid for every source of finite Coulomb energy. The integral-dissipation form follows whenever the Coulomb potential has a $C^1$ representative; in particular, it applies to bounded sources. The uniform target is included as a special case with constant $2/\d$.

\begin{thm}[Metric PL inequality under a target density-ratio condition]\label{thm:bounded-contrast-PL}\label{thm:uniform-target-PL}
Let $\d\ge1$ and let $\mu\in\P(\T^\d)\cap L^\infty(\T^\d)$. Set
\begin{equation}
m:=\operatorname*{ess\,inf}_{\T^\d}\mu,
\qquad
M:=\operatorname*{ess\,sup}_{\T^\d}\mu.
\end{equation}
Regard $F_\mu(\eta):=\mmd^2(\eta,\mu)$ as extended by $+\infty$ outside its finite-energy domain, and let
\begin{equation}
|\partial F_\mu|(\rho)
:=\limsup_{\eta\to\rho\text{ in }W_2}
\frac{\bigl(F_\mu(\rho)-F_\mu(\eta)\bigr)_+}{W_2(\rho,\eta)}
\end{equation}
denote its descending Wasserstein local slope.\footnote{See \cite[Definition~1.2.4, equation~(1.2.6)]{AGS08}.} Then every $\rho\in\P(\T^\d)$ with $F_\mu(\rho)<\infty$ satisfies
\begin{equation}\label{eq:bounded-contrast-slope-PL}
|\partial F_\mu|^2(\rho)
\ge2\left(m-\frac{\d-1}{\d}M\right)F_\mu(\rho).
\end{equation}
For $\d=1$, the coefficient is positive whenever $m>0$. For $\d\ge2$, it is positive whenever $m>0$ and
\begin{equation}\label{eq:bounded-contrast-condition}
\frac{M}{m}<\frac{\d}{\d-1}.
\end{equation}
\end{thm}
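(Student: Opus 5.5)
The plan has two parts. First, reduce the metric statement \eqref{eq:bounded-contrast-slope-PL} to an integral inequality for regular sources. Second, prove that integral inequality by a Bochner-type stress-tensor computation, using the pointwise constraint $\rho\ge0$ in the form $\Delta h\le\mu\le M$, where $h:=\g\ast(\rho-\mu)$.

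\emph{Reduction.} For $\rho$ with a bounded (or at least $C^1$-potential) density, I expect the local slope to satisfy
\begin{equation*}
|\partial F_\mu|^2(\rho)=\int_{\T^\d}|\nabla h|^2\,d\rho .
\end{equation*}
The lower bound $\ge$ is what is needed. To get it, test the slope along the curve $\eta_s=(\mathrm{id}-s\nabla h)_\#\rho$. Expanding $F_\mu(\eta_s)$ to first order in $s$ gives $F_\mu(\rho)-F_\mu(\eta_s)=s\int|\nabla h|^2d\rho+o(s)$, while $W_2(\rho,\eta_s)\le s\|\nabla h\|_{L^2(\rho)}$.

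For a general finite-energy $\rho$, I would argue by approximation. Take $\rho^\varepsilon$ to be the heat-flow regularization of $\rho$ at time $\varepsilon$, for which $F_\mu(\rho^\varepsilon)\to F_\mu(\rho)$. Then use lower semicontinuity of the slope. Because $F_\mu$ is not geodesically convex, this lower semicontinuity is not automatic. If it fails, I would instead use the representation of the slope via the minimizing-movement/JKO scheme, together with the integral inequality below applied to the JKO minimizers. Those minimizers are bounded, by the ultracontractive estimates of \cref{thm:Cauchy}. This step is technical but standard.

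\emph{Core inequality.} Write $f:=|\nabla h|^2$ and $\rho=\mu-\Delta h$, so that $\int f\,d\rho=\int\mu f-\int f\Delta h$ and $\int f\,dx=2F_\mu(\rho)$. It therefore suffices to show
\begin{equation*}
-\int_{\T^\d} f\,\Delta h\,dx\;\ge\;-\frac{\d-1}{\d}M\int_{\T^\d}f\,dx .
\end{equation*}
Indeed, adding $\int\mu f\ge m\int f$ to this gives $\int f\,d\rho\ge 2\bigl(m-\tfrac{\d-1}{\d}M\bigr)F_\mu(\rho)$.

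The available tools are the two identities coming from the stress-energy tensor $T=\nabla h\otimes\nabla h-\tfrac12|\nabla h|^2I$, namely $\div T=\Delta h\,\nabla h$, and integration by parts, $\int f\Delta h=-2\int D^2h\,\nabla h\cdot\nabla h$. The key is to combine them with the one-sided bound $\Delta h\le M$ and a trace inequality. Split $D^2h=\tfrac{\Delta h}{\d}I+\mathring{D}^2h$ into trace and traceless parts. The trace part contributes $\tfrac1\d\int f\Delta h$, which is controlled by $\tfrac M\d\int f$ because $\Delta h\le M$. The traceless part should be handled through the Bochner identity $\tfrac12\Delta f=|D^2h|^2+\nabla h\cdot\nabla\Delta h$, integrated against suitable weights, which gives a sign. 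The factor $\frac{\d-1}{\d}$ is exactly $1-\frac1\d$, the complement of the trace fraction, so this decomposition is where I expect the constant to come from.

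\emph{Main obstacle.} The main difficulty is the traceless part of this cubic term. I would first try to show
\begin{equation*}
\int_{\T^\d}\mathring{D}^2h\,\nabla h\cdot\nabla h\;\ge\;-\frac{\d-1}{2\d}\int_{\T^\d}(M-\Delta h)\,f ,
\end{equation*}
using the positivity of $\sigma:=M-\Delta h=\rho+(M-\mu)\ge0$. If this does not close directly, the fallback is to reformulate the problem as minimizing $\int\sigma f$ subject to $\Delta h=M-\sigma$ with $\sigma\ge0$. One would then check, via a Pohozaev/Rellich-type identity on the torus, that the extremal configurations are the one-dimensional ones, for which the constant is saturated.
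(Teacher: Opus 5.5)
\emph{There is a genuine gap: the core inequality is not proved, and the intermediate estimate you propose for it is false.}

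Your reduction is the right one. With $h=\g\ast(\rho-\mu)$ and $f=|\nabla h|^2$, everything hinges on $\int_{\T^\d}f\,\Delta h\,dx\le\frac{\d-1}{\d}M\int_{\T^\d}f\,dx$ whenever $\Delta h=\mu-\rho\le M$. This is exactly the paper's \eqref{eq:upper-laplacian-scalar}, but the route you sketch to it fails.

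Since $\int f\Delta h=-2\int D^2h\nabla h\cdot\nabla h$, the trace part contributes $-\frac2\d\int f\Delta h$, not $\frac1\d\int f\Delta h$. Your two identities therefore only yield $\frac{\d+2}{\d}\int f\Delta h=-2\int\mathring{D}^2h\nabla h\cdot\nabla h$. The constraint $\Delta h\le M$ has not entered, and everything rests on your bound $\int\mathring{D}^2h\nabla h\cdot\nabla h\ge-\frac{\d-1}{2\d}\int(M-\Delta h)f$.

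That bound fails for $\d\ge2$. Take $\mu\equiv1$, so $M=1$ and $M-\Delta h=\rho$. Let $\rho=0$ on a small ball $B_R$, $\rho=c_\delta$ on the shell $R<|x|<R+\delta$ with $c_\delta|B_{R+\delta}\setminus B_R|=|B_{R+\delta}|$, and $\rho=1$ elsewhere.
\begin{itemize}
\item On $B_R$ one has $\nabla h=x/\d$ and $\mathring{D}^2h=0$; outside $B_{R+\delta}$ one has $\nabla h=0$.
\item On the shell, $\mathring{D}^2h\nabla h\cdot\nabla h=\frac{\d-1}{\d}(h''-h'/r)h'^2$ with $h''=1-c_\delta-\frac{\d-1}{r}h'$.
\item Hence $\int\mathring{D}^2h\nabla h\cdot\nabla h=-\frac{\d-1}{\d}\int\rho f+O(\delta)$, while $\int\rho f\to|B_R|R^2/(3\d^2)>0$.
\end{itemize}
Equivalently, your bound would give the constant $\frac{\d-1}{2\d+1}$, whereas this example has $\int f\Delta h/\int f\to\frac{2(\d-1)}{3\d}$.

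The fallback also rests on a wrong guess. A one-dimensional profile $h=h(x_1)$ has $\int f\Delta h=\int h_{11}h_1^2\,dx=0$. Such profiles are the most favorable configurations, not extremal ones.

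The missing idea is to use the trace inequality pointwise at a single point rather than in integrated form; integrated, $\int|D^2h|^2=\int(\Delta h)^2$ carries no information about the one-sided constraint. The paper's argument runs as follows.
\begin{itemize}
\item After rescaling to $\mu\equiv1$, suppose a smooth positive $\rho^\circ$ has $\int(1-\rho^\circ)|\nabla h_{\rho^\circ}|^2>\frac{\d-1}{\d}\int|\nabla h_{\rho^\circ}|^2$, and fix $\ell$ strictly between the two ratios.
\item Maximize $\int(1-\rho)|\nabla h_\rho|^2-\ell\int|\nabla h_\rho|^2-\varepsilon\int\Phi(\rho)$, where the barrier $\Phi$ penalizes both concentration and vacuum, and show the maximizer $\bar\rho$ is smooth and positive.
\item Its Euler--Lagrange equation is $2(|D^2h|^2-(\Delta h)^2+\ell\Delta h)+\varepsilon\Delta\Phi'(\bar\rho)=0$. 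Evaluate it at a minimum point $x_0$ of $\bar\rho$, where $\tau:=\Delta h(x_0)=1-\bar\rho(x_0)\in(0,1)$ and $\Delta\Phi'(\bar\rho)(x_0)\ge0$.
\item Then $|D^2h(x_0)|^2\ge\tau^2/\d$ gives $\tau(\ell-\frac{\d-1}{\d}\tau)\le0$, a contradiction. Heat regularization extends the estimate to bounded sources.
\end{itemize}
Your sense that the constant comes from $1-\frac1\d$ is right; only the mechanism differs.

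The passage to general finite-energy $\rho$ is also left open. Lower semicontinuity of the slope is not available, and \cref{thm:Cauchy} gives $L^\infty$ bounds for the constructed flow, not for JKO minimizers. Even your bounded-source test curve $(\mathrm{id}-s\nabla h)_\#\rho$ is delicate, because $\nabla h$ is only log-Lipschitz. The paper never identifies the slope; it runs the constructed flow from $\rho$ itself. The bounded-source inequality at positive times, the energy inequality from $t=0$ on the torus, and continuity of $F_\mu$ at $t=0$ give $F_\mu(\rho_t)\le e^{-2\kappa t}F_\mu(\rho)$ with $\kappa=m-\frac{\d-1}{\d}M$. The action bound $W_2^2(\rho,\rho_t)\le t\bigl(F_\mu(\rho)-F_\mu(\rho_t)\bigr)$ then makes the difference quotient along the flow at least $\bigl((1-e^{-2\kappa t})F_\mu(\rho)/t\bigr)^{1/2}\to(2\kappa F_\mu(\rho))^{1/2}$.
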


\begin{cor}[Integral PL inequality under a target density-ratio condition]\label{cor:bounded-contrast-integral-PL}
Under the assumptions of \cref{thm:bounded-contrast-PL}, let $\rho\in\P(\T^\d)$ have finite Coulomb energy, set $h:=\g\ast(\rho-\mu)$, and suppose that $h$ has a $C^1$ representative. Then
\begin{equation}\label{eq:bounded-contrast-PL}
\dissip(\rho\mid\mu)
\ge |\partial F_\mu|^2(\rho)
\ge2\left(m-\frac{\d-1}{\d}M\right)\mmd^2(\rho,\mu).
\end{equation}
The regularity assumption on $h$ holds, in particular, if $\rho\in L^p(\T^\d)$ for some $p>\d$.

In particular, if $\mu\equiv1$, then $m=M=1$ and
\begin{equation}\label{eq:uniform-target-PL}
\dissip(\rho\mid1)
\ge |\partial F_1|^2(\rho)
\ge \frac{2}{\d}\mmd^2(\rho,1).
\end{equation}
In particular, if $\|\mu-1\|_{L^\infty}\le\varepsilon$, then
\begin{equation}\label{eq:near-uniform-PL}
\dissip(\rho\mid\mu)
\ge\frac{2}{\d}\bigl(1-(2\d-1)\varepsilon\bigr)
\mmd^2(\rho,\mu),
\end{equation}
which is coercive for $\varepsilon<(2\d-1)^{-1}$.
\end{cor}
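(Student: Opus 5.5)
Assuming \cref{thm:bounded-contrast-PL}, the corollary needs two things: the inequality $\dissip(\rho\mid\mu)\ge|\partial F_\mu|^2(\rho)$ when $h=\g\ast(\rho-\mu)$ has a $C^1$ representative, and the two specializations. The plan is to bound the slope from above by the dissipation using an explicit first-order expansion of $F_\mu$ along arbitrary nearby measures.

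Fix $\eta$ with $F_\mu(\eta)<\infty$ and $W_2(\rho,\eta)\to0$, and let $\gamma$ be an optimal plan. Since $F_\mu$ is a quadratic form,
\begin{equation}
F_\mu(\rho)-F_\mu(\eta)=\int h\,d(\rho-\eta)-\mmd^2(\eta,\rho)\le\int\bigl(h(x)-h(y)\bigr)\,d\gamma(x,y).
\end{equation}
Here the quadratic term $\mmd^2(\eta,\rho)=\tfrac12\|\eta-\rho\|_{\dot H^{-1}}^2\ge0$ is simply discarded, which works in our favor. Write $h(x)-h(y)=\int_0^1\nabla h(x+s(y-x))\cdot(x-y)\,ds$ on the torus, using the geodesic representative of $y-x$. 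Cauchy--Schwarz then gives
\begin{equation}
\int\bigl(h(x)-h(y)\bigr)\,d\gamma\le W_2(\rho,\eta)\Bigl(\int\int_0^1|\nabla h(x+s(y-x))|^2\,ds\,d\gamma\Bigr)^{1/2}.
\end{equation}
As $W_2\to0$, the map $(x,y)\mapsto x+s(y-x)$ converges to $x$ in $L^2(\gamma)$, and $|\nabla h|^2$ is bounded and uniformly continuous on compact $\T^\d$. The inner integral therefore tends to $\int|\nabla h|^2d\rho=\dissip(\rho\mid\mu)$. Dividing by $W_2$ and taking the limsup gives $|\partial F_\mu|(\rho)\le\dissip^{1/2}$. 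Combining this with \cref{thm:bounded-contrast-PL} yields \eqref{eq:bounded-contrast-PL}.

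For the regularity claim, take $\rho\in L^p$ with $p>\d$. Since $\mu\in L^\infty$, we have $\rho-\mu\in L^p$ with zero mean. Elliptic regularity on the torus ($h$ solves $-\D h=\rho-\mu$, so $h\in W^{2,p}$ by Calder\'on--Zygmund) and Morrey's embedding $W^{2,p}\hookrightarrow C^{1,1-\d/p}$ then give a $C^1$ representative. One could also argue directly: $\nabla\g\in L^{q}$ for every $q<\d/(\d-1)$, so the convolution $\nabla\g\ast(\rho-\mu)$ is continuous by H\"older, and is in fact H\"older continuous.

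The specializations are arithmetic. If $\mu\equiv1$, the coefficient is $2(1-\frac{\d-1}{\d})=\frac2\d$. If $\|\mu-1\|_\infty\le\varepsilon$, then $m\ge1-\varepsilon$ and $M\le1+\varepsilon$. Hence
\begin{equation}
m-\tfrac{\d-1}{\d}M\ge\tfrac1\d-\varepsilon\bigl(1+\tfrac{\d-1}{\d}\bigr)=\tfrac1\d\bigl(1-(2\d-1)\varepsilon\bigr),
\end{equation}
which gives \eqref{eq:near-uniform-PL}, coercive for $\varepsilon<(2\d-1)^{-1}$.

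The main delicate step is the passage to the limit inside the slope bound. It needs uniform continuity of $\nabla h$ (which is where the $C^1$ hypothesis enters) and care with periodic geodesics when $x,y$ are near antipodal. This is harmless because such pairs carry $\gamma$-mass of order $W_2^2$, and $\nabla h$ is bounded.
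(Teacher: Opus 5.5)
Your proof is correct and follows the paper's argument: you use the same quadratic expansion of $F_\mu$ with the nonnegative $\dot H^{-1}$ term discarded, an optimal coupling, a first-order expansion of the $C^1$ potential, Cauchy--Schwarz, and then \cref{thm:bounded-contrast-PL}, and you get the regularity claim via $W^{2,p}\hookrightarrow C^1$ and the specializations by the same arithmetic. The only difference is cosmetic: the paper splits pairs at distance $\sqrt{W_2(\rho,\eta)}$ and uses a modulus-of-continuity remainder on near pairs and a Chebyshev bound on far ones, whereas you use the exact integral mean-value formula along a lift of $y-x$; that formula holds for any lift, so your antipodal-geodesic caveat is unnecessary.
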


\begin{remark}[Why the metric formulation is needed]\label{rem:singular-source-dissipation}
For a general finite-energy measure, $\nabla h$ is only a Lebesgue-almost-everywhere equivalence class, so its integral against a singular source need not be intrinsic. For example, take $\mu\equiv1$ and $\rho=\delta_0$ on $\T$ or, when $\d\ge2$, $\rho=\delta_0(dx_1)\otimes dx_2\cdots dx_\d$ on $\T^\d$. The Coulomb field has a jump across the point or hyperplane supporting $\rho$; changing its value there does not change the field as a Lebesgue equivalence class but does change $\int|\nabla h|^2\,d\rho$. The metric slope avoids this ambiguity.
\end{remark}

\begin{cor}[Convergence under the target density-ratio condition]\label{cor:bounded-contrast-convergence}
Let $\d\ge1$ and let $\mu\in\P(\T^\d)\cap L^\infty(\T^\d)$. With $m$ and $M$ as in \cref{thm:bounded-contrast-PL}, assume that
\begin{equation}
\kappa:=m-\frac{\d-1}{\d}M>0.
\end{equation}
Let $\rho$ be a solution constructed in \cref{thm:Cauchy} with target $\mu$ and arbitrary initial datum $\rho_0\in\P(\T^\d)$. Then, for every $t\ge s>0$,
\begin{equation}\label{eq:bounded-contrast-convergence}
\mmd^2(\rho_t,\mu)
\le e^{-2\kappa(t-s)}\mmd^2(\rho_s,\mu).
\end{equation}
\end{cor}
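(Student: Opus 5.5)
The plan is to combine the energy dissipation along the constructed solution with the integral PL inequality of \cref{cor:bounded-contrast-integral-PL}, which applies at positive times because the solution is bounded there, and then conclude with Gr\"onwall's lemma.

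\textbf{Step 1: regularity at positive times.} Fix $s>0$. By \cref{thm:Cauchy}, for every $\tau\ge s/2$ we have
\begin{equation}
\|\rho_\tau\|_{L^\infty}\le \frac{\|\mu\|_{L^\infty}}{1-e^{-s\|\mu\|_{L^\infty}/2}}=:C_s.
\end{equation}
Hence $\rho_\tau-\mu\in L^\infty(\T^\d)$ with mean zero. Elliptic regularity on the torus then gives that $h_\tau=\g\ast(\rho_\tau-\mu)$ lies in $W^{2,p}$ for all $p<\infty$, so $h_\tau$ is $C^{1,\alpha}$ with norms bounded uniformly in $\tau\ge s/2$. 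In particular the Coulomb energy is finite, and the hypothesis of \cref{cor:bounded-contrast-integral-PL} holds (take $p>\d$). This yields, for every $\tau\ge s$,
\begin{equation}
\dissip(\rho_\tau\mid\mu)\ge 2\kappa\,\mmd^2(\rho_\tau,\mu).
\end{equation}

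\textbf{Step 2: energy dissipation identity on $[s,\infty)$.} On $[s/2,\infty)$ the solution is bounded and the velocity $v_\tau=-\nabla h_\tau$ is Lipschitz-log / $C^{0,\alpha}$ uniformly. By the uniqueness statement of \cref{thm:Cauchy} for bounded data, the restarted solution from $\rho_{s/2}$ is the bounded-class solution. I will justify
\begin{equation}
\mmd^2(\rho_t,\mu)+\int_{s'}^t\dissip(\rho_\tau\mid\mu)\,d\tau=\mmd^2(\rho_{s'},\mu),\qquad s\le s'\le t
\end{equation}
(an inequality "$\le$" suffices). The route is to test the weak formulation \eqref{eq:weak-eulerian} with $\varphi=h_\tau$, suitably mollified in space and time. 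Since $\p_t h_\tau=\g\ast\p_t\rho_\tau$, this gives
\begin{equation}
\frac{d}{d\tau}\tfrac12\int h_\tau\,d(\rho_\tau-\mu)=\int \p_t\rho_\tau\, h_\tau=-\int\rho_\tau|\nabla h_\tau|^2 .
\end{equation}
The commutator terms from mollification vanish in the limit because $\rho_\tau\in L^\infty$ and $\nabla h_\tau$ is continuous, uniformly in time. The map $\tau\mapsto\rho_\tau$ is weakly continuous, so $\tau\mapsto\mmd^2(\rho_\tau,\mu)$ is absolutely continuous on $[s,\infty)$. This step is the main technical obstacle: making the chain rule rigorous for the weak solution. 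If testing is delicate, I would instead prove the energy inequality at the level of the approximation scheme used in \cref{thm:Cauchy} (for example, smooth data or regularized kernels) and pass to the limit. In that case, lower semicontinuity of $\int\rho|\nabla h|^2$ under the strong convergence of $\nabla h^\epsilon$ (from uniform $L^\infty$ bounds) and weak-$*$ convergence of $\rho^\epsilon$ gives the inequality version.

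\textbf{Step 3: Gr\"onwall.} Combining Steps 1 and 2 gives, for a.e. $\tau\ge s$,
\begin{equation}
\frac{d}{d\tau}\mmd^2(\rho_\tau,\mu)\le-\dissip(\rho_\tau\mid\mu)\le-2\kappa\,\mmd^2(\rho_\tau,\mu).
\end{equation}
If only the integrated inequality from Step 2 is available, I apply the integral form of Gr\"onwall's lemma to $E(\tau)=\mmd^2(\rho_\tau,\mu)$. Since $E(t)+2\kappa\int_{s'}^t E\le E(s')$ for all $s\le s'\le t$, this implies $E(t)\le e^{-2\kappa(t-s)}E(s)$, which is \eqref{eq:bounded-contrast-convergence}.
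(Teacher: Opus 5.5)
Your proposal is correct and follows essentially the paper's route. The positive-time $L^\infty$ bound from \cref{thm:Cauchy} makes the potential $C^1$, so \cref{cor:bounded-contrast-integral-PL} gives $\dissip(\rho_\tau\mid\mu)\ge 2\kappa\,\mmd^2(\rho_\tau,\mu)$, and this is combined with the energy inequality and the integral form of Gr\"onwall's lemma. The only difference is that your Step 2 re-derives the energy inequality, which the paper already establishes as \eqref{eq:weak-energy-inequality} for the constructed solutions on $\T^\d$ at every $s>0$, so you can simply cite it. If you do re-derive it, note that absolute continuity of $\tau\mapsto\mmd^2(\rho_\tau,\mu)$ comes from the $\dot H^{-1}$ chain rule, not from weak continuity of $\tau\mapsto\rho_\tau$.
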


For the uniform target, \cref{cor:bounded-contrast-convergence} gives the exponent $2/\d$. Its main content is the nondefective global static coercivity and the resulting multiplicative estimate; in dimensions $\d\ge4$, this exponent is not an improvement over the subendpoint asymptotic rates furnished by the following result for general uniformly positive targets.

\begin{thm}[Convergence]\label{thm2}
    Let $\d\ge 1$, $\rho_0\in\P(\T^\d)$, and $\mu\in\P(\T^\d)\cap L^\infty(\T^\d)$ satisfy
    \begin{equation}
        \lambda:=\operatorname*{ess\,inf}_{\T^\d}\mu>0.
    \end{equation}
    Let $\rho$ be any solution constructed in \cref{thm:Cauchy}. For every $\gamma\in(0,2\lambda/3)$ and every $s>0$, there is a constant
    $C_{s,\gamma}<\infty$ such that
    \begin{equation}\label{eq:convergence-subendpoint}
        \forall t\ge s, \quad \mmd^2(\rho_t,\mu)\le C_{s,\gamma}e^{-\gamma(t-s)}.
    \end{equation}
\end{thm}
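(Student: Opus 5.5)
Our plan is to prove a defective PL inequality: the dissipation controls the energy up to a defect that measures vacuum, i.e. the mass of $\mu$ sitting where $\rho$ is small. We then show that this defect is itself driven to zero by the dynamics.

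\textbf{Step 1 (time-independent upper bound).} By \cref{thm:Cauchy}, for $t\ge s$ we have $\|\rho_t\|_{L^\infty}\le K_s:=\|\mu\|_{L^\infty}/(1-e^{-s\|\mu\|_{L^\infty}})$, a bound uniform in $t\ge s$. Hence $h_t:=\g\ast(\rho_t-\mu)$ is $C^{1,\alpha}$ and the dissipation identity $\frac{d}{dt}\mmd^2=-\dissip(\rho_t\mid\mu)$ holds on $[s,\infty)$. We justify this through the approximation scheme of \cref{thm:Cauchy}, or at least as an inequality, which is all we need.

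\textbf{Step 2 (defective PL).} Fix a threshold $\theta\in(0,\lambda)$ and split $\T^\d$ into $\{\rho\ge\theta\}$ and $\{\rho<\theta\}$. On the first set, $\dissip\ge\theta\int_{\{\rho\ge\theta\}}|\nabla h|^2$. It remains to control $\int_{\{\rho<\theta\}}|\nabla h|^2$. On that set $-\Delta h=\rho-\mu\le\theta-\lambda<0$, so $h$ is strictly subharmonic there. Multiplying $-\Delta h=\rho-\mu$ by $(h-c)_-$ or $(h-c)_+$ and integrating by parts, we aim to bound $\int_{\{\rho<\theta\}}|\nabla h|^2$ by a constant times $\int_{\{\rho\ge\theta\}}|\nabla h|^2$, plus a defect $\mathcal{R}(\rho)$ involving $\mu(\{\rho<\theta\})$ and the oscillation of $h$.

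The resulting inequality should take the form
\[
\dissip(\rho\mid\mu)\ge c(\theta)\,\mmd^2(\rho,\mu)-C\,\mathcal{R}(\rho),
\]
with $c(\theta)\to2\lambda\cdot\tfrac13$ in the optimal splitting. The factor $2\lambda/3$ in the statement suggests the optimization is a Young-type balance between three contributions: the region $\rho\ge\theta$, the vacuum region, and the cross term. We expect it to come from choosing $\theta$ optimally, e.g. a quadratic optimization giving $\max_\theta\theta(\lambda-\theta)$-type expressions.

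\textbf{Step 3 (vacuum defect decays).} We show that the vacuum region is filled exponentially fast. Along characteristics, $\frac{d}{dt}\rho(X_t)=-\rho\,\div v=-\rho(\rho-\mu)\ge\rho(\lambda-\rho)$. So wherever $\rho>0$, the density is pushed up toward $\lambda$ logistically. Vacuum, however, can only be filled by transport. To handle this we use that $\mmd^2$ and $\dissip$ control mass displacement: the set $\{\rho<\theta\}$ has $\dot H^{-1}$-mass of $\rho-\mu$ of size at least $(\lambda-\theta)|\{\rho<\theta\}|$. We then show that $\mathcal{R}(\rho_t)\lesssim \mmd^2(\rho_t,\mu)^{a}$ for some $a\ge1$, or that $\mathcal{R}$ decays at rate $\lambda$ by the logistic ODE applied to a Lagrangian level-set measure.

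\textbf{Step 4 (Gr\"onwall).} Combining the previous steps gives $\frac{d}{dt}E\le-c E+C\mathcal{R}_t$ with $\mathcal{R}_t\lesssim e^{-\gamma' t}$. Gr\"onwall then yields \eqref{eq:convergence-subendpoint} for every $\gamma<\min(c,\gamma')$, which is the origin of the subendpoint rates $\gamma<2\lambda/3$.

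The main obstacle is Step 2 together with Step 3: bounding the field energy on the vacuum region in terms of quantities that decay, without any lower bound on $\rho_0$. To attack it, we would first try the logistic comparison for the pushed-forward quantity $\int(\theta-\rho_t)_+\,dx$ on $[s,\infty)$.
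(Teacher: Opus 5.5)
Your architecture (defective PL, decay of the defect, Gr\"onwall) is the one the paper uses. The step that carries the proof is missing, though: you have no mechanism that makes the defect decay.

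In Step 3 you conclude that vacuum ``can only be filled by transport'' and fall back on controlling it through $\mmd^2$ and $\dissip$, e.g.\ $\mathcal R\lesssim\mmd^{2a}$ with $a\ge1$. A static comparison of this kind cannot drive the rate. Take $\mu\equiv1$, and let $\rho=0$ on a layer of width $w$, with flanking layers of total width $w$ on which $\rho=2$. Then $\|(\theta-\rho)_+\|_{L^1}=\theta w$, but $\mmd^2(\rho,\mu)=w^3/12$. Even a bound with $a=1$ would close Gr\"onwall only if its constant beat the PL constant.

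The logistic law $\frac{d}{dt}\rho(X_t)=\rho(\mu-\rho)$ does not help either, because it keeps $\rho=0$ at vacuum points. What you are missing is that the transport is itself \emph{compressive} on the low-density set: there $\div v=\rho-\mu\le\theta-\lambda<0$. This is exactly the sign you noticed in Step 2, but you used it only for subharmonicity. For $z:=(\theta-\rho)_+$ one has $(\p_t+v\cdot\nab)z=\rho(\rho-\mu)\le0$ on $\{z>0\}$, hence
\[
\frac{d}{dt}\int_{\T^\d}z\,dx=\int_{\T^\d}z\,(\rho-\mu)\,dx+\int_{\{z>0\}}\rho(\rho-\mu)\,dx\le-(\lambda-\theta)\int_{\T^\d}z\,dx .
\]
This gives $\|(\theta-\rho_t)_+\|_{L^1}\le\theta e^{-(\lambda-\theta)t}$. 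To make it rigorous, compute on the smooth approximations, whose mollified targets still satisfy $\mu_k\ge\lambda$, and pass to the limit by weak-$*$ lower semicontinuity. The decay comes from the first term: the Jacobian of the flow squeezes the Lebesgue measure of the low-density region. It does not come from the logistic term. Your ``Lagrangian level-set measure'' would have to use the same mechanism, and its rate is $\lambda-\theta$, not $\lambda$.

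Step 2, by contrast, is simpler than you plan, and its constant is not $2\lambda/3$. Set $F(t):=\mmd^2(\rho_t,\mu)$ and $K:=\sup_{t\ge s}\|\nab h_t\|_{L^\infty}^2<\infty$; the latter follows from your Step 1 by the near--far estimate. Splitting at $\theta$ gives directly
\[
\dissip(\rho_t\mid\mu)\ge\theta\int_{\T^\d}|\nab h_t|^2\,dx-\int_{\{\rho_t<\theta\}}(\theta-\rho_t)|\nab h_t|^2\,dx\ge2\theta F(t)-K\|(\theta-\rho_t)_+\|_{L^1}.
\]
No integration by parts against $(h-c)_\pm$ is needed, and no comparison of the field energy on $\{\rho<\theta\}$ with that on its complement. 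Inserting the decay above gives $F'\le-2\theta F+K\theta e^{-(\lambda-\theta)t}$, hence decay at every rate below $\min\{2\theta,\lambda-\theta\}$.

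The threshold $2\lambda/3$ is the balance point $\theta=\lambda/3$. There the PL constant, which increases with $\theta$, meets the decay rate of the defect, which decreases with $\theta$. It is not the outcome of an optimization inside the static inequality, and $\max_\theta\theta(\lambda-\theta)=\lambda^2/4$ would not produce it anyway.
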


\begin{remark}[Quantitative form]\label{rem:convergence-quantitative-form}
The proof of \cref{thm2} yields the following more precise estimate. Fix $s>0$ and $\varepsilon\in(0,1)$, and set
\begin{equation}
a_\varepsilon=2\varepsilon\lambda,
\qquad
b_\varepsilon=(1-\varepsilon)\lambda.
\end{equation}
Define
\begin{equation}
\Theta_\varepsilon(\tau):=
\begin{cases}
   \dfrac{e^{-b_\varepsilon\tau}-e^{-a_\varepsilon\tau}}
   {a_\varepsilon-b_\varepsilon}, & \varepsilon\ne\frac13,\\[2ex]
   \tau e^{-\frac{2\lambda}{3}\tau}, & \varepsilon=\frac13,
\end{cases}
\end{equation}
and
\begin{equation}
K_{\d,\mu}(s):=
\begin{cases}
   4\|\g'\|_{L^\infty(\T)}^2, & \d=1,\\[0.5ex]
   C_\d\!\left(1+2^{1/\d}\!\left[\|\mu\|_{L^\infty}
   \left((1-e^{-s\|\mu\|_{L^\infty}})^{-1}+1\right)\right]^{1-1/\d}\right)^2,
   & \d\ge2,
\end{cases}
\end{equation}
where, for $\d\ge2$, $C_\d>0$ denotes the dimension-only constant resulting from the squared near--far Coulomb-field estimate used below. Then, for every $t\ge s$,
\begin{equation}
\mmd^2(\rho_t,\mu)
\le e^{-a_\varepsilon(t-s)}\mmd^2(\rho_s,\mu)+\varepsilon\lambda K_{\d,\mu}(s)e^{-b_\varepsilon s}
\Theta_\varepsilon(t-s).\label{eq:convergence-exact}
\end{equation}
The estimate \eqref{eq:convergence-subendpoint} follows by choosing $\varepsilon$ so that
$\gamma<\min\{2\varepsilon\lambda,(1-\varepsilon)\lambda\}$.
\end{remark}

\begin{remark}
    Chizat, Colombo, Colombo, and Fern\'andez-Real \cite{CCCFR26} independently establish global exponential convergence at the Coulomb endpoint on the torus for uniformly positive targets, including initial densities with vacuum regions. Their argument recovers a positive lower bound along the flow through an exponential gap-filling mechanism and then applies the usual PL inequality. The proof of \cref{thm2} instead keeps the possible vacuum as an explicit defect term. A further distinction is that the solutions in \cref{thm:Cauchy} may start from arbitrary Borel probability measures and become bounded instantaneously.
\end{remark}

The positive results above are complemented by two distinct limitations of PL coercivity on $\T^\d$. First, \cref{prop:counterexvanishingmu} gives a fixed target for which no PL inequality holds globally over all admissible sources. Second, \cref{prop:counterexamplePLinfmu} shows that, for every $\d\ge2$ and $0<\lambda<1$, no PL constant depending only on $(\d,\lambda)$ can hold uniformly over all target densities satisfying $\mu\ge\lambda$. This is consistent with \cref{thm:bounded-contrast-PL}, whose constant depends on both the lower and upper bounds of the target.
\begin{prop}\label{prop:counterexvanishingmu}
    Fix $\d\ge1$. There exists a nonnegative Lipschitz probability density $\mu:\T^\d\to\R$ which vanishes only at one point and a family of probability densities $(\rho_\delta)_{\delta>0}$ such that
    \begin{equation}
        \frac{\mmd^2(\rho_\delta,\mu)}{\dissip(\rho_\delta\mid\mu)}\longrightarrow+\infty
        \qquad\text{as }\delta\downarrow0.
    \end{equation}
    In particular, for every $\varepsilon>0$ there exists a source density $\rho$ such that
    $\mmd^2(\rho,\mu)\ge\varepsilon^{-1}\dissip(\rho\mid\mu)$.
\end{prop}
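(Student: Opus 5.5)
We construct $\mu$ so that it vanishes at the origin in a controlled way, and take $\rho_\delta$ to be a bump concentrated near that point. The idea is that the dissipation only sees $|\nabla h|^2$ on $\supp\rho_\delta$, where $h_\delta:=\g\ast(\rho_\delta-\mu)$. If $\rho_\delta$ sits where $\mu$ is essentially zero, the field on $\supp\rho_\delta$ is only its self-repulsion plus a tiny contribution from $\mu$. We therefore want $\rho_\delta$ to have $\mmd^2$ of order one while having small field on its support. In other words, we want $\rho_\delta$ to be close, on its support, to an equilibrium of the constrained problem.

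\textbf{Choice of data.} Take $\mu(x)=c\,\min\{|x|,r_0\}$ near $0$, extended to a Lipschitz probability density on $\T^\d$ that is positive away from $0$. The first candidate source is $\rho_\delta=$ the uniform density on $B_\delta$, but its field there is of order $\delta$ and its dissipation is of order $\delta^2$ times mass $1$. Its $\mmd^2$ is of order $\delta^{2-\d}$ (or $\log$, or $\delta$ when $\d=1$), which only makes the ratio large when the self-energy dominates, and the energy also blows up. A better choice keeps $\mmd^2$ bounded below. Let $\rho_\delta$ have a small mass $\delta$ near the origin, and let the remaining mass $1-\delta$ exactly match $\mu$ away from a ball. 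Concretely, set $\rho_\delta=\mu$ outside $B_r$ and $\rho_\delta=\mu+\delta$-perturbation inside $B_r$, chosen so that $h$ is small on the support.

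\textbf{Cleanest plan.} Use the vanishing of $\mu$ through a scaling argument. Let $\nu$ be a fixed signed perturbation supported in $B_1$ with zero total mass. Set $\rho_\delta=\mu+\delta^{a}\nu(\cdot/\delta)$, requiring $\rho_\delta\ge0$; this forces the negative part of $\nu$ to be bounded by $\mu\sim\delta$ on $B_\delta$, so the amplitude is $\delta^{a}\le\delta$. Then $h_\delta=\delta^{a+2}(\g\ast\nu)(\cdot/\delta)$ in the whole-space approximation, since $\nu$ has zero mass and the torus correction is negligible. This gives
\[
\mmd^2(\rho_\delta,\mu)\approx\delta^{2a+2+\d},
\qquad
\dissip(\rho_\delta\mid\mu)=\int|\nabla h_\delta|^2\,d\rho_\delta\approx\delta^{2a+2}\int_{B_\delta}|\nabla\g\ast\nu|^2(\cdot/\delta)\,\rho_\delta .
\]
Since $\rho_\delta\lesssim\delta$ on $B_\delta$ (taking $a=1$), the dissipation near the origin is $\lesssim\delta^{2a+2}\cdot\delta\cdot\delta^{\d}$. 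Outside $B_\delta$ we have $\rho_\delta=\mu$, which is bounded, and $|\nabla h|\lesssim\delta^{a+1}(|x|/\delta)^{-\d}$ by the dipole decay from zero mass. If we additionally impose vanishing dipole moment, the decay improves to $(|x|/\delta)^{-\d-1}$ with weight $\mu\sim|x|$. The outer integral $\int_{|x|>\delta}\delta^{2a+2}(\delta/|x|)^{2\d+2}|x|\,dx$ then gives $\delta^{2a+3+\d}$. Hence the ratio is $\dissip/\mmd^2\lesssim\delta\to0$, as desired.

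\textbf{Main obstacle.} The hard part is bookkeeping the outer region. The step to verify carefully is the far-field decay of $\nabla\g\ast\nu$, which requires the vanishing moments chosen above. We must also check that $\int_{\R^\d}|\nabla\g\ast\nu|^2>0$, which holds since $\nu\neq0$. Finally, the torus correction must be handled: the $-1$ in $-\D\g$ is harmless because $\nu$ has zero mass, so $\g_{\T}\ast\nu=\g_{\R^\d}\ast\nu+$ a smooth remainder that is $O(\delta^{a+\d+2})$ in $C^1$ by Taylor expansion against the moments. The fallback, if the moment conditions are awkward, is to allow only zero mass and accept the $(|x|/\delta)^{-\d}$ decay. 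Weighted by $|x|$, the outer integral is then $\delta^{2a+2}\delta^{2\d}\int_\delta|x|^{1-2\d}\,dx$, which is $\sim\delta^{2a+3+\d}$ for $\d\ge2$ and still $o(\delta^{2a+2+\d})$. In $\d=1$ this integral converges, giving $\delta^{2a+2}\delta^{2}$, which is larger; so in $\d=1$ we need the vanishing first moment. Choosing $\nu$ odd-symmetric appropriately, or radial with zero mass in $\d\ge2$, which yields exact vanishing of the field outside $B_\delta$ by Newton's theorem, gives the cleanest route. In $\d=1$, a zero-mass $\nu$ with zero first moment has compactly supported potential, which gives the same conclusion.
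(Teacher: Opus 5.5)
Your final route is correct and is essentially the paper's proof. You make a zero-mass perturbation of $\mu$ at scale $\delta$ around its zero, with amplitude comparable to $\mu$ on $B_\delta$ (keep the negative part of $\nu$ away from the origin, e.g.\ in $\{1/2\le|y|\le1\}$, so that $\rho_\delta\ge0$) and with field supported in $B_\delta$. Then $\dissip(\rho_\delta\mid\mu)\le2\|\rho_\delta\|_{L^\infty(B_\delta)}\mmd^2(\rho_\delta,\mu)\lesssim\delta\,\mmd^2(\rho_\delta,\mu)$. The paper does the same with $\mu(x)=4|x|$ in $\d=1$ and $\mu\sim|x|^2$ in $\d\ge2$. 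There it writes the perturbation directly as $\Delta\Phi_\delta$ with $\Phi_\delta$ compactly supported, which is equivalent to your radial zero-mass profile via Newton's theorem, and obtains a ratio of order $\delta^{-2}$.

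The far-field and Taylor-remainder bookkeeping is unnecessary. On $\T^\d$, the periodization of the compactly supported whole-space potential already solves the periodic Poisson equation. In $\d=1$, a zero-mass perturbation already has compactly supported whole-space field, and your claimed $|x|^{-1}$ decay and "convergent" outer integral there are inaccurate. The zero first moment is exactly what makes the mean-zero periodic field vanish outside $B_\delta$.
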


\begin{prop}\label{prop:counterexamplePLinfmu}
Let $\d\ge2$ and $0<\lambda<1$. There is no constant $c=c(\d,\lambda)>0$ such that
\begin{equation}
\dissip(\rho\mid\mu)\ge c\,\mmd^2(\rho,\mu)
\end{equation}
for every target density $\mu\ge\lambda$ almost everywhere on $\T^\d$ and every source density $\rho$.
\end{prop}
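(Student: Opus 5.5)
We need to show that for $\d\ge2$ and fixed $\lambda\in(0,1)$ the ratio $\dissip(\rho\mid\mu)/\mmd^2(\rho,\mu)$ can be made arbitrarily small over pairs with $\mu\ge\lambda$. Since \cref{thm:bounded-contrast-PL} gives coercivity when $M/m<\d/(\d-1)$, the counterexample must use targets with large contrast $M/m$. The plan is to let $\mu$ have total mass concentrated on a small set while keeping $\mu\ge\lambda$ everywhere.

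\textbf{Construction.} Set
\begin{equation*}
\mu=\lambda+(1-\lambda)\,\frac{\indic_{B_r}}{|B_r|},
\end{equation*}
so that $\mu\ge\lambda$ and $\mu$ is a probability density; its excess mass $1-\lambda$ sits on a tiny ball $B_r$ of radius $r\to0$. For the source, take $\rho$ equal to $\mu$ with a small spherical cavity. Concretely, $\rho=\mu$ on $B_r$ and on the exterior region; on the thin annulus $B_{R}\setminus B_r$, take $\rho=0$; and redistribute the removed mass $\lambda|B_R\setminus B_r|$ as a uniform excess on a shell $B_{2R}\setminus B_R$.

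Then $\rho-\mu$ is radial with zero total mass, supported in $B_{2R}$. By Newton's theorem, $\nabla h$ with $h=\g\ast(\rho-\mu)$ is radial and vanishes outside $B_{2R}$, up to the torus correction, which is harmless since the total charge is zero. Inside $B_r$ the enclosed charge of $\rho-\mu$ is zero, so $\nabla h=0$ there. On $B_R\setminus B_r$ we have $\rho=0$, so that region does not contribute to $\dissip$. Hence $\dissip$ only sees the shell $B_{2R}\setminus B_R$, where $\rho\approx\lambda+\text{excess}$ and the field is of the same size as in the cavity region.

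This geometric construction does not by itself produce a small ratio, because dissipation and energy scale comparably. I would therefore instead exploit the singular target directly.

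\textbf{Main route.} Choose $\rho=\lambda+(1-\lambda)\phi$, where $\phi$ is a probability density spread over a much larger ball $B_{s}$, with $r\ll s$. Then $\rho-\mu=(1-\lambda)(\phi-\indic_{B_r}/|B_r|)$. Let $h=(1-\lambda)u$ with $-\D u=\phi-\indic_{B_r}/|B_r|$.

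For $\d\ge2$, the energy $\int|\nabla u|^2$ over the region $r<|x|<s$ behaves like $\int_r^s \rho^{1-\d}\,d\rho$. This equals $\log(s/r)$ for $\d=2$ and $\sim r^{2-\d}$ for $\d\ge3$, so in either case it diverges as $r\to0$.

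Now compute $\dissip=(1-\lambda)^2\int|\nabla u|^2\rho$. The dangerous part is $\lambda\int|\nabla u|^2$, which is just $\lambda$ times the energy and gives ratio $\ge 2\lambda$. This suggests the key step: \emph{evacuate} the source from the high-field region. Take $\rho$ to vanish on an annulus $A=B_{s}\setminus B_{r'}$ (with $r<r'\ll s$) where most of the energy lives, placing the compensating mass elsewhere, for instance in $B_{r'}$.

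To make this work, I would arrange the enclosed charge profile as follows:
\begin{itemize}
\item[(i)] in $B_r$, the source equals $\lambda+(1-\lambda)/|B_r|$, so the source shares the target's spike and no field arises there;
\item[(ii)] on a cavity annulus, $\rho=0$, so the enclosed deficit grows like $\lambda|x|^\d$ and the field is $\sim\lambda|x|$;
\item[(iii)] the mass removed from the cavity is deposited in an outer thin shell.
\end{itemize}

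\textbf{Main obstacle.} The difficulty is obtaining a genuinely vanishing ratio rather than a bounded one. Dimension $\d\ge2$ is essential here: the energy of a dipole-layer or thin-shell configuration can dominate the dissipation on the region where $\rho>0$ when the excess mass is placed at a thin shell whose own self-energy is large relative to the field on it. I would first try the explicit radial computation, parameterizing the cavity and shell radii by $r\to0$, and check that energy$/$dissipation $\to\infty$. If the radial ansatz fails, I would fall back on the mechanism of \cref{prop:counterexvanishingmu}: near the spike, replace a vanishing target by a region where $\rho=0$ carries the bulk of $|\nabla h|^2$. The analogy is that the spike's effective background makes the complement look like a vacuum target.
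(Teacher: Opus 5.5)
There is a genuine gap. The proposal never reaches a configuration whose ratio $\dissip(\rho\mid\mu)/\mmd^2(\rho,\mu)$ actually tends to zero, and your closing paragraph leaves exactly this step open.

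The only explicit profile you commit to, items (i)--(iii), is the cavity-plus-shell construction you had already rejected. By (i) the source carries the target's spike on $B_r$. The enclosed charge of $\rho-\mu$ then vanishes on $B_r$, the field in the cavity is only of size $\lambda|x|$, and energy and dissipation stay bounded and comparable as $r\to0$.

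The divergent energy you correctly found in the main route comes from the \emph{unmatched} spike charge $1-\lambda$. That charge produces $|\nabla h|\ge(1-\lambda)/(\sigma_\d|x|^{\d-1})$ just outside $B_r$. You are right that $\rho\ge\lambda$ in that region forces a ratio of at least $2\lambda$. The conclusion to draw, however, is that the source's vacuum must contain the whole neighborhood of the spike, spike included. The displaced mass must be moved outward, away from the spike, not deposited inside $B_{r'}$.

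That is the paper's construction. Let $\mu=\lambda+(1-\lambda)\eta_r$, with $\eta_r$ a radial probability density supported in $B_r$. Fix $R$ independent of $r$, with $B_{2R}$ inside a fundamental domain, and set $M:=\mu(B_R)=\lambda|B_R|+1-\lambda$. Take for instance $\rho=0$ on $B_R$, $\rho=\lambda+M/|B_{2R}\setminus B_R|$ on $B_{2R}\setminus B_R$, and $\rho=\lambda$ elsewhere.

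\begin{itemize}
\item \emph{Energy.} Gauss's law gives $|\nabla h|\ge(1-\lambda)/(\sigma_\d|x|^{\d-1})$ on $\{r<|x|<R\}$, where $\rho=0$. Hence $\mmd^2(\rho,\mu)\ge\frac{(1-\lambda)^2}{2\sigma_\d}\int_r^R s^{1-\d}\,ds\to\infty$ for $\d\ge2$.
\item \emph{Field outside the ball.} The radial charge $\rho-\mu$ is neutral, so its Newtonian potential is constant outside $B_{2R}$. After subtracting that constant it is a periodic solution with the same gradient as $\g\ast(\rho-\mu)$. The field therefore vanishes outside $B_{2R}$.
\item \emph{Dissipation.} On the shell the field is at most $M/(\sigma_\d R^{\d-1})$. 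This gives $\dissip(\rho\mid\mu)\le(\lambda|B_{2R}\setminus B_R|+M)\,M^2/(\sigma_\d R^{\d-1})^2$, uniformly in $r$.
\end{itemize}

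The paper uses a $C^1$ potential instead: the Dirichlet solution on $B_R$ glued to a quadratic profile on a shell of width $\delta\le R/(\d-1)$. The mechanism is the same. This is also exactly where $\d\ge2$ enters, since $\int_r^R s^{1-\d}\,ds$ stays bounded when $\d=1$.
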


The restriction $\d\ge2$ in the preceding proposition is essential; see \cref{rem:d1-global-pl}.

We next state the positive Euclidean result, which concerns the validity of the PL inequality for radial data satisfying source-support inclusion. It will be contrasted below with obstructions on the unrestricted whole-space class.

\begin{thm}[PL inequality for radial data on $\R^\d$]
\label{thm:connected-support-PL}
Let \(\d\ge 2\) and let
\begin{equation}
A_{a,R}:=\{x\in\R^\d: a\le |x|\le R\},\qquad 0\le a<R<\infty.
\end{equation}
For any radial probability densities $\mu,\rho\in \P(\R^\d)\cap L^\infty(\R^\d)$ satisfying the conditions
\begin{enumerate}[label=\roman*.,leftmargin=3em]
    \item $\supp \mu = A_{a,R} $ for some $0\le a<R<\infty$;
    \item $\supp\rho \subset \supp \mu$;
    \item there exist constants $A,\la>0$ such that $A \ge \mu \ge \la$ almost everywhere on $\supp \mu$,
\end{enumerate}
one has
\begin{equation}
\mmd^2(\rho,\mu) \le C_{\d,\lambda,A}\,\dissip(\rho\mid\mu),
\qquad
C_{\d,\lambda,A}:=\max\Bigl\{\frac{1}{\lambda},\frac{\d^2A}{2\lambda^2}\Bigr\}.
\end{equation}
In particular, if \(\supp\mu=B_R\) and \(0<\lambda\le \mu\le A\) almost everywhere on \(B_R\), then the PL inequality holds for every radial probability density \(\rho\in\P(\R^\d)\cap L^\infty(\R^\d)\) supported in \(B_R\).

\end{thm}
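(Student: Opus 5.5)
The plan is to reduce everything to a one-dimensional weighted inequality for the radial cumulative mass defect, and then split according to whether the source is dense or close to vacuum. Write $\sigma_\d:=|\mathbb S^{\d-1}|$, abuse notation $\rho(x)=\rho(r)$, $\mu(x)=\mu(r)$, and set
\begin{equation*}
q(r):=\rho(B_r)-\mu(B_r)=\int_0^r\sigma_\d s^{\d-1}\bigl(\rho(s)-\mu(s)\bigr)\,ds .
\end{equation*}
By Newton's theorem, $h=\g\ast(\rho-\mu)$ satisfies $\nabla h(x)=-\frac{q(|x|)}{\sigma_\d|x|^{\d-1}}\frac{x}{|x|}$. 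Conditions i.–ii. and the fact that both measures are probabilities give $q\equiv0$ on $[0,a]\cup[R,\infty)$; when $a=0$, $q(r)=O(r^\d)$ as $r\to0$. With the weight $w(r):=(\sigma_\d r^{\d-1})^{-1}$, the two functionals become
\begin{equation*}
\mmd^2(\rho,\mu)=\frac12\int_a^R q^2w\,dr,\qquad \dissip(\rho\mid\mu)=\int_a^R q^2 w\,\rho\,dr .
\end{equation*}
The boundedness of $\rho$ justifies this formula and removes the ambiguity of \cref{rem:singular-source-dissipation}. The claim is therefore $\int q^2w\le 2C_{\d,\lambda,A}\int q^2w\rho$.

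Next I use the target lower bound through the identity $\mu=\rho-q'w$ on $(a,R)$, which follows from $q'=\sigma_\d r^{\d-1}(\rho-\mu)$. This gives
\begin{equation*}
\lambda\int q^2w\le\int q^2w\mu=\dissip-\int q^2q'w^2\,dr=\dissip-\frac{2(\d-1)}{3\sigma_\d^2}\int_a^R q^3r^{1-2\d}\,dr .
\end{equation*}
The second equality comes from integrating $(q^3/3)'w^2$ by parts; the boundary terms vanish by the support facts above. Where $q>0$ the cubic term has a favourable sign and can be dropped. Where $q<0$ I bound $|q(r)|\le\mu(B_r)\le A\sigma_\d r^\d/\d$, which turns the bad term into $\frac{2(\d-1)A}{3\d}\int_{\{q<0\}}q^2w$. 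This suffices only when $A$ is small compared with $\lambda$, so the cubic term must be handled more carefully.

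To do so, I split $(a,R)$ into $G:=\{\rho\ge\lambda/2\}$ and $E:=\{\rho<\lambda/2\}$. On $G$ one has directly $\int_G q^2w\le\frac2\lambda\dissip$, which should produce the $1/\lambda$ entry of $C_{\d,\lambda,A}$. On $E$, $q'\le-\frac\lambda2\sigma_\d r^{\d-1}$, so $q$ decreases at a rate comparable to the local volume. Hence on each maximal subinterval of $E$ the weighted integral $\int q^2w$ is controlled, through a one-dimensional Hardy/Gr\"onwall-type comparison in the volume variable $v=\sigma_\d r^\d/\d$, by the values of $q$ at the interval's endpoints. The upper bound $|q|\le Av$ on the negative part converts the resulting constants into $\d^2A/\lambda^2$. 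Those endpoint values would in turn be charged to $\dissip$ through neighbouring portions of $G$, or they vanish at $a$ and $R$. Here condition ii. is essential: it guarantees that $q$ returns to $0$ at $R$, so a vacuum region cannot carry a deficit of mass to infinity, which is exactly the obstruction in the whole-space setting.

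The main obstacle is this last step, controlling $\int_E q^2w$, especially for the negative part of $q$, with a constant depending only on $(\d,\lambda,A)$. The issue is that $E$ may consist of infinitely many intervals and $\rho$ may concentrate on $G$. I would first try a pointwise inequality of the form $\frac{d}{dr}\bigl(\Phi(q,r)\bigr)\ge c\,q^2w$ on $E$, with $\Phi$ chosen as a multiple of $q^2w\,(\sigma_\d r^{\d-1})/\lambda=q^2/\lambda$ corrected by the cubic term above. Integrated over $(a,R)$, with the $G$-portion absorbed into $\dissip$, this should close with the stated constant $\max\{1/\lambda,\d^2A/(2\lambda^2)\}$. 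The final assertion for $\supp\mu=B_R$ is just the case $a=0$.
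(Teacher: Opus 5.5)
Your reductions are correct: the radial formulas $\mmd^2(\rho,\mu)=\frac12\int_a^R q^2w\,dr$ and $\dissip(\rho\mid\mu)=\int_a^R q^2w\rho\,dr$, the identity $\int q^2w\mu\,dr=\dissip(\rho\mid\mu)-\frac{2(\d-1)}{3\sigma_\d^2}\int_a^R q^3r^{1-2\d}\,dr$, and the resulting PL inequality when $\lambda>\frac{2(\d-1)}{3\d}A$. However, the general case is the entire content of the theorem, and that is exactly the step you leave open. So the proposal does not prove the statement.

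The plan you sketch for that step also fails as described. Any part of $\Phi$ that is quadratic in $q$ cannot be absorbed on the concentrated part of $G$. Across a thin source shell at radius $r_2$ where $q$ rises from a small $q_-<0$ to $0$, $q^2/\lambda$ drops by $q_-^2/\lambda$. The shell, however, contributes only about $|q_-|^3/(3\sigma_\d^2r_2^{2\d-2})$ to $\dissip$. A source made of many thin shells, each repaying a small deficit, therefore defeats this bookkeeping. The cubic part $q^3w^2$ is exactly the calibration that absorbs arbitrarily concentrated $\rho$, but it is just your integration by parts and returns the same restriction on $A/\lambda$.

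Charging the endpoint values of $E$-components to neighbouring $G$-portions is likewise not a mechanism as stated. $E$ may have infinitely many components separated by short pieces with $\lambda/2\le\rho<\mu$; on these $q$ keeps decreasing, and they carry negligible dissipation. A deficit is then repaid only where the corresponding source mass actually sits, possibly at a very different radius with a different weight $w^2\propto r^{2-2\d}$. Nothing in your sketch performs this nonlocal pairing. In any case, adding separate $G$- and $E$-bounds would give a sum of constants rather than the maximum in $C_{\d,\lambda,A}$.

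The missing idea is to pair deficits and excesses through mass labels, as the paper does. For $\alpha\in(0,1)$ set $s_\alpha:=Q_\mu(\alpha)$, $\Gamma_\alpha(r):=\frac{m_\mu(r)-\alpha}{\sigma_\d r^{\d-1}}$ and $\Psi_\alpha(r):=\int_{s_\alpha}^r\Gamma_\alpha(t)\,dt\ge0$. In your notation $\Gamma_{m_\rho(r)}(r)=-q(r)w(r)$, so $\dissip(\rho\mid\mu)=\int_0^1\Gamma_\alpha(Q_\rho(\alpha))^2\,d\alpha$. A layer-cake computation (Fubini in $(r,\alpha)$) gives $\mmd^2(\rho,\mu)=\int_0^1\Psi_\alpha(Q_\rho(\alpha))\,d\alpha$, so the energy itself becomes an integral against the source's radial mass. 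Vacuum regions carry no labels and drop out, and the deficit at radius $r$ is automatically charged to the source shells carrying the labels between $m_\rho(r)$ and $m_\mu(r)$, wherever they are.

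The theorem then reduces to the scalar, target-only inequality $\Psi_\alpha(r)\le C_{\d,\lambda,A}\Gamma_\alpha(r)^2$ for $r\in[a,R]$. This is where connectedness and source-support inclusion enter: they give $\mu\ge\lambda$ between $r$ and $s_\alpha$. A source merely returning $q$ to zero at a larger radius is not enough, cf.\ \cref{prop:sharpii}. The inequality splits into two cases:
\begin{itemize}
\item If $r\le s_\alpha$, then $|\Gamma_\alpha|$ is nonincreasing on $[r,s_\alpha]$ and $|\Gamma_\alpha(r)|\ge\lambda(s_\alpha-r)$, which gives the constant $1/\lambda$.
\item If $r\ge s_\alpha$, then $\Gamma_\alpha(t)\le A(t-s_\alpha)$ and $\Gamma_\alpha(r)\ge\frac{\lambda}{\d}(r-s_\alpha)$, which gives $\d^2A/(2\lambda^2)$.
\end{itemize}
Each label falls into exactly one case, which is why the constant is a maximum.
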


\begin{remark}
    Let us comment on the assumptions of \cref{thm:connected-support-PL}. The theorem uses radial symmetry, connectedness of the target support, and source-support inclusion. The whole-space results below show that the inclusion restriction cannot simply be discarded: if the source may start arbitrarily far from the target, bounded propagation speed prevents any global coercivity or uniform convergence statement. Those examples do not determine, however, whether radial symmetry can be removed while retaining connected target support, source-support inclusion, and the target bounds in \cref{thm:connected-support-PL}.
\end{remark}
\begin{remark}
    Within the radial class, the two support conditions in \cref{thm:connected-support-PL} are sharp; see \cref{prop:sharpi,prop:sharpii}. We discuss the lower-bound condition in \cref{rem:sharpiii}.
\end{remark}

As a corollary of \cref{thm:connected-support-PL}, we obtain, to the best of our knowledge, the first exponential convergence result for the flow \eqref{eq:PDE} on $\R^\d$  for $\d\ge2$, albeit restricted to the radial setting.
\begin{cor}\label{cor:radialconvergence}
    Let $\d\ge 2$ and consider $\mu,\rho_0\in \mathcal{P}(\R^\d)$ satisfying the assumptions of \cref{thm:connected-support-PL}. Then, any global weak solution $\rho$ to \eqref{eq:PDE} constructed as in \cref{thm:Cauchy} satisfies:
    \begin{equation}
        \forall t\ge 0, \quad \mmd^2(\rho_t,\mu) \le e^{-t/C_{\d,\la,A}}\mmd^2(\rho_0,\mu),
    \end{equation}
    where $\displaystyle C_{\d,\la,A}:= \max\Bigl\{\frac{1}{\lambda},\frac{\d^2A}{2\lambda^2}\Bigr\}$.
\end{cor}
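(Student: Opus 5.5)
The plan is to combine the static PL inequality of \cref{thm:connected-support-PL} with the dissipation identity along the flow and close with Gr\"onwall's lemma. Both hypotheses of that theorem have to hold at every positive time, so the first task is to check that the constructed solution stays in the admissible class. This means showing that $\rho_t$ remains a radial bounded probability density with $\supp\rho_t\subset A_{a,R}$.

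Radiality should follow from uniqueness. Rotations commute with the equation since $\g$ is radial, and $\mu$, $\rho_0$ are radial. If $\rho_0\in L^\infty$, the rotated solution is again a solution in $L^\infty_{\mathrm{loc}}L^\infty$, so by the uniqueness part of \cref{thm:Cauchy} it coincides with $\rho_t$. Otherwise I would pass radiality to the limit in the approximation scheme used to build the solution, applying uniqueness at positive times where $\rho_t\in L^\infty$ by ultracontractivity.

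Support inclusion is the main obstacle. For radial data, Newton's theorem gives
\[
\nabla\g\ast(\rho-\mu)(x)=-\frac{(\rho-\mu)(B_{|x|})}{|\partial B_1|\,|x|^{\d-1}}\,\frac{x}{|x|}.
\]
So the velocity $v=-\nabla\g\ast(\rho-\mu)$ is radial with radial component $\bigl(\rho(B_r)-\mu(B_r)\bigr)/(|\partial B_1| r^{\d-1})$. At $r=R$ we have $\rho(B_R)=1=\mu(B_R)$, so the velocity vanishes on the outer sphere. At $r=a$ (when $a>0$) we have $\rho(B_a)=0=\mu(B_a)$, so it vanishes on the inner sphere as well. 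Hence neither boundary sphere is crossed.

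To make this rigorous I would work with the cumulative mass $M_t(r)=\rho_t(B_r)$. It solves a one-dimensional transport equation $\partial_t M+ c\,r^{1-\d}(M-\mu(B_r))\partial_r M=0$, which is monotone. A comparison principle then preserves $M_t(a)=0$ and $M_t(R)=1$: the constant barriers $0$ on $r\le a$ and $1$ on $r\ge R$ are invariant. Alternatively, one can argue with the Lagrangian flow, which is well defined since the field is log-Lipschitz when $\rho\in L^\infty$. In either form this step must also be checked against the approximation defining the solution.

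With admissibility in hand, the dissipation identity (or inequality) $\frac{d}{dt}\mmd^2(\rho_t,\mu)\le-\dissip(\rho_t\mid\mu)$ holds for the constructed solutions. Applying \cref{thm:connected-support-PL} at each $t$ gives
\[
\dissip(\rho_t\mid\mu)\ge C_{\d,\lambda,A}^{-1}\mmd^2(\rho_t,\mu),
\]
so $\frac{d}{dt}\mmd^2\le -C^{-1}\mmd^2$. Gr\"onwall's lemma, used in integrated form
\[
\mmd^2(\rho_t,\mu)+\int_s^t\dissip\le\mmd^2(\rho_s,\mu),
\]
yields the decay on $[s,t]$. Finally, let $s\downarrow0$ using lower semicontinuity or continuity of $\mmd^2$ at $t=0$ along the constructed solution; the claim is trivial if $\mmd^2(\rho_0,\mu)=\infty$.
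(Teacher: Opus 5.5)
Your proposal is essentially the paper's proof: radiality from uniqueness (here $\rho_0\in L^\infty$ by hypothesis, so your second case never arises), confinement of $\supp\rho_t$ to $A_{a,R}$ via the Osgood Lagrangian flow and radial characteristics, then the energy inequality, \cref{thm:connected-support-PL}, integral Gr\"onwall, and $s\downarrow0$. Two steps need tightening. First, the boundary argument must not use $\rho_t(B_R)=1$, which presupposes confinement; it should use the signs $\rho_t(B_r)\le1=\mu(B_r)$ for $r\ge R$ and $\mu(B_r)=0\le\rho_t(B_r)$ for $r\le a$ (equivalently, as in the paper, the mass-label speed $(\alpha-m_\mu(r))/(\sigma_\d r^{\d-1})$ is $\ge0$ at $r=a$ and $\le0$ at $r=R$). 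Second, letting $s\downarrow0$ requires $\limsup_{s\downarrow0}\mmd^2(\rho_s,\mu)\le\mmd^2(\rho_0,\mu)$, which the paper obtains from the $\dot H^{-1}$ chain rule at $t=0$ for compactly supported bounded data, whereas lower semicontinuity gives only the reverse inequality.
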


We next turn to the unrestricted whole-space class. The first result quantifies the elementary but important fact that a bounded Coulomb velocity cannot transport a remote source to the target faster than linearly in their initial separation.

\begin{thm}[Persistence on the support-travel-time scale]\label{thm:whole-space-persistence}
Let $\d\ge1$ and let $\mu\in\P(\R^\d)\cap L^\infty(\R^\d)$ be compactly supported. Let $(\rho_{0,n})_{n\ge1}\subset\P(\R^\d)\cap L^\infty(\R^\d)$ be compactly supported probability densities such that
\begin{equation}
M_0:=\sup_n\|\rho_{0,n}\|_{L^\infty}<\infty,
\qquad
L_0:=\sup_n\diam(\supp\rho_{0,n})<\infty,
\end{equation}
and
\begin{equation}
D_n:=\dist(\supp\rho_{0,n},\supp\mu)\longrightarrow\infty.
\end{equation}
Let $\rho_{n,t}$ denote the corresponding bounded solution furnished by \cref{thm:Cauchy}, and, for $D\ge2$, define
\begin{equation}
\Phi_\d(D):=
\begin{cases}
D, & \d=1,\\
\log D, & \d=2,\\
1, & \d\ge3.
\end{cases}
\end{equation}
There exist constants $c_0,c_1,C_1>0$ and $n_0\in\N$, depending only on $\d$, $\mu$, $M_0$, and $L_0$, such that, for every $n\ge n_0$ and every $0\le t\le c_0D_n$,
\begin{equation}\label{eq:persistence-comparison}
c_1\Phi_\d(D_n)
\le \mmd^2(\rho_{n,t},\mu)
\le \mmd^2(\rho_{0,n},\mu)
\le C_1\Phi_\d(D_n).
\end{equation}
In particular, with $\eta:=c_1/C_1>0$,
\begin{equation}\label{eq:persistence-fraction}
\mmd^2(\rho_{n,t},\mu)
\ge \eta\,\mmd^2(\rho_{0,n},\mu)
\qquad (0\le t\le c_0D_n).
\end{equation}
\end{thm}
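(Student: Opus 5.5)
The plan has four steps. First, a uniform velocity bound shows that the mass travels at bounded speed. Second, an explicit computation gives the size of the initial energy. Third, energy monotonicity gives the middle inequality of \eqref{eq:persistence-comparison}. Fourth, a lower bound on the energy holds as long as the mass stays far from $\supp\mu$.

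\textbf{Velocity bound and localization.} By the maximum principle in \cref{thm:Cauchy}, or directly from the $L^\infty$ estimate, we have $\|\rho_{n,t}\|_{L^\infty}\le \max\{M_0,\|\mu\|_{L^\infty}\}=:M$ for all $t$. The splitting $|\nabla\g|\lesssim |x|^{1-\d}$ into near and far parts then gives
\begin{equation}
\|\nabla\g\ast f\|_{L^\infty}\le C_\d \|f\|_{L^\infty}^{1-1/\d}\|f\|_{L^1}^{1/\d}
\end{equation}
for $\d\ge2$, and $\|\nabla\g\ast f\|_{L^\infty}\le \frac12\|f\|_{L^1}$ for $\d=1$. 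Applied to $f=\rho_{n,t}-\mu$ this yields $|v_t|\le V=V(\d,M)$. Since $v_t$ is bounded and log-Lipschitz, the weak solution is transported by the flow of $v_t$, with uniqueness in the bounded class. Hence $\supp\rho_{n,t}$ lies in the $Vt$-neighbourhood of $\supp\rho_{0,n}$. Take $c_0=1/(4V)$. For $t\le c_0D_n$, $\rho_{n,t}$ is then supported in a set $S_{n,t}$ with $\dist(S_{n,t},\supp\mu)\ge \frac34 D_n$ and $\diam S_{n,t}\le L_0+D_n/2$.

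\textbf{Upper bound on the initial energy.} Expand
\begin{equation}
2\mmd^2=\iint\g\, d\rho\, d\rho-2\iint \g\, d\rho\, d\mu+\iint\g\, d\mu\, d\mu .
\end{equation}
The self-energy of $\rho_{0,n}$ is bounded in terms of $M_0$ and $L_0$. The bound is uniform for $\d\ge2$; for $\d=1$ note that $\g=-|x|/2\le0$ on compact sets. The self-energy of $\mu$ is a fixed constant. The cross term involves $\g(x-y)$ for $|x-y|\in[D_n,D_n+L_0+\diam\supp\mu]$. This gives $-2\times(-D_n/2)\approx D_n$ in $\d=1$, $-2\times(-\frac1{2\pi}\log D_n)\sim\frac1\pi\log D_n$ in $\d=2$, and $O(D_n^{2-\d})$ in $\d\ge3$. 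So $\mmd^2(\rho_{0,n},\mu)\le C_1\Phi_\d(D_n)$ for $n$ large. The middle inequality follows from the energy dissipation inequality satisfied by the constructed solutions.

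\textbf{Lower bound along the flow (main obstacle).} For $\d=1,2$ the same expansion works, but the self-energy of $\rho_{n,t}$ must be bounded from below. In $\d=2$ this follows from $\diam S_{n,t}\lesssim D_n$, which gives $-\frac1{2\pi}\iint\log|x-y|\ge -\frac1{2\pi}\log(CD_n)$. Likewise the cross term satisfies $-2\iint\g\,d\rho\,d\mu\ge \frac1\pi\log(\tfrac34 D_n)$. This lower bound alone is insufficient: the two logarithms nearly cancel. I would instead lower-bound $\mmd^2$ by duality,
\begin{equation}
2\mmd^2(\rho,\mu)\ge \frac{\big(\int\phi\, d(\rho-\mu)\big)^2}{\|\nabla\phi\|_{L^2}^2},
\end{equation}
using a test function $\phi$ equal to $1$ on $\supp\mu$ and $0$ on $S_{n,t}$.

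Choose $\phi$ to be the capacitary potential of $\supp\mu\subset B_r$ relative to $B_{D_n/2}$. In $\d=2$ the log-capacity gives $\|\nabla\phi\|_{L^2}^2\approx 2\pi/\log(D_n/2r)$, so $\mmd^2\gtrsim\log D_n$. In $\d\ge3$, $\|\nabla\phi\|_2^2\le \mathrm{cap}(B_r)$, giving a constant lower bound. In $\d=1$ take $\phi$ piecewise linear with slope $1/(D_n/2)$, which gives $\|\phi'\|_2^2\approx 4/D_n$ and hence $\mmd^2\gtrsim D_n$. Since $\int\phi\,d(\rho-\mu)=-1$ as long as the supports are separated as in the first step, this yields $c_1\Phi_\d(D_n)$ uniformly for $t\le c_0D_n$. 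The delicate point is justifying the support-propagation claim for the weak solution, which I would do via the uniqueness/Lagrangian representation. Finally, \eqref{eq:persistence-fraction} follows with $\eta=c_1/C_1$.
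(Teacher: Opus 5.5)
Your proposal is correct and follows the paper's proof. It uses the same maximum-principle and near--far velocity bound, with Osgood Lagrangian transport to propagate the support separation, the same three-term expansion for the upper bound, and the energy inequality from $t=0$ for the middle inequality. Your capacitary test-function lower bound is the dual form of the paper's annular lemma (flux $-1$ through each sphere $\partial B_r(x_\mu)$, Cauchy--Schwarz, integration in $r$), and the optimal radial $\phi$ reproduces exactly $\frac{1}{2\sigma_\d}\int_{R_\mu}^{D}r^{1-\d}\,dr$.

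There are two harmless slips. First, in $\d=2$ the direct expansion does not cancel: the self and cross terms carry weights $1$ and $-2$, which leaves $\frac{1}{2\pi}\log D_n+O(1)$. Second, in $\d\ge3$ the capacity of $B_r$ relative to $B_{D_n/2}$ exceeds $\mathrm{cap}(B_r)$ rather than being bounded by it, although it is still $O(1)$.
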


This persistence estimate immediately rules out a decay rate that is uniform over the location of the initial datum: by translating a fixed compactly supported profile farther from the target, one can delay any fixed fractional decrease for arbitrarily long times.

\begin{cor}[No universal whole-space convergence rate]\label{cor:no-universal-rate}
Let $\d\ge1$ and let $\mu\in\P(\R^\d)\cap L^\infty(\R^\d)$ be compactly supported. Fix a nonnegative probability density $\theta\in C_c^\infty(\R^\d)$. There is no function $\gamma:[0,\infty)\to(0,\infty)$ with $\gamma(t)\to0$ as $t\to\infty$ such that, for every $a\in\R^\d$, the solution with initial datum $\rho_{0,a}=\theta(\cdot-a)$ satisfies
\begin{equation}
\mmd^2(\rho_{a,t},\mu)
\le \gamma(t)\mmd^2(\rho_{0,a},\mu)
\qquad\text{for every }t\ge0.
\end{equation}
Thus, a multiplicative decay modulus cannot be uniform over the initial datum even when its shape, $L^\infty$ norm, and support diameter are fixed and only its location varies.
\end{cor}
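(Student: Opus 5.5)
We argue by contradiction, using \cref{thm:whole-space-persistence} with translates of the fixed profile $\theta$. Suppose such a function $\gamma$ exists, with $\gamma(t)\to0$. Set $\eta>0$ as in \eqref{eq:persistence-fraction}; we will pick the sequence of translates before knowing $\eta$, so we first fix the data. Choose $a_n\in\R^\d$ with $|a_n|\to\infty$ and set $\rho_{0,n}:=\theta(\cdot-a_n)$. Then $\|\rho_{0,n}\|_{L^\infty}=\|\theta\|_{L^\infty}=:M_0$ and $\diam(\supp\rho_{0,n})=\diam(\supp\theta)=:L_0$, both independent of $n$. Since $\supp\mu$ and $\supp\theta$ are compact, $D_n=\dist(\supp\theta+a_n,\supp\mu)\ge |a_n|-C\to\infty$. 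Hence the hypotheses of \cref{thm:whole-space-persistence} hold, and it produces constants $c_0,c_1,C_1>0$, $n_0$, and $\eta=c_1/C_1$, all depending only on $\d,\mu,\theta$.

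Next we use that $\gamma(t)\to0$ to find $T$ with $\gamma(T)<\eta$. Then we choose $n\ge n_0$ so large that $c_0D_n\ge T$. The solution $\rho_{n,t}$ in the theorem is the bounded solution of \cref{thm:Cauchy}, which is unique in $L^\infty_{\rm loc}L^\infty$ since $\theta\in L^\infty$. It therefore coincides with the solution $\rho_{a_n,t}$ referred to in the corollary. The persistence bound \eqref{eq:persistence-fraction} at $t=T\le c_0D_n$ gives $\mmd^2(\rho_{a_n,T},\mu)\ge\eta\,\mmd^2(\rho_{0,a_n},\mu)$.

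We also need $\mmd^2(\rho_{0,a_n},\mu)>0$, since otherwise the inequalities carry no information. This follows from the lower bound $\mmd^2(\rho_{0,n},\mu)\ge c_1\Phi_\d(D_n)>0$ in \eqref{eq:persistence-comparison}; alternatively, $\rho_{0,a_n}\ne\mu$ because their supports are disjoint. Combining with the assumed decay gives
\[
\eta\,\mmd^2(\rho_{0,a_n},\mu)\le \mmd^2(\rho_{a_n,T},\mu)\le\gamma(T)\,\mmd^2(\rho_{0,a_n},\mu)<\eta\,\mmd^2(\rho_{0,a_n},\mu),
\]
which is a contradiction. The energy is finite for these bounded, compactly supported data, with $\mmd^2(\rho_{0,n},\mu)\le C_1\Phi_\d(D_n)$, so no $\infty\cdot0$ issue arises.

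There is no real obstacle, since all the analytic work sits in \cref{thm:whole-space-persistence}. The one point requiring care is the order of quantifiers: $\eta$ and $c_0$ must be uniform along the family of translates before $T$ is chosen, and this holds because they depend only on $\d$, $\mu$, $M_0$ and $L_0$, none of which depend on $n$. We also note that the argument in fact shows $\sup_a \mmd^2(\rho_{a,t},\mu)/\mmd^2(\rho_{0,a},\mu)\ge\eta$ for every $t\ge0$, which is a stronger conclusion than the corollary states.
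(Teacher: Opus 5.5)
Your proposal is correct and follows the paper's proof: you translate $\theta$ so that $D_n\to\infty$, invoke the persistence fraction bound of \cref{thm:whole-space-persistence}, and use positivity of the initial energy to contradict $\gamma(t)\to0$. The only difference is cosmetic: you first fix $T$ with $\gamma(T)<\eta$ and then take $n$ large, whereas the paper evaluates at $T_n=c_0D_n\to\infty$ and concludes $\gamma(T_n)\ge\eta$.
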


The same travel-time obstruction also rules out global PL coercivity. Indeed, averaging the dissipation over a persistence interval produces a time at which the dissipation-to-energy ratio is of order $D_n^{-1}$.

\begin{cor}[Dynamical obstruction to global PL inequalities]\label{cor:dynamical-global-PL}
Under the assumptions of \cref{thm:whole-space-persistence}, there are times $t_n\in(0,c_0D_n)$ and a constant $C<\infty$, independent of $n$, such that
\begin{equation}\label{eq:dynamical-PL-ratio}
\frac{\dissip(\rho_{n,t_n}\mid\mu)}{\mmd^2(\rho_{n,t_n},\mu)}
\le \frac{C}{D_n}\longrightarrow0.
\end{equation}
In particular, no positive constant depending only on $\d$, $\mu$, and a common $L^\infty$ bound can yield a PL inequality on the unrestricted class of compactly supported bounded probability densities.
\end{cor}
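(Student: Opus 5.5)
The plan is to combine the energy dissipation identity with the persistence bounds \eqref{eq:persistence-comparison} of \cref{thm:whole-space-persistence} and apply a mean-value argument on the interval $[0,c_0D_n]$.

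\textbf{Step 1: energy dissipation inequality.} For the bounded solutions furnished by \cref{thm:Cauchy}, I would first justify, for every $T>0$,
\begin{equation*}
\mmd^2(\rho_{n,T},\mu)+\int_0^T\dissip(\rho_{n,t}\mid\mu)\,dt\le \mmd^2(\rho_{0,n},\mu).
\end{equation*}
Since $\rho_{0,n}$ and $\mu$ are bounded, the solution lies in $L^\infty_{\mathrm{loc}}L^\infty$. The velocity $\nabla\g\ast(\rho-\mu)$ is then log-Lipschitz, and the identity can be obtained by testing the equation against $\g\ast(\rho-\mu)$ with a regularization. Alternatively, it is inherited from the approximation scheme used in the construction, where lower semicontinuity gives the inequality. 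This step is presumably already available from the proof of \cref{thm:Cauchy}. It is also what yields the middle inequality in \eqref{eq:persistence-comparison}.

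\textbf{Step 2: averaging.} Take $T_n:=c_0D_n$. By Step 1 and \eqref{eq:persistence-comparison},
\begin{equation*}
\frac{1}{T_n}\int_0^{T_n}\dissip(\rho_{n,t}\mid\mu)\,dt\le \frac{\mmd^2(\rho_{0,n},\mu)}{c_0D_n}\le \frac{C_1\Phi_\d(D_n)}{c_0D_n}.
\end{equation*}
The map $t\mapsto\dissip(\rho_{n,t}\mid\mu)$ is in $L^1(0,T_n)$ and is only defined for a.e.\ $t$. So the set of $t\in(0,T_n)$ where it does not exceed twice its average has positive measure, and we can choose $t_n$ in that set. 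For this $t_n$,
\begin{equation*}
\dissip(\rho_{n,t_n}\mid\mu)\le \frac{2C_1\Phi_\d(D_n)}{c_0D_n}.
\end{equation*}

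\textbf{Step 3: energy lower bound.} Since $t_n\le c_0D_n$, the lower bound in \eqref{eq:persistence-comparison} gives $\mmd^2(\rho_{n,t_n},\mu)\ge c_1\Phi_\d(D_n)$. Dividing the two estimates, the factor $\Phi_\d(D_n)$ cancels. This holds for $n\ge n_0$ with $D_n\ge 2$, which makes $\Phi_\d(D_n)>0$. The result is
\begin{equation*}
\frac{\dissip(\rho_{n,t_n}\mid\mu)}{\mmd^2(\rho_{n,t_n},\mu)}\le \frac{2C_1}{c_0c_1}\,\frac{1}{D_n},
\end{equation*}
so $C=2C_1/(c_0c_1)$, which is independent of $n$. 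The finitely many indices $n<n_0$ are discarded by relabeling.

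\textbf{Step 4: the "in particular" clause.} By \cref{thm:Cauchy}, $\rho_{n,t_n}$ is bounded in $L^\infty$ uniformly in $n$, for instance by $\max\{M_0,\|\mu\|_{L^\infty}\}$ via the comparison/maximum principle. It is also compactly supported, because the velocity is bounded and the initial support is compact. So each $\rho_{n,t_n}$ lies in the admissible class with a common $L^\infty$ bound. Since the ratio tends to $0$, no positive constant $c$ can satisfy $\dissip\ge c\,\mmd^2$ on this class.

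\textbf{Main obstacle.} The only real difficulty is Step 1. We need an energy dissipation (in)equality valid for every $T$, or at least for $T=c_0D_n$, together with $\dissip(\rho_{n,t}\mid\mu)$ being measurable in time. I expect both to follow from the regularity of bounded solutions, as in the paper's convergence arguments.
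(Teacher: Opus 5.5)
Your proposal is correct and follows essentially the same route as the paper. It uses the energy inequality from the initial time (which the paper justifies for compactly supported bounded data via the $\dot H^{-1}$ chain rule), averages the dissipation over $[0,c_0D_n]$ to select $t_n$, applies the persistence lower bound to control $\mmd^2(\rho_{n,t_n},\mu)$, and invokes the maximum principle together with bounded support displacement for the ``in particular'' clause. The only cosmetic difference is in the constant: the paper keeps the term $-\mmd^2(\rho_{n,T_n},\mu)$ and works with $\eta=c_1/C_1$, getting $C=(1-\eta)/(\eta c_0)$, whereas you drop that term and use the two $\Phi_\d(D_n)$ bounds directly, getting $C=2C_1/(c_0c_1)$.
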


The preceding obstruction selects bad states along solutions. We complement it with a direct construction of smooth densities whose dissipation-to-energy ratio degenerates independently of the flow.

\begin{prop}[Direct static obstruction to global PL inequalities]\label{prop:static-global-PL}
Let $\d\ge1$ and let $\mu\in\P(\R^\d)\cap L^\infty(\R^\d)$ be compactly supported. There exists a sequence $\rho_n\in C_c^\infty(\R^\d)\cap\P(\R^\d)$ such that
\begin{equation}\label{eq:static-PL-ratio}
\frac{\dissip(\rho_n\mid\mu)}{\mmd^2(\rho_n,\mu)}\longrightarrow0.
\end{equation}
Consequently, there is no finite constant $C=C(\d,\mu)$ such that
\begin{equation}
\mmd^2(\rho,\mu)\le C\,\dissip(\rho\mid\mu)
\end{equation}
for every smooth compactly supported probability density $\rho$.
\end{prop}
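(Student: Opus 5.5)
Take $\rho_n:=\theta(\cdot-a_n)$, a fixed smooth bump translated to distance $D_n=|a_n|\to\infty$ from $\supp\mu$, and compare the energy and the dissipation directly, without using the flow. Let $\supp\theta\subset B_1$ and $\supp\mu\subset B_{R_0}$. Write $h_n:=\g\ast(\rho_n-\mu)$ and split $\mmd^2(\rho_n,\mu)=\frac12\iint \g\,d\rho_n\,d\rho_n+\frac12\iint \g\,d\mu\,d\mu-\iint \g(x-y)\,d\rho_n(x)\,d\mu(y)$.

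The self-energies are finite constants independent of $n$. The cross term is $\g(a_n)(1+o(1))$ up to bounded errors. Hence the energy is $\gtrsim D_n$ for $\d=1$ and $\gtrsim\log D_n$ for $\d=2$. For $\d\ge3$ it converges to $\mmd^2(\theta,0)+\mmd^2(0,\mu)>0$, so it is bounded below by a positive constant. In each case we get $\mmd^2(\rho_n,\mu)\ge c\,\Phi_\d(D_n)$; this is the same lower bound as in \cref{thm:whole-space-persistence}, applied at $t=0$.

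For the dissipation, on $\supp\rho_n$ we decompose $\nabla h_n=\nabla\g\ast\rho_n-\nabla\g\ast\mu$. The far part satisfies $|\nabla\g\ast\mu(x)|\lesssim |x|^{1-\d}\lesssim D_n^{1-\d}$. The self part $\nabla\g\ast\rho_n$ is a fixed bounded field (since $\theta\in L^\infty$), so $\dissip$ is only $O(1)$. This is the main obstacle: in $\d\ge3$ the energy stays $O(1)$, so a bounded dissipation gives no degeneration. I would fix this by spreading the bump. Take instead $\rho_n:=r_n^{-\d}\theta((\cdot-a_n)/r_n)$ with $r_n\to\infty$ and $r_n=o(D_n)$. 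Then:
\begin{itemize}
\item the self-field scales like $r_n^{1-\d}$, so its contribution is $\int|\nabla\g\ast\rho_n|^2d\rho_n\sim r_n^{2-2\d}$;
\item the far field contributes $D_n^{2-2\d}$;
\item the self-energy scales like $r_n^{2-\d}$ for $\d\ge3$ (respectively $-\log r_n$ and $-r_n$ in $\d=2,1$);
\item the target self-energy is fixed, and the cross term is $O(D_n^{2-\d})$.
\end{itemize}
Hence for $\d\ge3$ the energy tends to $\frac12\iint\g\,d\mu\,d\mu>0$, while $\dissip\lesssim r_n^{2-2\d}+D_n^{2-2\d}\to0$. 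The ratio tends to $0$.

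In $\d=1,2$ I keep the same construction and check signs. In $\d=2$ the energy is $\frac12(-\tfrac1{2\pi}\log r_n)+C+\tfrac1{2\pi}\log D_n$, which is $\ge c\log(D_n/r_n)$; choosing $r_n=D_n^{1/2}$ gives order $\log D_n\to\infty$, while $\dissip\lesssim r_n^{-2}+D_n^{-2}\to0$. In $\d=1$, $\g=-|x|/2$, so the cross term gives $+D_n/2$ and the self-energy gives $-cr_n$. Choosing $r_n=D_n^{1/2}$ makes the energy $\sim D_n/2$. Here the fields are bounded by $\frac12$, so $\dissip\le1$ only; this is already enough, since $\dissip/\mmd^2\lesssim 1/D_n\to0$.

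The remaining routine checks are the following: the $\rho_n$ are smooth, compactly supported probability densities; the energies are finite, so $2\mmd^2=\|\rho_n-\mu\|_{\dot H^{-1}}^2$ where applicable; and the field estimates are uniform near $\supp\rho_n$, which holds because $\dist(\supp\rho_n,\supp\mu)\ge D_n-r_n-R_0\gtrsim D_n$. The nonexistence of a constant $C(\d,\mu)$ then follows immediately from \eqref{eq:static-PL-ratio}.
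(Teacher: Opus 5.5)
Your proposal is correct and follows the paper's proof: pure translation already suffices in $\d=1,2$ (energy $\gtrsim D_n$, resp.\ $\log D_n$, against $O(1)$ dissipation), and in $\d\ge3$ the paper uses your translation-plus-dilation with $|a_n|=r_n^2$, which is a special case of your condition $r_n\to\infty$, $r_n=o(D_n)$. The extra dilation you add in $\d=1,2$ is harmless but unnecessary (in $\d=2$ it even sends the dissipation to zero); also, in $\d=1$ it is $\|\g'\|_{L^\infty}$, not the field, that equals $\tfrac12$, while the field is bounded by $\tfrac12\|\rho_n-\mu\|_{L^1}\le1$, which is what gives $\dissip\le1$.
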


\begin{remark}\label{rem:two-global-PL-obstructions}
The direct construction reveals a dimension-dependent mechanism: translations suffice in dimensions one and two, whereas in dimensions at least three the translated source must also be dilated in order to make its self-interaction energy $\int_{\R^\d}\g\ast\rho\,d\rho$ small.
\end{remark}

We conclude the overview of our main results with rigidity of Lagrangian critical points (following the terminology of \cite{BV25}) and its application to planar long-time behavior. 

\begin{thm}[Rigidity under absolute continuity of the positive part]
\label{thm:lagrangian-rigidity}
Let $\d\ge1$ and let $\Omega\in\{\R^\d,\T^\d\}$. Let $\rho,\mu\in\P(\Omega)$, and suppose that $(\rho-\mu)^+$ is absolutely continuous with respect to Lebesgue measure on $\Omega$.

Let $h\in W^{1,1}_{\mathrm{loc}}(\Omega)$ denote the corresponding distributional Coulomb potential, so that
\begin{equation}\label{eq:lagrangian-critical-poisson}
\Delta h=\mu-\rho
\quad\text{in }\mathcal D'(\Omega).
\end{equation}
Assume that
\begin{equation}\label{eq:lagrangian-critical-vanishing}
\nabla h=0
\quad\text{almost everywhere with respect to }(\rho-\mu)^+.
\end{equation}
Then $\rho=\mu$.
\end{thm}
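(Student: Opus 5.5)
The plan is to argue by contradiction with a blow-up at a point of density of $(\rho-\mu)^+$. Write $\Delta h=\mu-\rho=\sigma-f\,dx$, where $f\,dx:=(\rho-\mu)^+$ with $f\in L^1_{\mathrm{loc}}$ and $f\ge0$, and $\sigma:=(\rho-\mu)^-$ is a nonnegative measure. By the Jordan decomposition, $\sigma$ is mutually singular with $f\,dx$.

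Suppose $\rho\neq\mu$. Then $f\not\equiv0$: if $(\rho-\mu)^+=0$, then $\rho\le\mu$, and both are probability measures, so $\rho=\mu$. On the torus the same equal-mass argument applies. Set $E:=\{f>0\}$, which has positive Lebesgue measure. By \eqref{eq:lagrangian-critical-vanishing}, $\nabla h=0$ a.e. on $E$. Choose $x_0\in E$ satisfying all of the following, which hold for Lebesgue-a.e. point of $E$:
\begin{itemize}
\item $x_0$ is a Lebesgue point of $f$ with value $a:=f(x_0)>0$;
\item $x_0$ is a point of Lebesgue density one of $E$;
\item $\sigma(B_r(x_0))=o(r^\d)$, because $\sigma\perp dx$, so its density with respect to Lebesgue measure vanishes a.e.
\end{itemize}

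Next I rescale. Let $u_r(y):=r^{-2}\bigl(h(x_0+ry)-c_r\bigr)$ on $B_4$, with constants $c_r$ to be fixed. Then $\nabla u_r(y)=r^{-1}\nabla h(x_0+ry)$, and
\[
\Delta u_r=\sigma_r-f(x_0+r\,\cdot)\,dy,
\]
where $\sigma_r(B_4)\to0$ and $f(x_0+r\,\cdot)\to a$ in $L^1(B_4)$. Moreover, $\nabla u_r=0$ a.e. on $E_r:=(E-x_0)/r$, and $|B_4\setminus E_r|\to0$. I decompose $\nabla u_r=\nabla N_r+G_r$ on $B_3$. Here $N_r:=\g\ast(\mathbf 1_{B_4}\Delta u_r)$ is the Newtonian potential of the localized measure, taken with the Euclidean kernel even on $\T^\d$, since everything is local. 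The remainder $G_r$ is a harmonic vector field on $B_3$. Since $\mathbf 1_{B_4}\Delta u_r$ has uniformly bounded total mass, $\nabla N_r$ is bounded in $L^p(B_4)$ for every $p<\d/(\d-1)$ (and in $L^\infty$ when $\d=1$). Along a subsequence, $\nabla N_r$ converges strongly in $L^1_{\mathrm{loc}}$ to $\nabla N_\infty$, where $N_\infty=\g\ast(-a\mathbf 1_{B_4})$.

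The main obstacle is controlling the harmonic part $G_r$, since nothing a priori bounds $\nabla h/r$ off $E$. The key point is that $G_r=-\nabla N_r$ a.e. on $E_r$, which is a set of nearly full measure. I would prove an absorption estimate using the mean-value property. For $y\in B_s$ and $B_\tau(y)\subset B_{s'}$ with $1\le s<s'\le 3$,
\[
|G_r(y)|\le \frac{1}{|B_\tau|}\int_{B_\tau(y)\cap E_r}|\nabla N_r|+\frac{|B_{s'}\setminus E_r|}{|B_\tau|}\sup_{B_{s'}}|G_r|.
\]
Since $|B_3\setminus E_r|\to0$, the standard iteration lemma over nested radii gives $\sup_{B_2}|G_r|\le C\|\nabla N_r\|_{L^1(B_3)}$, uniformly for small $r$. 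Here I first use that $G_r$ is locally bounded for each fixed $r$, because it is harmonic. Hence $\nabla u_r$ is bounded in $L^p(B_2)$ for some $p>1$, and $G_r$ converges locally uniformly along a subsequence.

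Finally I pass to the limit. For $\varphi\in C_c^\infty(B_1)$,
\[
\int\varphi\,d\Delta u_r=-\int\nabla\varphi\cdot\nabla u_r\,\mathbf 1_{B_2\setminus E_r}\,dy .
\]
By Hölder, the right-hand side is bounded by $\|\nabla\varphi\|_\infty\,\|\nabla u_r\|_{L^p(B_2)}\,|B_2\setminus E_r|^{1/p'}\to0$. The left-hand side converges to $-a\int\varphi\,dy$, which gives $a=0$, a contradiction. Thus $f=0$ a.e., so $(\rho-\mu)^+=0$, and equality of total mass then gives $\rho=\mu$.

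On $\T^\d$ the argument is purely local and therefore identical. The hypothesis $h\in W^{1,1}_{\mathrm{loc}}$ is used only to make sense of $\nabla h$ a.e. and to justify the distributional identity for $\Delta u_r$.
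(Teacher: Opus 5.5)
Your blow-up strategy differs from the paper's proof. For $\d\ge2$ the paper applies \cite[Theorem~1.1]{APR20}: the absolutely continuous part of $\Delta h$ vanishes a.e.\ on every level set $\{\nabla h=e\}$. For $\d=1$ it uses approximate differentiability of the $BV$ function $\partial_xh$. Your final limiting step is fine, but the step that controls the harmonic part $G_r$ fails for $\d\ge2$.

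The iteration lemma you invoke needs an absorption factor $\theta<1$ independent of $s'-s$. Yours is $|B_{s'}\setminus E_r|/|B_{s'-s}|$, which exceeds $1$ as soon as $s'-s\lesssim|B_3\setminus E_r|^{1/\d}$. Iterating with steps of that size gives at best something like $\sup_{B_2}|G_r|\lesssim|B_3\setminus E_r|^{-1}\|\nabla N_r\|_{L^1(B_3)}+2^{-c|B_3\setminus E_r|^{-1/\d}}\sup_{B_3}|G_r|$. This is not uniform in $r$ and still contains the uncontrolled quantity $\sup_{B_3}|G_r|$.

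This is not a bookkeeping issue: the principle ``harmonic on $B_4$ and bounded on a subset of $B_3$ of nearly full measure implies bounded on $B_2$'' is false. Already for $\d=2$, let $P$ be a polynomial that Runge's theorem makes $\eta$-close to $1$ on a tiny disc in $B_2$ and $\eta$-close to $0$ on the rest of $\overline{B_3}$, outside a slightly larger disc and a thin channel joining the gap to the exterior of $B_3$. Then $(\operatorname{Re}P,-\operatorname{Im}P)$ is a harmonic gradient field of size $\approx1$ at a point of $B_2$. It has size at most $\eta$ off an exceptional set of arbitrarily small measure. Your derivation uses only the mean-value property and $|B_3\setminus E_r|\to0$, so it cannot give the claimed bound. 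For $\d=1$ your argument does work: $G_r$ is then constant and equals $\mp N_r'$ on the positive-measure set $E_r$.

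What is actually missing is an a priori bound $\limsup_{r\downarrow0}r^{-1-\d}\int_{B_r(x_0)}|\nabla h|\,dx<\infty$ at a.e.\ $x_0\in E$. This does not follow from $x_0$ being a density point of $E$ and a Lebesgue point of $f$. It is Calder\'on--Zygmund information, which \cite{APR20} packages as weak-$L^{\d/(\d-1)}$ differentiability of $\nabla h$. You can supply it with classical tools:
\begin{enumerate}
\item Localize $\nu:=\chi\,\Delta h$, so that $\nabla h=-\nabla\g\ast\nu+\nabla H$ near $x_0$ with $H$ harmonic.
\item Add to your choice of $x_0$ the a.e.\ conditions $\sup_{r}r^{-\d}|\nu|(B_r(x_0))<\infty$ and $\sup_{\epsilon>0}\bigl|\int_{|x_0-z|>\epsilon}D^2\g(x_0-z)\,d\nu(z)\bigr|<\infty$. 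Both maximal operators are weak-$(1,1)$ on finite measures, so these hold a.e.
\item Split $\nu$ into its restrictions to $B_{2r}(x_0)$ and to the complement. Expand the far-field gradient to first order at $x_0$ with a second-order remainder. This gives $r^{-1-\d}\int_{B_r(x_0)}|\nabla h-c_r|\,dx\le C$ for suitable vectors $c_r$.
\item Since $\nabla h=0$ on $E$ and $x_0$ is a density point of $E$, this forces $|c_r|\le Cr$.
\item Hence $\nabla u_r$ is bounded in $L^1(B_4)$, so $G_r$ is bounded on $B_3$ by the mean-value property, and your conclusion goes through.
\end{enumerate}

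Two minor slips:
\begin{enumerate}
\item $\sigma=(\rho-\mu)^-$ need not be singular with respect to Lebesgue measure. What you actually use is that its Lebesgue density vanishes a.e.\ on $E$, and that holds because $\sigma\perp f\,dx$.
\item With $N_r=\g\ast(\mathbf 1_{B_4}\Delta u_r)$, it is $u_r+N_r$, not $u_r-N_r$, that is harmonic.
\end{enumerate}
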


\begin{thm}[Planar convergence]\label{thm:planar-convergence}
Let $\mu\in\P(\R^2)\cap L^\infty(\R^2)$, and let $\rho$ be a global weak solution constructed in \cref{thm:Cauchy}. Suppose that, for some $s_0>0$,
\begin{equation}\label{eq:planar-finite-energy-time}
\mmd^2(\rho_{s_0},\mu)<\infty.
\end{equation}
Then
\begin{equation}\label{eq:planar-narrow-convergence}
\rho_t\rightharpoonup\mu
\qquad\text{narrowly as }t\to\infty.
\end{equation}
Moreover,
\begin{equation}\label{eq:planar-weak-star-convergence}
\rho_t\stackrel{*}{\rightharpoonup}\mu
\qquad\text{in }L^\infty(\R^2),
\end{equation}
and, for every $\alpha>0$,
\begin{equation}\label{eq:planar-negative-sobolev-convergence}
\|\rho_t-\mu\|_{H^{-\alpha}(\R^2)}
\longrightarrow0.
\end{equation}
In particular, the conclusions hold when $\rho_0$ and $\mu$ are bounded, compactly supported probability densities.
\end{thm}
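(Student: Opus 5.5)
We plan to prove \cref{thm:planar-convergence} by a LaSalle-type compactness argument. The energy and dissipation give the a priori bounds, the logarithmic-capacity estimate gives tightness in $\R^2$, and \cref{thm:lagrangian-rigidity} identifies every possible limit as $\mu$.

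\textbf{Step 1: a priori bounds.} For $t\ge s_0$, \cref{thm:Cauchy} gives $\|\rho_t\|_{L^\infty}\le C(s_0,\mu)$. We first justify the energy--dissipation inequality
\begin{equation*}
\mmd^2(\rho_t,\mu)+\int_s^t\dissip(\rho_\tau\mid\mu)\,d\tau\le\mmd^2(\rho_s,\mu),\qquad s_0\le s\le t.
\end{equation*}
This is done at the level of the regularized approximations used to construct the solution in \cref{thm:Cauchy}, and then passed to the limit by lower semicontinuity. Two consequences follow. The energy $\mmd^2(\rho_t,\mu)$ is bounded for $t\ge s_0$, and $\int_{s_0}^\infty\dissip(\rho_\tau\mid\mu)\,d\tau<\infty$.

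\textbf{Step 2: tightness (the main obstacle).} In two dimensions $\g=-\frac1{2\pi}\log|x|$ is not positive, so a bound on $\|\rho-\mu\|_{\dot H^{-1}}$ does not immediately control escape of mass to infinity. We would argue by contradiction. Suppose a mass $\varepsilon$ of $\rho_t$ lies outside $B_R$, where $\supp\mu$ is essentially contained in $B_{R_0}$ up to a small tail. Use the test potential
\begin{equation*}
\phi_R=\min\Bigl\{1,\frac{\log(|x|/R_0)}{\log(R/R_0)}\Bigr\}_+ .
\end{equation*}
This is the capacitary potential of the annulus, with $\|\nabla\phi_R\|_{L^2}^2=2\pi/\log(R/R_0)$. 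Duality then gives
\begin{equation*}
\varepsilon\lesssim\Bigl|\int\phi_R\,d(\rho_t-\mu)\Bigr|+o(1)\le\|\rho_t-\mu\|_{\dot H^{-1}}\,\|\nabla\phi_R\|_{L^2}\lesssim\bigl(\mmd^2(\rho_{s_0},\mu)/\log R\bigr)^{1/2}.
\end{equation*}
Hence $\varepsilon\to0$ uniformly in $t$ as $R\to\infty$. The delicate points are the tail of $\mu$, which requires choosing $R_0$ depending on $\varepsilon$, and the justification of the duality pairing with a nondecaying test function. For the latter we use that $\rho_t-\mu$ has zero total mass and finite energy.

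\textbf{Step 3: identifying the limit.} Take $t_n\to\infty$. Since $\int^\infty\dissip<\infty$, we can pick windows $[t_n,t_n+1]$ on which $\int_{t_n}^{t_n+1}\dissip\to0$. By Steps 1--2 and the $L^\infty$ bound, along a subsequence the curves $\rho_{t_n+\cdot}$ converge narrowly and weak-$*$, uniformly on $[0,1]$. Equicontinuity in $W_1$ follows from the weak formulation, since the velocity is bounded in $L^2(\rho)$ on average. The limit is constant in time, equal to some $\rho_\infty\in\P(\R^2)\cap L^\infty$; it is a probability measure by tightness.

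The fields $\nabla\g\ast(\rho_\tau-\mu)$ converge strongly in $L^2_{\mathrm{loc}}$ by local compactness: they are locally in $W^{1,p}$ with bounds uniform in $\tau$. Together with the weak-$*$ convergence of $\rho$, lower semicontinuity of $\int|v|^2\,d\rho$ then gives
\begin{equation*}
\int|\nabla h_\infty|^2\,d\rho_\infty=0,\qquad h_\infty=\g\ast(\rho_\infty-\mu).
\end{equation*}
Hence $\nabla h_\infty=0$ $\rho_\infty$-a.e., and in particular a.e. with respect to $(\rho_\infty-\mu)^+\le\rho_\infty$. Since $(\rho_\infty-\mu)^+$ is absolutely continuous, \cref{thm:lagrangian-rigidity} gives $\rho_\infty=\mu$. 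Every sequence therefore has a subsequence converging to $\mu$, which yields narrow and weak-$*$ convergence along $t\to\infty$. Upgrading from the window to every time uses the equicontinuity.

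\textbf{Step 4: $H^{-\alpha}$ convergence.} Fix $\alpha>0$ and split
\begin{equation*}
\|\rho_t-\mu\|_{H^{-\alpha}}^2=\int\frac{|\widehat{\rho_t-\mu}(\xi)|^2}{(1+|\xi|^2)^{\alpha}}\,d\xi
\end{equation*}
into low, intermediate and high frequencies. On $|\xi|\le\delta$, the integrand is at most $|\xi|^{-2}|\widehat{\rho_t-\mu}|^2$, so this part is bounded by the uniform $\dot H^{-1}$ bound of Step 1. Its size, as $\delta\to0$, is controlled by tightness: it gives equicontinuity of the Fourier transforms near $0$, combined with $\widehat{\rho_t-\mu}(0)=0$. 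On $|\xi|\ge K$, the weight $(1+|\xi|^2)^{-\alpha}$ is small while $\|\rho_t-\mu\|_{L^2}$ is uniformly bounded. On the intermediate region, pointwise convergence of $\widehat{\rho_t-\mu}$ (from narrow convergence) and dominated convergence finish the argument.

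Finally, the bounded, compactly supported case satisfies \eqref{eq:planar-finite-energy-time}. Indeed, the energy is then finite at $t=0$ and nonincreasing.
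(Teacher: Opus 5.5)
Your argument has the same architecture as the paper's proof. Both use the energy--dissipation inequality after $s_0$, uniform tightness from the logarithmic capacitary cutoff (the paper takes inner radius $R$ and outer radius $R^2$), local compactness of the Coulomb fields in $W^{1,p}_{\mathrm{loc}}$ with $p>2$, \cref{thm:lagrangian-rigidity} to identify every $\omega$-limit, and a frequency split for $H^{-\alpha}$. Compactness of whole curves on the windows $[t_n,t_n+1]$ is a heavier but valid substitute for the paper's choice of a single good time $s_n\in[t_n,t_n+1]$ with $\dissip(\rho_{s_n}\mid\mu)\le\int_{t_n}^{t_n+1}\dissip(\rho_\tau\mid\mu)\,d\tau\to0$.

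Three details in Steps 3--4 should be tightened. First, identify the local limit of the fields as the Coulomb field of $\rho_\infty-\mu$. The paper does this by testing against $\Phi\in C_c^\infty(\R^2;\R^2)$ and noting that $y\mapsto-\int\nabla\g(x-y)\cdot\Phi(x)\,dx$ lies in $C_0(\R^2)$, so narrow convergence suffices. Second, $\g\ast(\rho_\infty-\mu)$ need not converge on $\R^2$ without a logarithmic moment; replace it by the normalized potential $\int(\g(x-y)-\g(y))\,d(\rho_\infty-\mu)(y)\in W^{2,p}_{\mathrm{loc}}$. Third, drop the low-frequency analysis in Step 4: since $|\widehat{\rho_t-\mu}|\le2$, dominated convergence runs on the whole ball $\{|\xi|\le K\}$.

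The step that does not go through as described is Step 1. For the original approximations built from $\rho_0\ast\varphi_k$, you have the smooth identity
\[
\mmd^2(\rho_k(t),\mu_k)+\int_s^t\dissip(\rho_k(\tau)\mid\mu_k)\,d\tau=\mmd^2(\rho_k(s),\mu_k).
\]
Lower semicontinuity lets you pass to the limit only on the left-hand side. To get $\mmd^2(\rho_s,\mu)$ on the right you need $\limsup_k\mmd^2(\rho_k(s),\mu_k)\le\mmd^2(\rho_s,\mu)$, that is, strong $\dot H^{-1}$ convergence at time $s$. Nothing in that construction supplies this for an arbitrary $\rho_0\in\P(\R^2)$, since local convergence of the fields does not control the logarithmic tails.

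The paper closes this gap by restarting at time $s$ from the bounded density $\rho_s$. For $\rho_s\ast\varphi_k$ and $\mu\ast\varphi_k$ the energies do converge, because mollification is contractive and strongly continuous on $\dot H^{-1}$. The limit of the restarted flows then satisfies \eqref{eq:weak-energy-inequality} by lower semicontinuity, and uniqueness in the bounded class (\cref{prop:stability}) identifies it with $\rho_{s+\cdot}$. This inequality is already recorded for the constructed solutions before the theorem, so you can simply cite it. With that change, your proof is complete.
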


The topologies in \cref{thm:planar-convergence} do not by themselves imply convergence in the Coulomb MMD; see \cref{rem:planar-coulomb-endpoint}.

\subsection{Companion works}

We take this opportunity to advertise three closely related manuscripts in preparation. They address overlapping but complementary aspects of the questions considered here; none of their results is used in the present paper.

Joint work of the second author with Slep\v{c}ev and Wang studies the Wasserstein gradient flow of MMDs generated by the energy kernels
\begin{equation}
K(x,y)=-|x-y|^q,
\qquad
0<q<2,
\end{equation}
on $\R^\d$ \cite{RosenzweigSlepcevWang2026EnergyMMD}. In the regime $\d+q-2>0$, that manuscript establishes global well-posedness in finite subcritical $L^p$ classes and at the $L^\infty$ endpoint, proves global noncollision for the associated $N$-particle dynamics, and obtains a modulated-energy mean-field estimate.\footnote{The analogous modulated-energy mean-field estimate for our setting already follows from existing work (see, e.g., \cite[Section~6.2.1, in particular Theorem~6.3]{Ser24}).} It also studies structural properties of the energy MMD, stationary critical states, and obstructions to global PL inequalities and data-independent decay. For the one-dimensional Coulomb $N$-particle dynamics, it moreover exhibits an explicit two-time-scale mechanism: a slow transport phase governed by the time needed for the particles to reach the support of the target, followed, when the target density is uniformly positive on its support, by exponential relaxation to the discrete equilibrium configuration. This gives a sharper point of contact with the present paper: our whole-space persistence theorem isolates the corresponding support-travel-time obstruction for continuum solutions in every dimension, whereas the energy-kernel manuscript also resolves the subsequent fast phase in the explicit one-dimensional particle setting. 

The authors also study the unconfined, target-free quadratic Wasserstein gradient flow of sub-Coulomb Riesz interaction energies in the range $-2<s<\d-2$ \cite{ChodronDeCourcelRosenzweig2026SubCoulomb}. We construct canonical global weak solutions in the scaling-critical and subcritical Lebesgue classes, prove uniqueness in the corresponding Eulerian class and monotonicity of lower Lebesgue norms, and show that higher Lebesgue integrability is not created. This absence of any hypercontractivity is in stark contrast to the Coulomb/super-Coulomb case. We also develop a radial measure-valued well-posedness theory on annuli. The aforementioned results also apply to the confined setting, including the MMD target case, under suitable assumptions on the confinement. 
The main contrast with the present Coulomb case is that the sub-Coulomb evolution is transport-dominated and preserves, rather than improves, Lebesgue singularities.

Joint work of the second author with Hess-Childs and Serfaty studies the optimal empirical quantization problem for homogeneous and screened Riesz MMDs and diagonal-excluded modulated energies \cite{HessChildsRosenzweigSerfaty2026Quantization}. Given a target probability measure $\mu$ and a sample size $N$, the empirical quantization problem asks for points $x_1,\ldots,x_N$ minimizing the prescribed discrepancy or energy between $\mu$ and the equal-weight empirical measure $\frac{1}{N}\sum_{i=1}^N\delta_{x_i}$; see \cite{GrafLuschgy2000} for the classical theory. The work \cite{HessChildsRosenzweigSerfaty2026Quantization} proves fixed-cardinality existence throughout the homogeneous Riesz and logarithmic range, upper and lower bounds for the quantization rate which match in their dependence on the number of samples, and separation estimates for empirical minimizers. It also treats a larger class of (screened) Riesz-type kernels. This work is a static/equilibrium complement to the present dynamical analysis. We mention that, for energy kernels, overlapping results were recently obtained independently by Colasanto, Focardi, Fornasier, and Mattesini \cite{colasantoSharpRatesMMD2026}.

\subsection{Organization of the paper}

The remainder of the paper is organized as follows. \Cref{sec:cauchy-problem} establishes the Cauchy theory and the regularity counterexample. \Cref{sec:torus-coercivity} treats convergence and coercivity on the torus, including the global PL inequality near the uniform target and the limitations of such coercivity. \Cref{sec:radial-pl} proves the radial source-support-inclusion PL inequality and the sharpness of its support hypotheses. \Cref{sec:whole-space-obstructions} develops the whole-space obstructions at spatial infinity, first dynamically through the support-travel-time scale and then by a direct static construction. \Cref{sec:criticality-planar} classifies Lagrangian critical points for which $(\rho-\mu)^+$ is absolutely continuous and proves qualitative convergence on $\R^2$. \Cref{sec:future-directions} records open problems and future directions suggested by the preceding results.

\subsection{Acknowledgments}

The authors thank L\'ena\"{\i}c Chizat and Roberto Colombo for discussion about their work \cite{CCCFR26}. The second author also thanks Dejan Slep\v{c}ev and Lihan Wang for their collaboration which inspired parts of the present work.

\section{The Cauchy problem}\label{sec:cauchy-problem}

In this section, we study the Cauchy problem \eqref{eq:PDE} and prove \cref{thm:Cauchy}. We first establish the existence of global weak solutions to \eqref{eq:PDE} under the assumption $\rho_0\in \mathcal{P}(\Omega)$ and $\mu\in\mathcal{P}(\Omega)\cap L^\infty(\Omega)$ thanks to an ultracontractivity argument. We do not know whether uniqueness holds for such global weak solutions. Nevertheless, uniqueness is recovered if we assume $\rho_0\in \mathcal{P}(\Omega)\cap L^\infty(\Omega)$ and $\mu\in\mathcal{P}(\Omega)\cap L^\infty(\Omega)$, by adapting the classical Yudovich theory for 2D Euler equations \cite{yudovichNonstationaryFlowIdeal1963}. Finally, we show that the H\"older continuity constants of solutions can deteriorate with time at an exponential rate, even if the initial data and target are smooth.

\subsection{Existence}

In this subsection, we construct global weak solutions to \eqref{eq:PDE} starting from $\rho_0\in \mathcal{P}(\Omega)$.

Our starting point is the following approximation scheme:
\begin{equation}\label{eq:PDEscheme}
\begin{cases}
    \p_t \rho_\ep - \div (\rho_\ep \nab\g\ast (\rho_\ep - \mu_\ep)) = 0, \\
    \rho_\ep\vert_{t=0} = \rho_{0,\ep}. 
\end{cases}
\end{equation}
When $\rho_{0,\ep}$ and $\mu_\ep$ are smooth probability densities, there exists a unique global smooth solution $\rho_\ep$ to \eqref{eq:PDE}; this follows by a straightforward adaptation of the argument in \cite{linHydrodynamicLimitGinzburgLandau2000}, since the only difference is an additional smooth bounded drift term.

The approximation preserves two basic structural properties of the flow: conservation of mass and dissipation of the Coulomb energy.
\begin{itemize}
    \item the mass is conserved:
    \begin{equation}
        \forall t\ge 0, \quad \int_{\Omega} \rho_\ep(t,x)\, dx = \int_{\Omega} \rho_{0,\ep}\, dx = 1,
    \end{equation}
    \item the energy decreases:
    \begin{equation}
       \forall t\ge s\ge 0,\quad \mmd^2(\rho_\ep(t),\mu_\ep) + \int_s^t \dissip(\rho_\ep(\tau)\mid\mu_\ep) \, d\tau = \mmd^2(\rho_{\ep}(s),\mu_\ep).
    \end{equation}
\end{itemize}

We now record the following estimate, which allows us to construct global weak solutions for initial data that are mere probability measures.
\begin{prop}[Ultracontractivity]\label{prop:hyper}
    For any $p \in (1,\infty]$, the smooth solution $\rho_\ep$ to \eqref{eq:PDEscheme} satisfies
    \begin{equation}\label{eq:hyper_est}
        \forall t>0,\quad \|\rho_\ep(t)\|_{L^p(\Omega)} \le \left( e^{-t \|\mu_\ep\|_{L^\infty}} \|\rho_{0,\ep}\|_{L^p(\Omega)}^{-\frac{p}{p-1}} + \frac{1-e^{-t \|\mu_\ep\|_{L^\infty}}}{\|\mu_\ep\|_{L^\infty}} \right)^{-\frac{p-1}{p}}.
    \end{equation}
\end{prop}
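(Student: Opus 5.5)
We argue by a direct $L^p$ energy estimate on the smooth solution $\rho_\ep$, turning it into a Bernoulli-type differential inequality. Write $h_\ep:=\g\ast(\rho_\ep-\mu_\ep)$, so that $-\Delta h_\ep=\rho_\ep-\mu_\ep$ on $\R^\d$; on $\T^\d$ the constants cancel because both measures have unit mass. For $p\in(1,\infty)$, we multiply the equation by $p\rho_\ep^{p-1}$ and integrate by parts twice:
\begin{equation*}
\frac{d}{dt}\int_\Omega\rho_\ep^p\,dx
=-(p-1)\int_\Omega\nabla(\rho_\ep^p)\cdot\nabla h_\ep\,dx
=(p-1)\int_\Omega\rho_\ep^p\,\Delta h_\ep\,dx
=-(p-1)\int_\Omega\rho_\ep^{p}(\rho_\ep-\mu_\ep)\,dx .
\end{equation*}
Using $\mu_\ep\le\|\mu_\ep\|_{L^\infty}=:m$ gives
\begin{equation*}
\frac{d}{dt}\|\rho_\ep\|_{L^p}^p\le-(p-1)\|\rho_\ep\|_{L^{p+1}}^{p+1}+(p-1)m\|\rho_\ep\|_{L^p}^p .
\end{equation*}
The integrations by parts are justified on $\R^\d$ by the decay of the smooth solution, and on $\T^\d$ they are immediate.

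The key step is to bound the nonlinear term from below using only $\|\rho_\ep\|_{L^p}$ and the conserved mass. By H\"older interpolation between $L^1$ and $L^{p+1}$, with $\|\rho_\ep\|_{L^1}=1$, we have $\|\rho_\ep\|_{L^p}\le\|\rho_\ep\|_{L^{p+1}}^{\theta}$ with $\theta=\frac{p^2-1}{p^2}$. Writing $y:=\|\rho_\ep\|_{L^p}^p$, this gives $\|\rho_\ep\|_{L^{p+1}}^{p+1}\ge y^{p/(p-1)}=y^{1+1/(p-1)}$. Hence
\begin{equation*}
y'\le(p-1)\bigl(my-y^{1+\frac1{p-1}}\bigr).
\end{equation*}
This is a Bernoulli inequality. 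We set $z:=y^{-1/(p-1)}=\|\rho_\ep\|_{L^p}^{-p/(p-1)}$ and compute $z'=-\frac1{p-1}y^{-p/(p-1)}y'\ge -mz+1$. Gr\"onwall's lemma applied to $(e^{mt}z)'\ge e^{mt}$ then gives
\begin{equation*}
z(t)\ge e^{-mt}z(0)+\frac{1-e^{-mt}}{m},
\end{equation*}
and raising to the power $-(p-1)/p$ is exactly \eqref{eq:hyper_est}. If $y$ vanishes or $z(0)=0$ there is nothing to prove. A comparison argument handles the sign issues, since $y>0$ by mass conservation.

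For $p=\infty$, we take the limit $p\to\infty$ in the estimate for fixed $\ep$ and $t$. The right-hand side converges to $\bigl(e^{-mt}\|\rho_{0,\ep}\|_{L^\infty}^{-1}+\frac{1-e^{-mt}}{m}\bigr)^{-1}$, and $\|\rho_\ep(t)\|_{L^p}\to\|\rho_\ep(t)\|_{L^\infty}$ because $\rho_\ep(t)\in L^1\cap L^\infty$. Alternatively, one could run a maximum-principle argument along characteristics, where $\dot\rho=-\rho(\rho-\mu_\ep)\le-\rho^2+m\rho$, which yields the same Riccati bound.

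The main obstacle is the interpolation step. It is what makes the bound independent of the initial data: the cubic-type dissipation $\int\rho^{p+1}$ must dominate a superlinear power of $\|\rho\|_p^p$, and this works only because of mass conservation. Checking that the exponent is exactly $p/(p-1)$, so that the ODE linearizes under the substitution $z=y^{-1/(p-1)}$, is the crux. The remaining justifications, namely the integrations by parts and the limit $p\to\infty$, are routine for smooth, rapidly decaying solutions.
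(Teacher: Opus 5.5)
Your proposal is correct and follows essentially the same route as the paper: the $L^p$ energy identity $\frac{d}{dt}\int\rho^p=(p-1)\int\rho^p(\mu-\rho)$, the bound $\int\rho^p\mu\le\|\mu\|_{L^\infty}\|\rho\|_{L^p}^p$, the mass-one interpolation (which the paper phrases as Jensen) giving $\|\rho\|_{L^{p+1}}^{p+1}\ge\|\rho\|_{L^p}^{p^2/(p-1)}$, and the linearizing substitution $\|\rho\|_{L^p}^{-p/(p-1)}$ followed by $p\to\infty$. Your alternative remark for $p=\infty$, namely the pointwise Riccati bound $\dot\rho\le-\rho^2+\|\mu_\ep\|_{L^\infty}\rho$ along characteristics, is also valid; the paper does not use it.
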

\begin{remark}
    Dropping the nonnegative term involving the $L^p$ norm of the initial data, we obtain the estimate
    \begin{equation}
         \forall t>0,\quad \|\rho_\ep(t)\|_{L^p(\Omega)} \le\left(  \frac{1-e^{-t \|\mu_\ep\|_{L^\infty}}}{\|\mu_\ep\|_{L^\infty}} \right)^{-\frac{p-1}{p}},
    \end{equation}
    which is independent of the initial data and provides an instantaneous $L^\infty$ regularization if $\mu\in L^\infty$, hence the name ``ultracontractivity''.
\end{remark}
\begin{proof}[Proof of \cref{prop:hyper}]
    Let us drop the $\ep$ subscript for simplicity. Differentiating with respect to time, we have
    \begin{align}
        \frac{d}{dt} \int_{\Omega} \rho^p\, dx &= \int_{\Omega} p \rho^{p-1} \div(\rho \nabla \g\ast (\rho - \mu)) \, dx \notag\\
        &= -p(p-1) \int_{\Omega} \rho^{p-1} \nabla \rho \cdot (\nabla \g\ast (\rho - \mu)) \, dx \notag\\
        &= -(p-1) \int_{\Omega} \nabla(\rho^p) \cdot (\nabla \g\ast (\rho - \mu)) \, dx \notag\\
        &= (p-1) \int_{\Omega} \rho^p \Delta \g\ast (\rho - \mu) \, dx.
    \end{align}
    Using the fact that $\Delta \g\ast (\rho - \mu) = \mu - \rho$,
    \begin{align}
        \frac{d}{dt} \int_{\Omega} \rho^p\, dx &= (p-1) \int_{\Omega} \rho^p (\mu - \rho) \, dx \notag\\
        &= (p-1) \int_{\Omega} \rho^p \mu \, dx - (p-1) \int_{\Omega} \rho^{p+1} \, dx.
    \end{align}
    By H\"older's inequality, $\displaystyle\int_{\Omega} \rho^p \mu \, dx\le \|\mu\|_{L^\infty} \|\rho\|_{L^p}^p$. Furthermore, since $\displaystyle\int_{\Omega} \rho\, dx = 1$, Jensen's inequality implies $\|\rho\|_{L^p}^p \le \|\rho\|_{L^{p+1}}^{\frac{(p-1)(p+1)}{p}}$, or equivalently $\|\rho\|_{L^{p+1}}^{p+1} \ge \|\rho\|_{L^p}^{\frac{p^2}{p-1}}$. Thus, we obtain the differential inequality
    \begin{equation}
        \frac{d}{dt} \|\rho\|_{L^p}^p\le (p-1) \|\mu\|_{L^\infty} \|\rho\|_{L^p}^p - (p-1) \|\rho\|_{L^p}^{\frac{p^2}{p-1}}.
    \end{equation}
    Setting $y(t) = \|\rho(t)\|_{L^p}^{-\frac{p}{p-1}}$, we find that $y$ satisfies
    \begin{equation}
        y' \ge -\|\mu\|_{L^\infty} y + 1.
    \end{equation}
    Integrating this inequality yields
    \begin{equation}
        y(t) \ge e^{-t \|\mu\|_{L^\infty}} y(0) + \frac{1 - e^{-t \|\mu\|_{L^\infty}}}{\|\mu\|_{L^\infty}},
    \end{equation}
    which, upon substituting back $y(t) =  \|\rho(t)\|_{L^p}^{-\frac{p}{p-1}}$, gives the desired estimate \eqref{eq:hyper_est}. The $L^\infty$ estimate follows by taking $p \to \infty$.
\end{proof}

We now use the ultracontractivity estimate in \cref{prop:hyper} to complete the existence part of \cref{thm:Cauchy} for arbitrary $\rho_0\in\P(\Omega)$. We give the compactness argument on $\R^\d$; on $\T^\d$ the same proof is global and the tightness step below is unnecessary. Let $\varphi$ be a nonnegative compactly supported smooth function of integral one, let $\varepsilon_k\downarrow0$, and set
\begin{equation}
\varphi_k(x):=\varepsilon_k^{-\d}\varphi(x/\varepsilon_k),\qquad
\rho_{0,k}:=\rho_0\ast\varphi_k,\qquad
\mu_k:=\mu\ast\varphi_k.
\end{equation}
Let $\rho_k$ be the corresponding smooth solution and put
\begin{equation}
E_k:=-\nabla\g\ast(\rho_k-\mu_k).
\end{equation}
For every $0<\tau<T$ and every $p\in(1,\infty)$, \cref{prop:hyper} gives
\begin{equation}\label{eq:rho-compactness-bounds}
\sup_k\|\rho_k\|_{L^\infty([\tau,T];L^1\cap L^\infty)}<\infty.
\end{equation}
The usual near--far decomposition of the Coulomb field also gives, uniformly in $k$,
\begin{equation}\label{eq:short-time-field}
\|E_k(t)\|_{L^\infty(\R^\d)}\le C
\begin{cases}
1, & \d=1,\\
1+t^{-(\d-1)/\d}, & \d\ge2,
\end{cases}
\qquad 0<t\le T.
\end{equation}
The right-hand side is integrable at the origin. In addition, Calder\'on--Zygmund estimates and \eqref{eq:rho-compactness-bounds} imply, for every ball $B_R$,
\begin{equation}\label{eq:field-space-compactness}
\sup_k\|E_k\|_{L^\infty([\tau,T];W^{1,p}(B_R))}<\infty.
\end{equation}
Since $\partial_t\rho_k+\div(\rho_kE_k)=0$, differentiation of the Poisson representation yields
\begin{equation}
\partial_tE_k=\nabla(-\Delta)^{-1}\div(\rho_kE_k)
\end{equation}
in $\R^\d$. The operator on the right is of order zero; hence \eqref{eq:rho-compactness-bounds}--\eqref{eq:short-time-field} give
\begin{equation}\label{eq:field-time-compactness}
\sup_k\|\partial_tE_k\|_{L^\infty([\tau,T];L^p(\R^\d))}<\infty.
\end{equation}
The compact embedding $W^{1,p}(B_R)\Subset L^p(B_R)$, together with \eqref{eq:field-time-compactness} and the compactness criterion of \cite{Simon87}, therefore yields, after extraction and a diagonal argument,
\begin{equation}\label{eq:strong-field-convergence}
E_k\longrightarrow E\quad\text{strongly in }C([\tau,T];L^p(B_R))
\end{equation}
for every $\tau,T,R$ and every finite $p$. At the same time, $\rho_k$ converges weak-$*$ in $L^\infty([\tau,T]\times B_R)$ to a nonnegative density $\rho$. Consequently, \eqref{eq:strong-field-convergence} and the weak-$*$ convergence of $\rho_k$ imply
$\rho_kE_k\rightharpoonup\rho E$ locally in spacetime.

It remains to control mass at infinity, identify the limiting field, and recover the initial trace. Choose $\zeta_R\in C_c^\infty(\R^\d)$ with $0\le\zeta_R\le1$, $\zeta_R=1$ on $B_R$, $\zeta_R=0$ outside $B_{2R}$, and $\|\nabla\zeta_R\|_\infty\le C/R$, and set $\chi_R:=1-\zeta_R$. Testing the continuity equation against $\zeta_R$, using mass conservation, and invoking \eqref{eq:short-time-field} give
\begin{equation}\label{eq:tightness-cutoff}
\int\chi_R\rho_k(t)\,dx
\le \int\chi_R\rho_{0,k}\,dx+\frac{C}{R}\int_0^t\|E_k(s)\|_\infty\,ds.
\end{equation}
The mollified initial data are uniformly tight, and the last integral is finite uniformly in $k$; hence $(\rho_k(t))_k$ is uniformly tight on every bounded time interval. Similarly, for $\psi\in C_c^1(\R^\d)$,
\begin{equation}\label{eq:initial-trace-estimate}
\left|\int\psi\rho_k(t)\,dx-\int\psi\rho_{0,k}\,dx\right|
\le\|\nabla\psi\|_\infty\int_0^t\|E_k(s)\|_\infty\,ds,
\end{equation}
whose right-hand side tends to zero with $t$, uniformly in $k$. Passing first $k\to\infty$ and then $t\downarrow0$ gives the initial trace $\rho_t\rightharpoonup\rho_0$ narrowly. The same estimates give $\rho\in C([0,T];\P(\R^\d))$ for the narrow topology.
We briefly spell out the field identification. Fix $0<\tau<T$ and a compact set $K_0\subset\R^\d$. For $\d\ge2$, decompose $-\nabla\g\ast(\rho_k-\mu_k)$ according to the relative displacement into the regions $|z|\le\delta$, $\delta<|z|<R$, and $|z|\ge R$. The near contribution is uniformly $O(\delta)$ on $[\tau,T]\times K_0$, because the positive-time $L^\infty$ bounds for $\rho_k$ and $\mu_k$, together with $\int_{|z|\le\delta}|\nabla\g(z)|\,dz\le C_\d\delta$, give this estimate. For fixed $0<\delta<R$, multiplying $-\nabla\g$ by smooth cutoffs supported in $\{\delta/2<|z|<2R\}$ produces a bounded continuous kernel of compact support, so the local weak-$*$ convergence of $\rho_k$ and the convergence of $\mu_k$ identify the corresponding distributional limit on $(\tau,T)\times K_0$. The far contribution is uniformly $O(R^{1-\d})$ by the mass bounds and the decay $|\nabla\g(z)|\lesssim |z|^{1-\d}$. Sending first $k\to\infty$ and then $\delta\downarrow0$ and $R\uparrow\infty$ identifies the strong local limit. When $\d=1$, the positive-time densities are atomless and the cumulative-mass formula $E_k(t,x)=(\rho_k(t)-\mu_k)((-\infty,x))$ identifies the limit directly from narrow convergence. On $\T^\d$, the same argument uses the decomposition of the periodic Coulomb field into its local singular part and a smooth remainder; compactness is global, and in dimension one the jump of the local kernel is harmless because the limiting measures are atomless. Consequently, \eqref{eq:strong-field-convergence} identifies the local strong limit as
\begin{equation}
E=-\nabla\g\ast(\rho-\mu).
\end{equation}
Passing to the limit in the weak formulation now shows that $\rho$ is a global weak solution in the sense of \cref{def:weak-solution}; it has unit mass and satisfies the bounds in \cref{thm:Cauchy}. On the torus, \eqref{eq:field-space-compactness}--\eqref{eq:strong-field-convergence} hold on the whole domain, and the proof is identical after omitting \eqref{eq:tightness-cutoff}.

We finally record the energy inequality for the constructed solution, which will be used in the later convergence arguments. On every positive time interval on which the Coulomb energy is finite, one has
\begin{equation}\label{eq:weak-energy-inequality}
\mmd^2(\rho_t,\mu)+\int_s^t\dissip(\rho_\tau\mid\mu)\,d\tau
\le\mmd^2(\rho_s,\mu),\qquad t\ge s>0.
\end{equation}
On $\T^\d$ the energy is finite for every $s>0$. On $\R^\d$, \eqref{eq:weak-energy-inequality} holds whenever $\mmd(\rho_s,\mu)<\infty$, in particular for the compactly supported bounded data used in \cref{cor:radialconvergence,thm:whole-space-persistence}. We justify this assertion by restarting the construction at time $s$. By \cref{thm:Cauchy}, $\rho_s\in L^\infty$. Let $\varphi_k$ be a standard approximate identity and set
 $\rho_{s,k}:=\rho_s\ast\varphi_k$ and $\mu_k:=\mu\ast\varphi_k$, with periodic convolution on $\T^\d$. Since convolution by $\varphi_k$ is contractive on $\dot H^{-1}$ and converges strongly to the identity there,
\begin{equation}\label{eq:restart-energy-convergence}
\mmd^2(\rho_{s,k},\mu_k)
\le\mmd^2(\rho_s,\mu),
\qquad
\mmd^2(\rho_{s,k},\mu_k)\longrightarrow\mmd^2(\rho_s,\mu).
\end{equation}
The smooth flows from $\rho_{s,k}$ with targets $\mu_k$ satisfy the energy identity. Their global $L^2$ field bound from \eqref{eq:restart-energy-convergence}, the preceding compactness and field-identification arguments, and weak lower semicontinuity yield a global weak solution $\widetilde\rho$ from $\rho_s$ satisfying \eqref{eq:weak-energy-inequality}. Since $\rho_s$ is bounded, uniqueness in the next subsection identifies $\widetilde\rho_t$ with $\rho_{s+t}$. Restarting at any later positive time of finite energy proves the claim on every such interval.

On $\T^\d$, the original simultaneous-mollification construction also extends the energy inequality to the initial time whenever
\begin{equation}
\mmd^2(\rho_0,\mu)<\infty.
\end{equation}
Indeed, for the approximations $\rho_{0,k}=\rho_0\ast\varphi_k$ and $\mu_k=\mu\ast\varphi_k$ used above,
\begin{equation}
\mmd^2(\rho_{0,k},\mu_k)\longrightarrow\mmd^2(\rho_0,\mu)
\end{equation}
by strong continuity of convolution in $\dot H^{-1}$. The smooth energy identities therefore give a uniform bound for the continuity-equation actions
\begin{equation}
\int_0^t\int_{\T^\d}|E_k|^2\rho_k\,dx\,d\tau.
\end{equation}
The action bound makes the fluxes $j_k:=\rho_kE_k$ weak-$*$ precompact as vector measures on $[0,t]\times\T^\d$. After extraction, the continuity equation passes to the limit, the positive-time field identification above gives $j=\rho E$ almost everywhere on $(0,t)$, and lower semicontinuity of the action yields
\begin{equation}\label{eq:initial-energy-inequality-torus}
\mmd^2(\rho_t,\mu)+\int_0^t\dissip(\rho_\tau\mid\mu)\,d\tau
\le\mmd^2(\rho_0,\mu),
\qquad t>0.
\end{equation}
Since $\rho_t\rightharpoonup\rho_0$ narrowly as $t\downarrow0$ and the Coulomb energy is lower semicontinuous under narrow convergence on the torus, \eqref{eq:initial-energy-inequality-torus} implies
\begin{equation}\label{eq:initial-energy-continuity-torus}
\mmd^2(\rho_t,\mu)\longrightarrow\mmd^2(\rho_0,\mu)
\qquad\text{as }t\downarrow0.
\end{equation}
Moreover, the continuity-equation characterization of absolutely continuous Wasserstein curves \cite[Theorem~8.3.1]{AGS08} and Cauchy--Schwarz give
\begin{equation}\label{eq:initial-action-W2}
W_2^2(\rho_0,\rho_t)
\le t\int_0^t\dissip(\rho_\tau\mid\mu)\,d\tau
\le t\bigl(\mmd^2(\rho_0,\mu)-\mmd^2(\rho_t,\mu)\bigr).
\end{equation}

On $\R^\d$, for compactly supported bounded initial data, the Osgood Lagrangian representation established below and the same near--far field estimate as in \eqref{eq:short-time-field} keep the support and velocity bounded on each finite time interval. Hence $\rho E\in L^\infty([0,T];L^2)$, $\rho-\mu\in W^{1,\infty}([0,T];\dot H^{-1})$, and the Hilbert-space chain rule applies starting at $t=0$. In particular, $\mmd(\rho_s,\mu)\to\mmd(\rho_0,\mu)$ as $s\downarrow0$, and \eqref{eq:weak-energy-inequality} extends to $s=0$.

\subsection{Uniqueness}

We provide an Osgood stability estimate for bounded global weak solutions, which in particular implies uniqueness. The result is reminiscent of the classical Yudovich theory for two-dimensional Euler. Uniqueness in a bounded-density weak class was also obtained independently in \cite{CCCFR26} by a characteristic argument; the proof below uses the modulated energy.

The key ingredient is the following estimate, which can be found, for example, in \cite{SV14}.

\begin{lemma}[Log-Lipschitz regularity of the velocity field]\label{lem:loglip}
    Let $\rho,\mu \in L^1\cap L^\infty(\Omega)$. The velocity field $v = -\nabla \g\ast (\rho - \mu)$ is log-Lipschitz, in the sense that there exists a constant $C$ depending only on $\d$ such that for all $x,y \in \Omega$,
    \begin{equation}\label{eq:loglip_vel}
        |v(x) - v(y)| \le C (\|\rho\|_{L^1\cap L^\infty} + \|\mu\|_{L^1\cap L^\infty}) |x-y| (1+ \indic_{|x-y|<1/e}|\log |x-y||).
    \end{equation}
\end{lemma}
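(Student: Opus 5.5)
Set $f:=\rho-\mu\in L^1\cap L^\infty$, so that $v=-\nabla\g\ast f$ and $\nabla v=-\nabla^2\g\ast f$. The plan is the classical Yudovich argument: split the kernel at the scale $r:=|x-y|$. Write
\begin{equation}
v(x)-v(y)=-\int_\Omega\bigl(\nabla\g(x-z)-\nabla\g(y-z)\bigr)f(z)\,dz .
\end{equation}
On $\R^\d$ we use $|\nabla\g(z)|\le C|z|^{1-\d}$ and $|\nabla^2\g(z)|\le C|z|^{-\d}$. On $\T^\d$ we write $\g=\g_{\mathrm{loc}}+$ a remainder that is smooth on the fundamental cell, where $\g_{\mathrm{loc}}$ is the Euclidean kernel cut off near the origin. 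The remainder contributes at most $C\|f\|_{L^1}|x-y|$. In dimension one the field is bounded by $\|f\|_{L^1}$ and is Lipschitz with constant $\|f\|_{L^\infty}$, since $v'$ is a multiple of $f$ (plus a constant on the torus). So assume $\d\ge2$.

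\emph{Case $r\ge 1/e$.} Here we only need the near--far bound $\|v\|_{L^\infty}\le C(\|f\|_{L^1}+\|f\|_{L^\infty})$. This follows by splitting $\int|\nabla\g(x-z)||f(z)|\,dz$ into the regions $|x-z|\le1$ and $|x-z|>1$. It gives $|v(x)-v(y)|\le 2\|v\|_\infty\le C e\,\|f\|_{L^1\cap L^\infty}\,r$.

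\emph{Case $r<1/e$.} Decompose $\Omega$ into three regions. The near region $\{|x-z|\le 2r\}$ contributes at most
\begin{equation}
\|f\|_{L^\infty}\int_{|x-z|\le2r}\bigl(|\nabla\g(x-z)|+|\nabla\g(y-z)|\bigr)dz\le C\|f\|_{L^\infty}\,r,
\end{equation}
because that ball is contained in $B(y,3r)$. In the intermediate region $\{2r<|x-z|\le1\}$, the mean value theorem gives $|\nabla\g(x-z)-\nabla\g(y-z)|\le C r|x-z|^{-\d}$, since every point $w$ on the segment from $x$ to $y$ satisfies $|w-z|\ge|x-z|/2$. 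Integrating against $\|f\|_{L^\infty}$ yields $C\|f\|_{L^\infty}\,r\int_{2r}^1 s^{-1}\,ds\le C\|f\|_{L^\infty}\,r|\log r|$. In the far region $\{|x-z|>1\}$, the same mean value bound gives $Cr|x-z|^{-\d}\le Cr$, so this part contributes at most $C\|f\|_{L^1}\,r$.

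Summing the three contributions and using $\|f\|_{L^1\cap L^\infty}\le\|\rho\|_{L^1\cap L^\infty}+\|\mu\|_{L^1\cap L^\infty}$ gives \eqref{eq:loglip_vel}.

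The only delicate step is the intermediate annulus, which produces the logarithm. The other point to check is the torus reduction, namely that the remainder $\g-\g_{\mathrm{loc}}$ has bounded second derivatives on the fundamental cell, so that its contribution is Lipschitz with constant $C\|f\|_{L^1}$. Both are standard, and I expect no real obstacle.
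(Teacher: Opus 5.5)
Your argument is correct. It is the classical Yudovich splitting of the kernel at scale $|x-y|$: the near ball is bounded by $\|f\|_{L^\infty}$, the intermediate annulus produces the logarithm through $|\nabla^2\g(z)|\lesssim|z|^{-\d}$, and the far region is controlled by $\|f\|_{L^1}$, with the usual reduction of the periodic kernel to a cut-off Euclidean kernel plus a smooth remainder. The paper gives no proof of this lemma and only cites \cite{SV14}, where the estimate rests on essentially this same argument.
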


We now combine this logarithmic modulus with the modulated-energy identity to obtain the following Osgood stability estimate.

\begin{prop}[Stability]\label{prop:stability}
    Let $\rho_1,\rho_2$ be two global weak solutions to \eqref{eq:PDE}, with the same target $\mu\in\P(\Omega)\cap L^\infty(\Omega)$ and initial data $\rho_{1,0},\rho_{2,0}\in\P(\Omega)\cap L^\infty(\Omega)$, and suppose that both solutions are bounded on $[0,T]\times\Omega$. On $\R^\d$, assume in addition that
    $F(0):=\mmd^2(\rho_{1,0},\rho_{2,0})<\infty$; this assumption is automatic on $\T^\d$ and when the initial data agree. There exist constants $C_T<\infty$ and $\Lambda_T\ge1$, depending only on $T$, $\d$, $\|\rho_1\|_{L^\infty([0,T]\times\Omega)}$, $\|\rho_2\|_{L^\infty([0,T]\times\Omega)}$, $\|\mu\|_{L^\infty}$, and $F(0)$, such that, with
    \begin{equation}
        F(t):=\mmd^2(\rho_1(t),\rho_2(t)),\qquad Y(t):=\frac{F(t)}{\Lambda_T},
    \end{equation}
    one has $0\le Y(t)\le e^{-1}$ and
    \begin{equation}\label{eq:stability_est}
        Y(t)\le e^{1-e^{-C_Tt}}Y(0)^{e^{-C_Tt}},\qquad 0\le t\le T,
    \end{equation}
    with the right-hand side interpreted as zero when $Y(0)=0$. In particular, bounded global weak solutions with the same initial data are unique.
\end{prop}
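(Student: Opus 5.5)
The proof will go through the modulated energy $F(t)=\mmd^2(\rho_1(t),\rho_2(t))=\frac12\|\rho_1-\rho_2\|_{\dot H^{-1}}^2$, in the spirit of Yudovich/Serfaty. Write $v_i=-\nabla\g\ast(\rho_i-\mu)$ and $u:=\nabla\g\ast(\rho_1-\rho_2)=v_2-v_1$. Note that $\mu$ cancels in the difference of fields. The plan has three steps.

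\textbf{Step 1: the energy identity.} First I justify, for bounded weak solutions, the identity
\begin{equation*}
F'(t)=-\int_\Omega \nabla\g\ast(\rho_1-\rho_2)\cdot(\rho_1v_1-\rho_2v_2)\,dx .
\end{equation*}
Since $\rho_i\in L^\infty$ and $v_i$ is bounded and log-Lipschitz (\cref{lem:loglip}), we have $\rho_1-\rho_2\in W^{1,\infty}([0,T];\dot H^{-1})$. On $\R^\d$ this uses $F(0)<\infty$ together with the $L^2$ bound on the flux difference $\rho_1v_1-\rho_2v_2$, exactly as in the chain-rule remark preceding this subsection. The Hilbert-space chain rule then applies.

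Next I rewrite $\rho_1v_1-\rho_2v_2=(\rho_1-\rho_2)v_1-\rho_2u$. This gives
\begin{equation*}
F'=-\int u\cdot v_1\,d(\rho_1-\rho_2)+\int\rho_2|u|^2\,dx .
\end{equation*}
The last term is bounded by $\|\rho_2\|_\infty\|u\|_{L^2}^2=2\|\rho_2\|_\infty F$.

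\textbf{Step 2: the commutator term.} For the first term I use the stress-energy tensor. Because $-\Delta\g\ast(\rho_1-\rho_2)=\rho_1-\rho_2$ (up to the constant on the torus, which integrates against $u\cdot v_1$), we have
\begin{equation*}
\int u\cdot v_1\,d(\rho_1-\rho_2)=\int \nabla v_1:\bigl(u\otimes u-\tfrac12|u|^2I\bigr)\,dx ,
\end{equation*}
up to a term $\int u\cdot v_1$ on $\T^\d$, which is bounded by $C\|u\|_{L^2}$ and can be absorbed. The difficulty is that $\nabla v_1$ is only BMO. To handle this I follow Yudovich and use that $\|\nabla v_1\|_{L^p}\le Cp(\|\rho_1\|_{L^1\cap L^\infty}+\|\mu\|_{L^1\cap L^\infty})$ for $p\ge2$, by Calderón--Zygmund with the sharp linear constant.

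Then Hölder gives
\begin{equation*}
\int|\nabla v_1||u|^2\le Cp\,\|u\|_{L^{2p'}}^2 .
\end{equation*}
Interpolating between $L^2$ and $L^\infty$, with $\|u\|_{L^\infty}\le C$ by the near--far estimate using the $L^\infty$ bounds, yields
\begin{equation*}
\int|\nabla v_1||u|^2\le Cp\,F^{1-1/p}.
\end{equation*}
Optimizing with $p=\log(\Lambda_T/F)$ produces $F'\le C_T F\log(\Lambda_T/F)$ whenever $F\le\Lambda_T e^{-1}$.

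\textbf{Step 3: Osgood integration.} I choose $\Lambda_T\ge1$ large enough, depending on $F(0)$ and an a priori bound $F(t)\le C(F(0)+t)$ obtained from the crude estimate, that $Y=F/\Lambda_T\le e^{-1}$ on $[0,T]$. Then $Y'\le C_TY\log(1/Y)$. Setting $Z=\log(1/Y)$ gives $Z'\ge-C_TZ$, so $Z(t)\ge e^{-C_Tt}Z(0)$, which is \eqref{eq:stability_est}. If $Y(0)=0$, the Osgood condition $\int_0 dr/(r\log(1/r))=\infty$ forces $Y\equiv0$.

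Finally, $F=0$ implies $\rho_1=\rho_2$ as distributions, since $\|\rho_1-\rho_2\|_{\dot H^{-1}}=0$. This gives uniqueness.

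The main obstacle is Step 2 together with the rigorous justification in Step 1 at weak regularity. I expect most care to be needed in the stress-tensor integration by parts on $\R^\d$ with only $u\in L^2$, and in tracking the uniform a priori bound that fixes $\Lambda_T$.
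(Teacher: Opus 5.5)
Your argument is correct. It shares the paper's skeleton: the Hilbert-space chain rule for $F$, normalization by $\Lambda_T$, and Osgood integration through a logarithmic substitution. It controls the commutator term by a different mechanism, however.

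The paper writes $F'=-\int|E|^2\rho_1\,dx+\frac12\iint(v_2(x)-v_2(y))\cdot\nabla\g(x-y)\,df(x)\,df(y)$ and mollifies $v_2$ at scale $\varepsilon$. It then proceeds in three steps:
\begin{itemize}
\item it uses \cref{lem:loglip} to get $\|\nabla(v_2\ast\chi_\varepsilon)\|_\infty\lesssim1+|\log\varepsilon|$ and $\|v_2-v_2\ast\chi_\varepsilon\|_\infty\lesssim\varepsilon(1+|\log\varepsilon|)$;
\item it applies an external Lipschitz commutator estimate to the smooth part and bounds the rough remainder by $\|v_2-v_2\ast\chi_\varepsilon\|_\infty\int|E|\,d|f|$;
\item it takes $\varepsilon=Y(t)$.
\end{itemize}

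You keep $v_1$ unmollified and write the commutator exactly as $\int\nabla v_1:(u\otimes u-\frac12|u|^2I)\,dx$. You then replace the missing $L^\infty$ bound on $\nabla v_1$ by the Calder\'on--Zygmund bound $\|\nabla v_1\|_{L^p}\lesssim p$, combined with $L^2$--$L^\infty$ interpolation of $u$ and the choice $p\sim\log(1/Y)$. This is Yudovich's original device.

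Your version is more self-contained. For bounded densities the stress-tensor identity is elementary: a spatial cutoff suffices on $\R^\d$, since $v_1\in L^\infty$ and the tensor is integrable. No mollification, log-Lipschitz lemma, or black-box commutator theorem is needed. It also gives a bound of the form $Y(t)\le Y(0)^{e^{-C_Tt}}$, which implies \eqref{eq:stability_est}. The paper's version instead reuses the log-Lipschitz modulus it already recorded and isolates the Lipschitz commutator estimate, the standard modulated-energy building block. That is why it transfers to the torus by citing an existing stress-energy construction.

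There are three small points, none of which affects the conclusion.
\begin{itemize}
\item The sign in Step~1 is reversed. The continuity equation gives $F'=\int u\cdot(\rho_1v_1-\rho_2v_2)\,dx$, so $\int\rho_2|u|^2$ enters with a minus sign and can simply be dropped, as in the paper's $-\int|E|^2\rho_1\le0$. Since you bound that term in absolute value, nothing breaks.
\item On $\T^\d$ there is no correction term at all, because $-\Delta\g\ast f=f-\int f=f$ for $f=\rho_1-\rho_2$. Your stated justification would have been wrong had such a term existed. A term of size $C\|u\|_{L^2}\sim\sqrt F$ cannot be absorbed into $F\log(1/F)$ (compare $y'=\sqrt y$), and it would destroy uniqueness.
\item The linear-in-$p$ Calder\'on--Zygmund constant is only valid for $p\ge2$, so take $p=\max\{2,\log(\Lambda_T/F)\}$. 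On the range $Y\in[e^{-2},e^{-1}]$ the bound is then simply $C_TF$. The a priori bound fixing $\Lambda_T$ follows from $|F'|\le\|u\|_{L^\infty}\|\rho_1v_1-\rho_2v_2\|_{L^1}\le C$.
\end{itemize}
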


\begin{proof}
Set $f:=\rho_1-\rho_2$, $E:=-\nabla\g\ast f$, and $v_i:=-\nabla\g\ast(\rho_i-\mu)$. Since $\rho_i$ and $v_i$ are bounded on $[0,T]$, the fluxes $\rho_iv_i$ belong to $L^\infty([0,T];L^2)$, and the continuity equations imply that $f$ is absolutely continuous with values in $\dot H^{-1}$. The Hilbert-space chain rule therefore justifies, for almost every $t$,
\begin{align}
F'(t)
&=-\int_\Omega |E|^2\rho_1\,dx
 +\int_\Omega v_2\cdot\nabla\g\ast f\,df \notag\\
&\le \frac12\iint_{\Omega^2}(v_2(x)-v_2(y))\cdot\nabla\g(x-y)\,df(x)\,df(y).\label{eq:stability-commutator}
\end{align}
For $\d=1$, $E'=f$, and the second term in \eqref{eq:stability-commutator} is
\begin{equation}
-\int_\Omega v_2E f\,dx=\frac12\int_\Omega (\partial_xv_2)E^2\,dx,
\end{equation}
so its absolute value is bounded by $C F(t)$. We henceforth assume $\d\ge2$.

Let $\chi_\varepsilon$ be a standard mollifier, with $0<\varepsilon<e^{-1}$. By \cref{lem:loglip},
\begin{equation}\label{eq:mollified-loglip}
\|\nabla(v_2\ast\chi_\varepsilon)\|_\infty
\le C_T(1+|\log\varepsilon|),\qquad
\|v_2-v_2\ast\chi_\varepsilon\|_\infty
\le C_T\varepsilon(1+|\log\varepsilon|).
\end{equation}
At each time, $f\in L^1\cap L^\infty\cap\dot H^{-1}$.
Therefore, the extension-by-density formulation of the continuum Coulomb commutator estimate of
\cite[Proposition~3.2, (3.18), and Remark~3.3]{RS25commutator} may be applied on $\R^\d$ to obtain
\begin{equation}\label{eq:RS-commutator-use}
\left|\iint\big((v_2\ast\chi_\varepsilon)(x)-(v_2\ast\chi_\varepsilon)(y)\big)
\cdot\nabla\g(x-y)\,df(x)\,df(y)\right|
\le C\|\nabla(v_2\ast\chi_\varepsilon)\|_\infty F(t).
\end{equation}
The same estimate on the torus follows from the periodic stress-energy construction in
\cite[Section~6.2]{CdCRS25}. The remaining term is bounded by
\begin{equation}
\|v_2-v_2\ast\chi_\varepsilon\|_\infty
\int_\Omega |E|\,d|f|
\le C_T\varepsilon(1+|\log\varepsilon|),
\end{equation}
because $f$ is uniformly bounded in $L^1\cap L^\infty$ and the Coulomb field of such a density is uniformly bounded. Combining these estimates with \eqref{eq:stability-commutator} gives
\begin{equation}\label{eq:osgood-pre-normalized}
F'(t)\le C_TF(t)(1+|\log\varepsilon|)
+C_T\varepsilon(1+|\log\varepsilon|)
\end{equation}
for almost every $t\in[0,T]$.

Taking first $\varepsilon=e^{-1}$ in \eqref{eq:osgood-pre-normalized} gives
$F(t)\le(F(0)+1)e^{C_Tt}$. Increase $C_T$ if necessary and set
\begin{equation}
\Lambda_T:=e(F(0)+1)e^{C_TT}.
\end{equation}
Then $Y=F/\Lambda_T\le e^{-1}$. At times for which $Y(t)>0$, choose $\varepsilon=Y(t)$ in \eqref{eq:osgood-pre-normalized}; after division by $\Lambda_T$ and enlargement of $C_T$, we obtain
\begin{equation}\label{eq:osgood-normalized}
Y'(t)\le C_TY(t)(1-\log Y(t)).
\end{equation}
For $Z(t):=1-\log Y(t)$ this means $Z'(t)\ge-C_TZ(t)$. Hence
\begin{equation}
1-\log Y(t)\ge e^{-C_Tt}(1-\log Y(0)),
\end{equation}
which is equivalent to \eqref{eq:stability_est}. The case $Y(0)=0$ follows by applying the same estimate to $Y(0)+\eta$ and sending $\eta\downarrow0$. This proves uniqueness.
\end{proof}

\subsection{Regularity} 

It is known that \eqref{eq:PDE} propagates the H\"older regularity of the initial datum when the target is also H\"older continuous (see \cite{BV25}). A natural question is whether the corresponding H\"older seminorms can nevertheless remain uniformly controlled in time. The following explicit counterexample gives a negative answer: the density remains zero at the origin while attaining an order-one value at an exponentially small radius, forcing exponential growth of its H\"older seminorm.

Let $\omega_\d$ denote the volume of the unit ball in $\R^\d$, and $\sigma_\d = \d\omega_\d$ be the surface area of the unit sphere.

\begin{prop}\label{prop:regularitycounterex}
Fix $\d\ge1$. There exist radial probability densities $\mu,\rho_0\in C_c^\infty(\R^\d)\subset \mathcal S(\R^\d)$
such that the radial solution $\rho(t)$ of \eqref{eq:PDE} satisfies
\begin{equation}
\sup_{t\ge 0} \|\rho(t)\|_{H^s(\R^\d)} = \infty
\qquad\text{for every } s>\frac \d2.
\end{equation}
More precisely, with $\lambda:=\mu(0)>0$, there exists $t_0<\infty$ depending on the data such that, for every $\beta\in(0,1]$, there exists $c_\beta>0$ for which
\begin{equation}
[\rho(t)]_{C^{0,\beta}(\R^\d)} \ge c_\beta\, e^{\beta\lambda\frac{\d+2}{2\d}t}
\qquad\text{for all } t\ge t_0.
\end{equation}
\end{prop}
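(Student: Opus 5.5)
The plan is to work in the radial class, where \eqref{eq:PDE} reduces to an ODE along characteristics. We choose $\mu\in C_c^\infty$ radial, constant equal to $\lambda$ on a ball $B_{r_*}$. We choose $\rho_0\in C_c^\infty$ radial with $\rho_0(x)=a|x|^2$ on $B_{r_*}$ for a small $a>0$, and normalize both to be probability densities. Uniqueness in \cref{prop:stability} identifies the solution with the radial one, and the Osgood flow of \cref{lem:loglip} supplies Lagrangian trajectories $X(t,x_0)$.

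\textbf{Radial reduction.} For radial data, Gauss's law gives
\[
v(t,x)=\frac{M_\rho(t,|x|)-M_\mu(|x|)}{\sigma_\d|x|^{\d-1}}\,\frac{x}{|x|},
\]
where $M_\nu(r):=\nu(B_r)$. Along characteristics the enclosed mass is conserved, $M_\rho(t,|X(t,x_0)|)=M_{\rho_0}(|x_0|)$. Since $\div v=\rho-\mu$, the density satisfies the logistic-type ODE
\[
\frac{d}{dt}\rho(t,X)=-\rho(t,X)\bigl(\rho(t,X)-\mu(X)\bigr).
\]
In particular $\rho(t,0)=0$ for all $t$.

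\textbf{Near-origin dynamics.} We fix a small initial radius $r_0$ and follow $R(t):=|X(t,x_0)|$ with $|x_0|=r_0$. The enclosed mass $m_0=M_{\rho_0}(r_0)\sim a r_0^{\d+2}$ is tiny. As long as $R\le r_*$,
\[
\dot R=\frac{m_0}{\sigma_\d R^{\d-1}}-\frac{\lambda R}{\d}.
\]
While $m_0\ll\lambda\omega_\d R^\d$, this gives $R(t)\approx r_0e^{-\lambda t/\d}$. More precisely, $R(t)\ge r_0e^{-\lambda t/\d}$, and $R$ stays above the equilibrium radius $(\d m_0/(\lambda\sigma_\d))^{1/\d}\sim a^{1/\d}r_0^{1+2/\d}$.

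Along the same trajectory $\mu(X)=\lambda$, so the density solves the logistic equation exactly:
\[
\rho(t)=\frac{\lambda\,a r_0^2e^{\lambda t}}{\lambda+a r_0^2(e^{\lambda t}-1)}.
\]
Choose $r_0=r_0(t)$ with $a r_0^2e^{\lambda t}=\lambda$. Then $\rho(t,X)\ge\lambda/2$, and we need the radius to satisfy
\[
R(t)\lesssim r_0e^{-\lambda t/\d}\sim e^{-\lambda t/2}e^{-\lambda t/\d}=e^{-\lambda\frac{\d+2}{2\d}t}.
\]
Since $\rho(t,0)=0$, this yields
\[
[\rho(t)]_{C^{0,\beta}}\ge\frac{\lambda/2}{R(t)^\beta}\ge c_\beta\,e^{\beta\lambda\frac{\d+2}{2\d}t}.
\]
The $H^s$ blow-up for $s>\d/2$ then follows from the Sobolev embedding into $C^{0,\beta}$ with $\beta=\min(s-\d/2,1)$, or a slightly smaller $\beta$ in borderline cases.

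\textbf{Main obstacle: the two-sided bound on $R(t)$.} The step I expect to be hardest is proving $R(t)\le Cr_0e^{-\lambda t/\d}$, i.e. that the repulsive term $m_0/(\sigma_\d R^{\d-1})$ does not slow the contraction by more than a constant factor. On $[0,t]$ one needs $m_0\ll\lambda R^\d$. In exponents this reads
\[
r_0^{\d+2}\ll r_0^\d e^{-\lambda t},\quad\text{i.e.}\quad r_0^2e^{\lambda t}\ll1,
\]
which is only marginally satisfied at our choice $r_0^2e^{\lambda t}=\lambda/a$. I would handle this by picking $a$ small, so that $\rho_0\le a|x|^2$ is small, and by computing $\frac{d}{dt}\log R=-\frac{\lambda}{\d}+\frac{m_0}{\sigma_\d R^\d}$. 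Integrating, the error is $\int_0^t m_0/R^\d$, which is bounded by $Ca\,r_0^2e^{\lambda t}=O(1)$ via a bootstrap on $R\ge r_0e^{-\lambda t/\d}/2$. This gives $R\le Cr_0e^{-\lambda t/\d}$ with $C$ independent of $t$.

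We also need the trajectory never to leave $B_{r_*}$, where $\mu\equiv\lambda$; this holds because $\dot R<0$ as long as $R$ is above equilibrium. It also needs $\rho$ to stay bounded, which \cref{prop:hyper} provides. The threshold time $t_0$ is where $r_0(t)$ first drops below $r_*$.
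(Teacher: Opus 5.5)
Your proposal is correct and is essentially the paper's argument. It uses the same data, the same radial Gauss-law shell ODE and $\rho(t,0)=0$, and in fact the very same shell: your choice $a r_0^2e^{\lambda t}=\lambda$ selects exactly the shell with the paper's mass label $a_t$. Your logistic formula then gives $\rho=\lambda/(2-e^{-\lambda t})$, just as the paper's Jacobian identity $\rho(t,R_t(a))=(\omega_\d\,\partial_aU_t(a))^{-1}$ does.

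The step you flag as the main obstacle is harmless. $U=R^\d$ solves the linear ODE $\dot U=m_0/\omega_\d-\lambda U$, which the paper integrates explicitly. Even your own estimate gives $\int_0^t m_0/(\sigma_\d R^\d)\,ds\le a r_0^2e^{\lambda t}/((\d+2)\lambda)=1/(\d+2)$ using only the bound $R(s)\ge r_0e^{-\lambda s/\d}$, which holds directly from $\dot R\ge-\lambda R/\d$. So neither smallness of $a$ nor a bootstrap is needed.
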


\begin{proof}
\ul{Step 1: choice of data.}
Choose radial cutoffs $\psi,\varphi\in C_c^\infty([0,\infty))$ such that
\begin{equation}
\psi\equiv 1 \text{ on } [0,1],\quad \operatorname{supp}\psi\subset [0,2],
\qquad
\varphi\equiv 1 \text{ on } [0,\tfrac12],\quad \operatorname{supp}\varphi\subset [0,1].
\end{equation}
Set
\begin{equation}
\mu(x):=c_\mu\,\psi(|x|),
\qquad
\rho_0(x):=c_0 |x|^2\varphi(|x|),
\end{equation}
where $c_\mu,c_0>0$ are chosen so that $\int_{\R^\d}\mu=\int_{\R^\d}\rho_0=1$.
Let $\lambda := c_\mu = \mu(0)$ and $\alpha := c_0$.
Then
\begin{equation}\label{eq:data-flat}
\mu(r)=\lambda \quad (0\le r\le 1),
\qquad
\rho_0(r)=\alpha r^2 \quad (0\le r\le \tfrac12).
\end{equation}

\medskip
\noindent
\ul{Step 2: radial reduction.}
For a radial density $\rho(t,r)$ define its cumulative mass
\begin{equation}
m_\rho(t,r):=\int_{B_r} \rho(t,x)\,dx,
\qquad
m_\mu(r):=\int_{B_r} \mu(x)\,dx.
\end{equation}
Since radiality is preserved by \eqref{eq:PDE}, we may write the velocity field as $v(t,x)=u(t,|x|)\frac{x}{|x|}$.
Using $\div v(t) = \rho(t)-\mu$ and the divergence theorem, we get for every $r>0$,
\begin{equation}\label{eq:gauss}
\sigma_\d r^{\d-1} u(t,r)
= \int_{\partial B_r} v(t)\cdot n\,dS
= \int_{B_r} (\rho(t)-\mu)\,dx
= m_\rho(t,r)-m_\mu(r).
\end{equation}
Hence
\begin{equation}\label{eq:u}
u(t,r)=\frac{m_\rho(t,r)-m_\mu(r)}{\sigma_\d r^{\d-1}}.
\end{equation}
Differentiating $m_\rho$ in time and using the continuity equation yields
\begin{align}
    \p_t m_\rho(t,r)
    &= \int_{B_r} \p_t \rho(t,x)\,dx
    = -\int_{\partial B_r} \rho(t) v(t)\cdot n\,dS
    = -\sigma_\d r^{\d-1}\rho(t,r)u(t,r)
    \notag\\
    &= -u(t,r)\,\p_r m_\rho(t,r). \label{eq:m-pde}
\end{align}
Thus, $m_\rho$ satisfies the scalar transport equation
\begin{equation}\label{eq:m-transport}
    \p_t m_\rho + u\,\p_r m_\rho = 0.
\end{equation}

Let $R_t(a)$ be the mass-shell trajectory determined by
\begin{equation}\label{eq:R-ode}
\dot R_t(a)=u(t,R_t(a)),
\qquad
R_0(a)=m_\rho(0,\cdot)^{-1}(a),
\qquad a\in(0,1).
\end{equation}
Then \eqref{eq:m-transport} gives $\frac{d}{dt} m_\rho(t,R_t(a)) = 0$, so $m_\rho(t,R_t(a)) = a$.
Combining \eqref{eq:u} and this conservation of mass, we obtain the closed shell ODE
\begin{equation}\label{eq:R-closed}
\dot R_t(a)=\frac{a-m_\mu(R_t(a))}{\sigma_\d R_t(a)^{\d-1}}.
\end{equation}
Now define $U_t(a):=R_t(a)^\d$.
Then \eqref{eq:R-closed} becomes
\begin{equation}\label{eq:U-general}
\dot U_t(a) = \frac{\d}{\sigma_\d}\bigl(a-m_\mu(U_t(a)^{1/\d})\bigr).
\end{equation}
Since $\sigma_\d = \d\omega_\d$, the factor $\d/\sigma_\d$ is just $1/\omega_\d$. 
Differentiating $m_\rho(t,R_t(a)) = a$ in $a$ gives $1 = \p_r m_\rho(t,R_t(a))\,\p_a R_t(a) = \sigma_\d R_t(a)^{\d-1}\rho(t,R_t(a))\,\p_a R_t(a)$.
Since $\p_a U_t(a) = \d R_t(a)^{\d-1}\p_a R_t(a)$, this yields the exact identity
\begin{equation}\label{eq:density-U}
\rho(t,R_t(a)) = \frac{1}{\omega_\d\,\p_a U_t(a)}.
\end{equation}

\medskip
\noindent
\ul{Step 3: explicit solution for small shells.}
From \eqref{eq:data-flat}, for $0\le r\le 1$ we have $m_\mu(r)=\lambda\omega_\d r^\d$.
Hence, as long as $R_t(a)\le 1$,
\begin{equation}\label{eq:U-linear}
\dot U_t(a)=\frac{a}{\omega_\d}-\lambda U_t(a).
\end{equation}
On the other hand, by \eqref{eq:data-flat}, for $0\le r\le \tfrac12$,
\begin{equation}
a = m_\rho(0,r)=\alpha\sigma_\d\int_0^r s^{\d+1}\,ds = \frac{\alpha\sigma_\d}{\d+2} r^{\d+2}.
\end{equation}
Therefore, for shells initially lying in $[0,\tfrac12]$,
\begin{equation}\label{eq:U0}
U_0(a)=R_0(a)^\d = \Gamma a^\theta,
\qquad
\theta:=\frac{\d}{\d+2},
\qquad
\Gamma:=\Bigl(\frac{\d+2}{\alpha\sigma_\d}\Bigr)^\theta.
\end{equation}
Choose $a_* := \min\Bigl\{ m_\rho(0,\tfrac12),\, \omega_\d\lambda 2^{-\d}\Bigr\} >0$.
Solving \eqref{eq:U-linear} with initial datum \eqref{eq:U0} gives for all $a \in (0, a_*]$,
\begin{equation}\label{eq:U-explicit}
U_t(a) = e^{-\lambda t}\Gamma a^\theta + (1-e^{-\lambda t})\frac{a}{\omega_\d\lambda}.
\end{equation}
Differentiating in $a$ gives
\begin{equation}\label{eq:dUda}
\p_a U_t(a) = \frac{1-e^{-\lambda t}}{\omega_\d\lambda} + e^{-\lambda t}\Gamma\theta a^{-\frac{2}{\d+2}}.
\end{equation}
For each fixed $t\ge0$, \eqref{eq:U-explicit} gives $U_t(a)\to0$ and hence $R_t(a)\to0$ as $a\downarrow0$, while \eqref{eq:dUda} and \eqref{eq:density-U} give $\rho(t,R_t(a))=(\omega_\d\partial_aU_t(a))^{-1}\to0$. Since the smooth radial solution is continuous at the origin, it follows that
\begin{equation}\label{eq:rho-origin}
    \rho(t,0)=0 \quad \text{for every } t\ge 0.
\end{equation}

\medskip
\noindent
\ul{Step 4: a shell with order-one density at exponentially small radius.}
Define
\begin{equation}
a_t := (\Gamma\theta\omega_\d\lambda)^{\frac{\d+2}{2}} e^{-\lambda\frac{\d+2}{2}t}.
\end{equation}
For large $t$, $a_t \le a_*$, and substitution in \eqref{eq:dUda} gives
\begin{equation}
\p_a U_t(a_t)=\frac{2-e^{-\lambda t}}{\omega_\d\lambda}.
\end{equation}
Using \eqref{eq:density-U},
\begin{equation}\label{eq:rho-lower}
\rho(t,R_t(a_t)) = \frac{\lambda}{2-e^{-\lambda t}} \ge \frac\lambda2
\qquad\text{for all } t\ge t_0.
\end{equation}
Now set $r_t := R_t(a_t)$. Since $U_t(a_t) = r_t^\d$, using \eqref{eq:U-explicit} we find $r_t \asymp e^{-\lambda\frac{\d+2}{2\d}t}$.

\medskip
\noindent
\ul{Step 5: blow-up of H\"older and Sobolev norms.}
Fix any $\beta\in(0,1]$. By \eqref{eq:rho-origin}, \eqref{eq:rho-lower}, and the scaling of $r_t$,
\begin{equation}\label{eq:holder-blowup}
[\rho(t)]_{C^{0,\beta}(\R^\d)}
\ge \frac{|\rho(t,r_t)-\rho(t,0)|}{r_t^\beta}
\ge \frac{\lambda/2}{r_t^\beta}
\ge c_\beta e^{\beta\lambda\frac{\d+2}{2\d}t}
\qquad (t\ge t_0).
\end{equation}
For any $s>\d/2$, choosing $\beta\in(0,\min\{1,s-\d/2\})$ and using $H^s(\R^\d)\hookrightarrow C^{0,\beta}(\R^\d)$, we conclude that $\|\rho(t)\|_{H^s(\R^\d)} \to \infty$ as $t\to\infty$.
\end{proof}

\begin{remark}
The same computation shows that $\rho(t,r)\sim \alpha e^{\lambda\frac{\d+2}{\d}t} r^2$ as $r\downarrow 0$, meaning the quadratic vanishing at the origin persists but its coefficient grows exponentially.
\end{remark}

Together with the existence and uniqueness results above, \cref{prop:regularitycounterex} completes the proof of \cref{thm:Cauchy}.

\section{Convergence and PL coercivity on \texorpdfstring{$\T^\d$}{the torus}}\label{sec:torus-coercivity}

On the torus $\T^\d$, we first prove \cref{thm:bounded-contrast-PL,cor:bounded-contrast-integral-PL}: an intrinsic metric PL inequality for finite-energy sources under a target density-ratio condition, including the uniform target as a special case, and its integral-dissipation form for sources with a $C^1$ Coulomb potential. We next prove \cref{thm2}: if
\begin{equation}
\lambda:=\operatorname*{ess\,inf}_{\T^\d}\mu>0,
\end{equation}
then, after any positive waiting time, $\mmd^2(\rho_t,\mu)$ decays exponentially for arbitrary probability-measure initial data, without any lower bound on the evolving density and without imposing the target density-ratio condition.

We finally prove \cref{prop:counterexvanishingmu,prop:counterexamplePLinfmu}, which exhibit two distinct limitations of PL coercivity on the torus. The first gives a fixed target, vanishing at a single point, for which no global PL inequality holds; here global refers to an inequality valid for every admissible source $\rho$, without a local condition relating $\rho$ to $\mu$. The second shows that, for every $\d\ge2$ and $0<\lambda<1$, no PL constant depending only on $(\d,\lambda)$ can hold uniformly over all target densities satisfying $\mu\ge\lambda$. The target density-ratio result is consistent with this second obstruction because its constant also depends on an upper bound for the target.

These periodic obstructions arise from target degeneracy or lack of uniform control over the target class. They are distinct from the spatial-infinity mechanism on $\R^\d$ developed in \cref{sec:whole-space-obstructions}, where the target is fixed and the source is placed far away.

\subsection{PL coercivity near the uniform target}

We begin with the uniform target. The point is that the metric PL constant below is valid for every finite-energy source, while its integral-dissipation form is independent of any positive lower bound or upper bound for the source density whenever the source potential is $C^1$.

The proof first establishes the integral estimate for bounded sources and the uniform target. In dimension one, this follows directly from periodic integration by parts. In higher dimensions, we rewrite the dissipation as a Dirichlet energy minus a cubic correction. If the required bound on this correction failed, a small two-sided barrier would produce a smooth positive maximizer of a penalized variational problem. Evaluating its Euler equation at a point of minimum density and using the Hessian trace inequality yields a contradiction. Heat regularization removes the auxiliary smoothness and positivity assumptions, and a final normalization reduces the general target to the uniform case. Starting the Cauchy flow from a general finite-energy source then gives the metric-slope inequality, and a first-order expansion of a $C^1$ Coulomb potential recovers the integral form.

\begin{proof}[Proof of \cref{thm:bounded-contrast-PL,cor:bounded-contrast-integral-PL}]
We first establish the dissipation-to-energy comparison in \eqref{eq:bounded-contrast-PL} when $\rho,\mu\in\P(\T^\d)\cap L^\infty(\T^\d)$. This bounded-source estimate will then serve as the positive-time input for the metric-slope argument.
If $\d=1$, set $E=-\partial_x\g\ast(\rho-1)$. Then $E'=\rho-1$, and periodic integration by parts gives
\begin{equation}
\dissip(\rho\mid1)
=\int_{\T}E^2(1+E')\,dx
=\int_{\T}E^2\,dx
=2\mmd^2(\rho,1).
\end{equation}
We may therefore assume that $\d\ge2$.

Fix $p>2\d$. This choice ensures both $W^{2,p}(\T^\d)\Subset C^1(\T^\d)$, as needed for compactness, and $W^{2,p/2}(\T^\d)\hookrightarrow C^{1,\gamma}(\T^\d)$ for some $\gamma>0$, as needed to bootstrap the Euler equation below. For every nonnegative $\rho\in L^p(\T^\d)$ with $\int_{\T^\d}\rho\,dx=1$, let $h_\rho$ be the mean-zero solution of
\begin{equation}
-\Delta h_\rho=\rho-1,
\end{equation}
and set
\begin{equation}
\Ec(\rho):=\int_{\T^\d}|\nabla h_\rho|^2\,dx =2\mmd^2(\rho,1),
\qquad
\mathcal C(\rho):=\int_{\T^\d}(1-\rho)|\nabla h_\rho|^2\,dx.
\end{equation}
Since
\begin{equation}\label{eq:uniform-cubic-decomposition}
\dissip(\rho\mid1)=\Ec(\rho)-\mathcal C(\rho),
\end{equation}
it suffices to prove
\begin{equation}\label{eq:uniform-cubic-target}
\mathcal C(\rho)\le \alpha_\d\Ec(\rho),
\qquad
\alpha_\d:=\frac{\d-1}{\d}.
\end{equation}

There is a useful formal extremal heuristic behind \eqref{eq:uniform-cubic-target}. For nonconstant $\rho$,
\begin{equation}
\frac{\dissip(\rho\mid1)}{\Ec(\rho)}
=
\frac{\int_{\T^\d}\rho|\nabla h_\rho|^2\,dx}
{\int_{\T^\d}|\nabla h_\rho|^2\,dx},
\end{equation}
so the estimate asserts that the density-weighted field energy retains at least a fraction $1/\d$ of the full Dirichlet energy. Although such a conclusion would be false for an arbitrary pair $(\rho,h)$, here the two are coupled by $\rho=1-\Delta h$; in particular, $\rho\ge0$ gives the one-sided curvature constraint $\Delta h\le1$. Formally, if the quotient $\mathcal C/\Ec$ admitted a smooth positive nonconstant maximizer with value $\ell$, its Euler equation would give
\begin{equation}
|D^2h|^2-(\Delta h)^2+\ell\Delta h=0.
\end{equation}
At a minimum point $x_0$ of $\rho$, setting $\tau:=\Delta h(x_0)=1-\rho(x_0)\in(0,1)$ and using $|D^2h|^2\ge(\Delta h)^2/\d$ would yield
\begin{equation}
0\ge\tau\bigl(\ell-\alpha_\d\tau\bigr),
\end{equation}
and hence $\ell<\alpha_\d$. Thus $\alpha_\d=1-1/\d$ is precisely the constant furnished by the Hessian trace inequality together with the constraint $\Delta h\le1$. The penalization below makes this formal ``worst offender'' argument rigorous: it produces a smooth interior maximizer, while the barrier contributes with the favorable sign at a point of minimum density.

We first prove \eqref{eq:uniform-cubic-target} for smooth strictly positive densities. Suppose, to the contrary, that such a density $\rho^\circ$ satisfies
\begin{equation}
\mathcal C(\rho^\circ)>\alpha_\d\Ec(\rho^\circ).
\end{equation}
The potential is nonconstant, and strict positivity gives
\begin{equation}
\dissip(\rho^\circ\mid1)
\ge \bigl(\min_{\T^\d}\rho^\circ\bigr)\Ec(\rho^\circ)>0.
\end{equation}
Consequently, by \eqref{eq:uniform-cubic-decomposition}, we may choose
\begin{equation}\label{eq:uniform-ell-choice}
\alpha_\d<\ell<\frac{\mathcal C(\rho^\circ)}{\Ec(\rho^\circ)}<1.
\end{equation}

To obtain a smooth interior maximizer, we introduce a two-sided barrier whose $s^p$ term controls concentration and whose $s^{-1}$ term penalizes approach to zero. For $s>0$, define
\begin{equation}\label{eq:uniform-barrier}
\Phi(s):=s^p-1-p(s-1)+s^{-1}-1+(s-1).
\end{equation}
The affine corrections subtract the tangent lines of $s^p$ and $s^{-1}$ at $s=1$, thereby normalizing the barrier at the uniform density. We extend $\Phi$ by $+\infty$ on $(-\infty,0]$. Then $\Phi\ge0$,
\begin{equation}\label{eq:uniform-barrier-properties}
\Phi(1)=\Phi'(1)=0,
\qquad
\Phi''(s)=p(p-1)s^{p-2}+2s^{-3}>0,
\end{equation}
and $\Phi':(0,\infty)\to\R$ is a smooth increasing bijection. Introduce
\begin{equation}
\mathcal X_p:=
\left\{
\rho\in L^p(\T^\d):
\rho>0\ \text{a.e.},\ 
\int_{\T^\d}\rho\,dx=1,\ 
\int_{\T^\d}\rho^{-1}\,dx<\infty
\right\}.
\end{equation}
For $\rho\in\mathcal X_p$, put
\begin{equation}
\mathcal P(\rho):=\int_{\T^\d}\Phi(\rho)\,dx.
\end{equation}
For $\varepsilon>0$, consider
\begin{equation}\label{eq:uniform-penalized-functional}
\mathcal F_{\varepsilon,\ell}(\rho)
:=\mathcal C(\rho)-\ell\Ec(\rho)-\varepsilon\mathcal P(\rho).
\end{equation}
Choose $\varepsilon$ sufficiently small that
$\mathcal F_{\varepsilon,\ell}(\rho^\circ)>0$.

We claim that \eqref{eq:uniform-penalized-functional} has a nonconstant maximizer in $\mathcal X_p$. On the mass-one class, the linear terms in \eqref{eq:uniform-barrier} cancel and
\begin{equation}\label{eq:uniform-penalty-identity}
\mathcal P(\rho)=\int_{\T^\d}\bigl(\rho^p+\rho^{-1}-2\bigr)\,dx.
\end{equation}
Jensen's inequality applied to $s\mapsto s^{-1}$ gives
\begin{equation}
\|\rho\|_{L^p}^p\le \mathcal P(\rho)+1.
\end{equation}
Moreover, Plancherel's theorem and H\"older's inequality imply
\begin{equation}\label{eq:uniform-energy-penalty-control}
\Ec(\rho)=\|\rho-1\|_{\dot H^{-1}}^2\le C_\d\|\rho-1\|_{L^2}^2\le C_{\d,p}\left(1+(1+\mathcal P(\rho))^{2/p}\right).
\end{equation}
Since $\mathcal C(\rho)\le\Ec(\rho)$ and $2/p<1$, it follows that
\begin{equation}
\mathcal F_{\varepsilon,\ell}(\rho)
\le C_{\d,p,\ell}\left(1+(1+\mathcal P(\rho))^{2/p}\right)
-\varepsilon\mathcal P(\rho),
\end{equation}
whose right-hand side tends to $-\infty$ as $\mathcal P(\rho)\to\infty$.

Let $(\rho_n)$ be a maximizing sequence. Passing to a subsequence, we may assume that
$\rho_n\rightharpoonup\bar\rho$ weakly in $L^p$. The mass constraint passes to the limit, and weak lower semicontinuity of the convex integral functional $\mathcal P$ gives
\begin{equation}
\mathcal P(\bar\rho)\le\liminf_{n\to\infty}\mathcal P(\rho_n)<\infty.
\end{equation}
Thus $\bar\rho\in\mathcal X_p$. The elliptic estimate for $h_{\rho_n}$ and the compact embedding $W^{2,p}(\T^\d)\Subset C^1(\T^\d)$ give
\begin{equation}
h_{\rho_n}\longrightarrow h_{\bar\rho}
\quad\text{strongly in }C^1(\T^\d).
\end{equation}
It follows that $\Ec(\rho_n)\to\Ec(\bar\rho)$ and, using the weak convergence of $\rho_n$ together with uniform convergence of $|\nabla h_{\rho_n}|^2$, that
$\mathcal C(\rho_n)\to\mathcal C(\bar\rho)$. Hence $\bar\rho$ is a maximizer for \(\mathcal F_{\varepsilon,\ell}\). Since $\rho\equiv1$ gives value zero while the supremum is positive, every maximizer is nonconstant.

Fix such a maximizer $\bar\rho$, abbreviate $h:=h_{\bar\rho}$, and set
\begin{equation}\label{eq:uniform-euler-source}
A:=|D^2h|^2-(\Delta h)^2+\ell\Delta h.
\end{equation}
The identity
\begin{equation}
\int_{\T^\d}|D^2h|^2\,dx
=\int_{\T^\d}(\Delta h)^2\,dx
\end{equation}
and $\int_{\T^\d}\Delta h\,dx=0$ imply $\int_{\T^\d}A\,dx=0$. Let $w$ be the mean-zero solution of
\begin{equation}
-\Delta w=A.
\end{equation}

For $\zeta\in C^\infty(\T^\d)$, use the mass-preserving multiplicative variation
\begin{equation}
\rho_\tau:=\frac{\bar\rho e^{\tau\zeta}}{\int_{\T^\d}\bar\rho e^{\tau\zeta}\,dx},
\qquad
\left.\frac{d}{d\tau}\right|_{\tau=0}\rho_\tau
=\bar\rho\left(\zeta-\int_{\T^\d}\bar\rho\zeta\,dx\right)=:\sigma.
\end{equation}
If $v$ is the mean-zero solution of $-\Delta v=\sigma$, direct differentiation gives
\begin{align}
\delta\Ec[h](v)&=-2\int_{\T^\d}(\Delta h)v\,dx,\label{eq:uniform-variation-energy}\\
\delta\mathcal C[h](v)&=2\int_{\T^\d}v\bigl(|D^2h|^2-(\Delta h)^2\bigr)\,dx.\label{eq:uniform-variation-cubic}
\end{align}
These identities, initially obtained for smooth $h$ and $v$, extend to the present $W^{2,p}$ functions by smooth approximation and continuity of the displayed terms. Since
\begin{equation}
|s\Phi'(s)|\le C_p(1+s^p+s^{-1}),
\end{equation}
the barrier is differentiable along the multiplicative variation. The first variation of \eqref{eq:uniform-penalized-functional} is therefore
\begin{equation}
0=\int_{\T^\d}\bigl(2w-\varepsilon\Phi'(\bar\rho)\bigr)
\bar\rho\left(\zeta-\int_{\T^\d}\bar\rho\zeta\,dx\right)dx.
\end{equation}
Since $\bar\rho\,dx$ is a probability measure, set
\begin{equation}
\kappa:=\int_{\T^\d}\bigl(2w-\varepsilon\Phi'(\bar\rho)\bigr)\bar\rho\,dx.
\end{equation}
Expanding the centered test function in the preceding identity gives
\begin{equation}
\int_{\T^\d}\bigl(2w-\varepsilon\Phi'(\bar\rho)-\kappa\bigr)\zeta\bar\rho\,dx=0
\qquad\text{for every }\zeta\in C^\infty(\T^\d).
\end{equation}
It follows that the finite signed measure with density $\bigl(2w-\varepsilon\Phi'(\bar\rho)-\kappa\bigr)\bar\rho$ must vanish. Hence
\begin{equation}\label{eq:uniform-euler-nonlocal}
2w-\varepsilon\Phi'(\bar\rho)=\kappa
\qquad \bar\rho\,dx\text{-a.e.}
\end{equation}
Because $\bar\rho>0$ almost everywhere, the equality holds Lebesgue-almost everywhere.

To justify applying $-\Delta$ to \eqref{eq:uniform-euler-nonlocal} and evaluating the resulting pointwise equation at a minimum of $\bar\rho$, we first show that the a priori $L^p$ maximizer is smooth and bounded away from zero. Initially $D^2h,\Delta h\in L^p$, so $A\in L^{p/2}$. Since $p/2>\d$,
\begin{equation}
w\in W^{2,p/2}(\T^\d)\hookrightarrow C^{1,\gamma}(\T^\d)
\end{equation}
for some $\gamma>0$. As $\Phi'$ is a smooth bijection from $(0,\infty)$ onto $\R$, equation \eqref{eq:uniform-euler-nonlocal} implies that $\bar\rho$ has a $C^{1,\gamma}$ representative taking values in a compact subinterval of $(0,\infty)$. Periodic Schauder estimates then bootstrap $h,w,$ and $\bar\rho$ to $C^\infty$. Applying $-\Delta$ to \eqref{eq:uniform-euler-nonlocal} gives the pointwise Euler equation
\begin{equation}\label{eq:uniform-euler-local}
2\left(|D^2h|^2-(\Delta h)^2+\ell\Delta h\right)
+\varepsilon\Delta\Phi'(\bar\rho)=0.
\end{equation}

Since the maximizing density is nonconstant, positive, and has mean one, it has a minimum point $x_0$ with $0<\bar\rho(x_0)<1$. Set $\tau:=1-\bar\rho(x_0)=\Delta h(x_0)\in(0,1)$.
One has $\nabla\bar\rho(x_0)=0$ and $\Delta\bar\rho(x_0)\ge0$, whence
\begin{equation}
\Delta\Phi'(\bar\rho)(x_0)
=\Phi''(\bar\rho(x_0))\Delta\bar\rho(x_0)\ge0.
\end{equation}
Equation \eqref{eq:uniform-euler-local} therefore yields
\begin{equation}\label{eq:uniform-minimum-euler}
|D^2h(x_0)|^2-\tau^2+\ell\tau\le0.
\end{equation}
On the other hand, the matrix trace inequality gives
\begin{equation}
|D^2h(x_0)|^2\ge\frac1\d(\Delta h(x_0))^2=\frac{\tau^2}{\d}.
\end{equation}
Combining this with \eqref{eq:uniform-minimum-euler},
\begin{equation}
0\ge \tau\bigl(\ell-\alpha_\d\tau\bigr)>0,
\end{equation}
where the strict inequality follows from $\tau<1$ and $\ell>\alpha_\d$. This contradiction proves \eqref{eq:uniform-cubic-target} for smooth strictly positive densities.

Now, let $\rho\in\P(\T^\d)\cap L^\infty(\T^\d)$ be nonnegative, set $h:=h_\rho$, and let $P_s=e^{s\Delta}$ be the periodic heat semigroup. Set
\begin{equation}
\rho_s=P_s\rho,
\qquad
h_s=P_sh.
\end{equation}
For $s>0$, the density $\rho_s$ is smooth and strictly positive, has mass one, and satisfies $-\Delta h_s=\rho_s-1$. Hence the dissipation-to-energy comparison in \eqref{eq:uniform-target-PL} holds for $(\rho_s,h_s)$. Periodic elliptic regularity gives $h\in W^{2,q}$ for every finite $q$; choosing $q>\d$ yields
\begin{equation}
\nabla h_s\longrightarrow\nabla h\quad\text{uniformly},
\qquad
\rho_s\longrightarrow\rho\quad\text{in }L^1.
\end{equation}
The dissipation and energy terms in \eqref{eq:uniform-target-PL} therefore converge to the corresponding quantities for $(\rho,h)$. This proves the bounded-source uniform-target estimate.

We now return to arbitrary $\d\ge1$ and derive the general estimate from this special case.
Let $h=\g\ast(\rho-\mu)$, so that $\Delta h=\mu-\rho$. If $h$ is nonconstant, then $N:=\operatorname*{ess\,sup}(\mu-\rho)>0$, and
\begin{equation}
\widehat\rho:=1-\frac{\mu-\rho}{N},
\qquad
\widehat h:=\frac{h}{N}
\end{equation}
satisfy $\widehat\rho\in\P(\T^\d)\cap L^\infty(\T^\d)$, $\widehat\rho\ge0$, and $-\Delta\widehat h=\widehat\rho-1$. Applying the uniform-target estimate just proved and rescaling gives
\begin{equation}\label{eq:upper-laplacian-scalar}
\int_{\T^\d}(\mu-\rho)|\nabla h|^2\,dx
\le\frac{\d-1}{\d}N\int_{\T^\d}|\nabla h|^2\,dx
\le\frac{\d-1}{\d}M\int_{\T^\d}|\nabla h|^2\,dx.
\end{equation}
Here the last inequality follows from $\mu-\rho\le\mu$, and hence $N\le M$.
Since $\mu\ge m$,
\begin{equation}
\dissip(\rho\mid\mu)=\int_{\T^\d}\mu|\nabla h|^2\,dx-\int_{\T^\d}(\mu-\rho)|\nabla h|^2\,dx
\ge\left(m-\frac{\d-1}{\d}M\right)\int_{\T^\d}|\nabla h|^2\,dx.
\end{equation}
This proves the dissipation-to-energy comparison in \eqref{eq:bounded-contrast-PL} for bounded sources. The case of constant $h$ is immediate. Finally, $\|\mu-1\|_{L^\infty}\le\varepsilon$ implies $m\ge1-\varepsilon$ and $M\le1+\varepsilon$, which gives the dissipation-to-energy comparison in \eqref{eq:near-uniform-PL}.

We next prove the metric-slope estimate. Let $\rho\in\P(\T^\d)$ satisfy $F_\mu(\rho)<\infty$, and set
\begin{equation}
\kappa:=m-\frac{\d-1}{\d}M.
\end{equation}
The assertion is immediate if $\kappa\le0$, so suppose that $\kappa>0$. Let $(\rho_t)_{t\ge0}$ be a solution furnished by the simultaneous-mollification construction in the proof of \cref{thm:Cauchy}, with initial datum $\rho$. For every $s>0$, the density $\rho_s$ is bounded, and hence the estimate just proved, \eqref{eq:weak-energy-inequality}, and Gr\"onwall's lemma give
\begin{equation}
F_\mu(\rho_t)
\le e^{-2\kappa(t-s)}F_\mu(\rho_s),
\qquad t\ge s>0.
\end{equation}
Sending $s\downarrow0$ and using \eqref{eq:initial-energy-continuity-torus}, we obtain
\begin{equation}\label{eq:bounded-contrast-decay-from-zero}
F_\mu(\rho_t)
\le e^{-2\kappa t}F_\mu(\rho).
\end{equation}
If $F_\mu(\rho)>0$, then \eqref{eq:initial-action-W2} and \eqref{eq:bounded-contrast-decay-from-zero} imply
\begin{equation}
\frac{F_\mu(\rho)-F_\mu(\rho_t)}{W_2(\rho,\rho_t)}
\ge
\left(\frac{F_\mu(\rho)-F_\mu(\rho_t)}{t}\right)^{1/2}
\ge
\left(\frac{1-e^{-2\kappa t}}{t}F_\mu(\rho)\right)^{1/2}.
\end{equation}
Since $\rho_t\to\rho$ in $W_2$ as $t\downarrow0$ (recall the bound \eqref{eq:initial-action-W2}), taking the limit superior in the definition of the descending slope yields
\begin{equation}
|\partial F_\mu|^2(\rho)\ge2\kappa F_\mu(\rho).
\end{equation}
The case $F_\mu(\rho)=0$ is immediate, completing the proof of \eqref{eq:bounded-contrast-slope-PL}.

It remains to relate the metric slope to the integral dissipation under the hypothesis of \cref{cor:bounded-contrast-integral-PL}. Let $\eta$ have finite Coulomb energy. The quadratic energy identity gives
\begin{equation}
F_\mu(\eta)
=F_\mu(\rho)+\int_{\T^\d}h\,d(\eta-\rho)
+\frac12\|\eta-\rho\|_{\dot H^{-1}}^2.
\end{equation}
If $\pi$ is an optimal coupling of $\rho$ and $\eta$, it follows that
\begin{equation}
F_\mu(\rho)-F_\mu(\eta)
\le\int_{\T^\d\times\T^\d}\bigl(h(x)-h(y)\bigr)\,d\pi(x,y).
\end{equation}
Because $h\in C^1(\T^\d)$, let $\omega$ be a modulus of continuity for $\nabla h$, and set $\delta:=W_2(\rho,\eta)$. For $\delta>0$ sufficiently small, set $r_\delta:=\sqrt{\delta}$, which is below the injectivity radius. If $\dist(x,y)\le r_\delta$ and $\gamma_{xy}$ is the constant-speed minimizing geodesic from $x$ to $y$, then
\begin{equation}
 h(x)-h(y)
 =-\left\langle\nabla h(x),\dot\gamma_{xy}(0)\right\rangle+R(x,y),
 \qquad
 |R(x,y)|\le\omega(r_\delta)\dist(x,y).
\end{equation}
Since the first marginal of $\pi$ is $\rho$, Cauchy--Schwarz gives
\begin{equation}
\int_{\{\dist(x,y)\le r_\delta\}}\bigl(h(x)-h(y)\bigr)\,d\pi(x,y)
\le
\left(\int_{\T^\d}|\nabla h|^2\,d\rho\right)^{1/2}\delta
+\omega(r_\delta)\delta.
\end{equation}
On the complementary set, the Lipschitz continuity of $h$ and the optimality of $\pi$ give
\begin{equation}
\int_{\{\dist(x,y)>r_\delta\}}\bigl|h(x)-h(y)\bigr|\,d\pi(x,y)
\le
\|\nabla h\|_{L^\infty}\frac{\delta^2}{r_\delta}.
\end{equation}
Consequently,
\begin{equation}
\int_{\T^\d\times\T^\d}\bigl(h(x)-h(y)\bigr)\,d\pi(x,y)
\le
\left(\int_{\T^\d}|\nabla h|^2\,d\rho\right)^{1/2}\delta
+\left(\omega(\sqrt{\delta})+\|\nabla h\|_{L^\infty}\sqrt{\delta}\right)\delta.
\end{equation}
Since the expression in parentheses tends to zero as $\eta\to\rho$ in $W_2$, this proves
\begin{equation}
\int_{\T^\d\times\T^\d}\bigl(h(x)-h(y)\bigr)\,d\pi(x,y)
\le
\left(\int_{\T^\d}|\nabla h|^2\,d\rho\right)^{1/2}W_2(\rho,\eta)
+o\bigl(W_2(\rho,\eta)\bigr).
\end{equation}
Consequently,
\begin{equation}
|\partial F_\mu|^2(\rho)
\le\int_{\T^\d}|\nabla h|^2\,d\rho
=\dissip(\rho\mid\mu).
\end{equation}
Combining this inequality with \eqref{eq:bounded-contrast-slope-PL} proves \eqref{eq:bounded-contrast-PL}, and the uniform and near-uniform cases follow by the same substitutions as above. Finally, if $\rho\in L^p(\T^\d)$ for some $p>\d$, Sobolev embedding gives $h\in W^{2,p}(\T^\d)\hookrightarrow C^1(\T^\d)$.
\end{proof}

\begin{remark}[A source lower bound]\label{rem:uniform-source-lower-bound}
If $m_\rho:=\operatorname*{ess\,inf}_{\T^\d}\rho$ and $h:=\g\ast(\rho-1)$, then the uniform-target estimate applied to
\begin{equation}
\widetilde\rho:=\frac{\rho-m_\rho}{1-m_\rho},
\qquad
\widetilde h:=\frac{h}{1-m_\rho},
\end{equation}
when $m_\rho<1$, gives the refinement
\begin{equation}
\dissip(\rho\mid1)
\ge2\left(\frac1\d+\frac{\d-1}{\d}m_\rho\right)
\mmd^2(\rho,1).
\end{equation}
The case $m_\rho=1$ is trivial.
\end{remark}

\begin{proof}[Proof of \cref{cor:bounded-contrast-convergence}]
By \cref{thm:Cauchy}, the solution is bounded after every positive waiting time. Combining \cref{cor:bounded-contrast-integral-PL} with the energy inequality \eqref{eq:weak-energy-inequality} and applying the integral form of Gr\"onwall's lemma gives \eqref{eq:bounded-contrast-convergence}.
\end{proof}

\begin{remark}[Dimension one]\label{rem:d1-global-pl}
If $\d=1$ and $E=-\g'\ast(\rho-\mu)$, then $E'=\rho-\mu$ and periodic integration by parts gives
\begin{equation}
\dissip(\rho\mid\mu)
=\int_{\T}E^2\rho\,dx
=\int_{\T}E^2\mu\,dx
\ge2\left(\operatorname*{ess\,inf}_{\T}\mu\right)
\mmd^2(\rho,\mu).
\end{equation}
Thus every uniformly positive target satisfies a global PL inequality in dimension one, without imposing the target density-ratio condition.
If $\rho$ is a solution constructed in \cref{thm:Cauchy} and $m:=\operatorname*{ess\,inf}_{\T}\mu>0$, then the energy inequality gives
\begin{equation}
\mmd^2(\rho_t,\mu)
\le e^{-2m(t-s)}\mmd^2(\rho_s,\mu),
\qquad t\ge s>0.
\end{equation}
\end{remark}

\subsection{Quantitative rates via a \emph{defective} PL inequality}

Because we do not assume or invoke a positive lower bound on $\rho_t$, the usual PL estimate cannot be applied directly. The first lemma below isolates the low-density region as an explicit defect, and the second shows that this defect decays exponentially along the flow. A post-waiting-time bound on the Coulomb field then turns the energy inequality into the inhomogeneous Gr\"onwall estimate used to prove \cref{thm2}.
\begin{lemma}[Defective PL inequality]\label{lem:defectPL}
    Let $\rho, \mu \in \mathcal{P}(\T^\d) \cap L^\infty(\T^\d)$ be such that $\mu \ge \lambda > 0$ almost everywhere. Then, for every $\varepsilon > 0$,
    \begin{equation}
        \dissip(\rho\mid\mu) \ge 2 \varepsilon \lambda \mmd^2(\rho, \mu) - \|\nabla \g \ast (\rho - \mu)\|_{L^{\infty}}^2 \| (\varepsilon \lambda - \rho)_+ \|_{L^1}.
    \end{equation}
\end{lemma}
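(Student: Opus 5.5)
Set $h:=\g\ast(\rho-\mu)$ and $E:=-\nabla h$. Since $\rho,\mu\in L^\infty$, periodic elliptic regularity gives $h\in W^{2,q}(\T^\d)$ for every finite $q$, so $E$ is continuous and bounded and the dissipation $\dissip(\rho\mid\mu)=\int_{\T^\d}|E|^2\rho\,dx$ is unambiguous. The argument splits $\rho$ according to whether it lies above or below the threshold $\varepsilon\lambda$.

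We use the pointwise inequality
\begin{equation}
\rho\ge\varepsilon\lambda-(\varepsilon\lambda-\rho)_+ ,
\end{equation}
which is an identity where $\rho<\varepsilon\lambda$ and trivial elsewhere. Multiplying by $|E|^2\ge0$ and integrating gives
\begin{equation}
\dissip(\rho\mid\mu)\ge\varepsilon\lambda\int_{\T^\d}|E|^2\,dx-\int_{\T^\d}|E|^2(\varepsilon\lambda-\rho)_+\,dx .
\end{equation}
We estimate the two terms separately. For the first term, the integration by parts recalled in the introduction (justified by the $W^{2,q}$ regularity) together with $-\Delta h=\rho-\mu$ on the torus, which holds because the constant in $-\Delta\g=\delta_0-1$ cancels since $\rho-\mu$ has zero mass, gives $\int|E|^2\,dx=\int h\,(\rho-\mu)\,dx=2\mmd^2(\rho,\mu)$. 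For the second term, we bound $|E|^2\le\|\nabla\g\ast(\rho-\mu)\|_{L^\infty}^2$ and pull it out, leaving $\|(\varepsilon\lambda-\rho)_+\|_{L^1}$. Combining the two estimates yields the claim.

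The lower bound $\mu\ge\lambda$ is not needed in this computation. It only fixes the scale of the threshold $\varepsilon\lambda$, which is chosen for the later decay of the defect along the flow. No step is a real obstacle. The only point requiring care is the regularity justification: $E\in L^\infty$ needs the bounded densities, and the Dirichlet identity for $\mmd^2$ needs the finite Coulomb energy, which is automatic for bounded densities on $\T^\d$.
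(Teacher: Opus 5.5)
Your proposal is correct and is essentially the paper's argument: the pointwise bound $\rho\ge\varepsilon\lambda-(\varepsilon\lambda-\rho)_+$ is a compact form of the paper's splitting of $\int\rho|E|^2\,dx$ over $\{\rho\ge\varepsilon\lambda\}$ and $\{\rho<\varepsilon\lambda\}$, and it is followed by the same Dirichlet identity $\int|E|^2\,dx=2\mmd^2(\rho,\mu)$ and $L^\infty$ bound on the field. You are also right that the hypothesis $\mu\ge\lambda$ plays no role in this static estimate; it only matters for the decay of the defect in \cref{lem:decaydefect}.
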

\begin{proof}
    Define the electric field $E := -\nabla \g \ast (\rho - \mu)$. We split the dissipation into two parts:
    \begin{align}
        \int_{\T^\d} \rho |E|^2 \, dx &= \int_{\{\rho \ge \varepsilon \lambda\}} \rho |E|^2 \, dx + \int_{\{\rho < \varepsilon \lambda\}} \rho |E|^2 \, dx \notag\\
        &\ge \varepsilon \lambda \int_{\T^\d} |E|^2 \, dx - \varepsilon \lambda \int_{\{\rho < \varepsilon \lambda\}} |E|^2 \, dx + \int_{\{\rho < \varepsilon \lambda\}} \rho |E|^2 \, dx \notag\\
        &= \varepsilon \lambda \int_{\T^\d} |E|^2 \, dx - \int_{\{\rho < \varepsilon \lambda\}} (\varepsilon \lambda - \rho) |E|^2 \, dx \notag\\
        &\ge \varepsilon \lambda (2 \mmd^2(\rho, \mu)) - \|E\|_\infty^2 \int_{\T^\d} (\varepsilon \lambda - \rho)_+ \, dx.
    \end{align}
    The result follows.
\end{proof}
We next quantify this defect along the flow.
\begin{lemma}\label{lem:decaydefect}
    Let $\rho_0\in\P(\T^\d)$, let $\mu\in \P(\T^\d)\cap L^\infty(\T^\d)$ satisfy $\operatorname*{ess\,inf}_{\T^\d}\mu =: \la >0$, and let $\rho$ be a global weak solution to \eqref{eq:PDE} as constructed above. Then, for all $\varepsilon\in (0,1)$ and $t>0$,
    \begin{equation}
         \int_{\T^\d} (\varepsilon \la - \rho(t))_+\, dx \le  e^{-\la (1-\varepsilon)t} \varepsilon\la.
    \end{equation}
\end{lemma}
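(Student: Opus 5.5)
The plan is to prove the estimate for the smooth approximate solutions $\rho_k$ of \eqref{eq:PDEscheme} from the existence construction, with data $\rho_{0,k}=\rho_0\ast\varphi_k$ and $\mu_k=\mu\ast\varphi_k$, and then pass to the limit. Two features of the approximation matter:
\begin{itemize}
\item mollification preserves the lower bound, so $\mu_k\ge\lambda$ everywhere;
\item the torus has unit volume, so for $f(s):=(\varepsilon\lambda-s)_+$ we have $\int_{\T^\d} f(\rho_{0,k})\,dx\le\varepsilon\lambda$ trivially.
\end{itemize}
The second point is why no assumption on $\rho_0$ is needed.

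\textbf{Key computation (smooth level).} Write $\psi:=\g\ast(\rho-\mu_k)$, so that $\Delta\psi=\mu_k-\rho$. Expanding the equation gives
\begin{equation}
\partial_t\rho=\nabla\rho\cdot\nabla\psi+\rho(\mu_k-\rho).
\end{equation}
Take a convex function $f$ of one variable, first smooth (e.g.\ a smoothing of $(\varepsilon\lambda-s)_+$, removed at the end by monotone convergence). Multiplying by $f'(\rho)$ and integrating, the transport term becomes $\int\nabla f(\rho)\cdot\nabla\psi=-\int f(\rho)(\mu_k-\rho)$ after periodic integration by parts. Hence
\begin{equation}
\frac{d}{dt}\int_{\T^\d} f(\rho)\,dx=\int_{\T^\d}\bigl(\rho f'(\rho)-f(\rho)\bigr)(\mu_k-\rho)\,dx.
\end{equation}
For $f(s)=(\varepsilon\lambda-s)_+$ one has $\rho f'(\rho)-f(\rho)=-\varepsilon\lambda\,\indic_{\{\rho<\varepsilon\lambda\}}$. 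On the set $\{\rho<\varepsilon\lambda\}$ we have $\mu_k-\rho\ge(1-\varepsilon)\lambda$. Therefore
\begin{equation}
\frac{d}{dt}\int f(\rho)\le-\varepsilon\lambda\,(1-\varepsilon)\lambda\,\bigl|\{\rho<\varepsilon\lambda\}\bigr|\le-(1-\varepsilon)\lambda\int f(\rho),
\end{equation}
where the last step uses $f\le\varepsilon\lambda\,\indic_{\{\rho<\varepsilon\lambda\}}$. Gr\"onwall then gives
\begin{equation}
\int f(\rho_k(t))\,dx\le e^{-\lambda(1-\varepsilon)t}\varepsilon\lambda.
\end{equation}
Equivalently, along characteristics the density obeys $\dot\rho=\rho(\mu_k-\rho)$, so the vacuum region is filled at rate at least $(1-\varepsilon)\lambda$.

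\textbf{Passage to the limit.} This is the main technical point, but it is mild. The constructed solution satisfies $\rho\in C([0,T];\P(\T^\d))$ narrowly, and for $t>0$ the $\rho_k(t)$ are uniformly bounded in $L^\infty$ by \cref{prop:hyper}. Hence $\rho_k(t)\rightharpoonup\rho(t)$ weakly in $L^1$ for each fixed $t>0$, along the extracted subsequence. Since $f$ is convex and continuous, $\rho\mapsto\int f(\rho)$ is weakly lower semicontinuous, so
\begin{equation}
\int f(\rho(t))\le\liminf_{k\to\infty}\int f(\rho_k(t)),
\end{equation}
which yields the claim. If the smoothing of $f$ was used, it is removed at the smooth level before taking the limit. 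The only care needed is that the bound on $\int f(\rho_{0,k})$ is uniform in $k$, and it is, since it never uses any information on $\rho_0$.
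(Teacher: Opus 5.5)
Your proposal is correct and follows essentially the paper's route: smooth approximations with $\mu_k\ge\lambda$, the identity $\frac{d}{dt}\int(\varepsilon\lambda-\rho_k)_+\,dx=-\varepsilon\lambda\int_{\{\rho_k<\varepsilon\lambda\}}(\mu_k-\rho_k)\,dx$ (exactly the paper's $q=1$ computation, since $z+\rho_k=\varepsilon\lambda$ on $\{z>0\}$), Gr\"onwall with the trivial unit-volume initial bound, and weak lower semicontinuity of the convex functional at each fixed $t>0$. The one step to phrase more precisely is the fixed-time convergence $\rho_k(t)\rightharpoonup\rho(t)$: it comes from the uniform-in-$k$ time-equicontinuity estimate in the construction (via Arzel\`a--Ascoli), not from narrow continuity of the limit curve alone.
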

\begin{proof}
    We perform the computation for the smooth approximate solutions. For the simultaneous mollifications used in the construction, $\mu_k=\mu\ast\varphi_k$ still satisfies $\mu_k\ge\lambda$, so the estimate below is uniform in $k$. At every fixed positive time, after extraction, $\rho_k(t)\stackrel{*}{\rightharpoonup}\rho(t)$ in $L^\infty(\T^\d)$. Since $r\mapsto(\varepsilon\lambda-r)_+$ is convex and continuous, the functional $\eta\mapsto\int_{\T^\d}(\varepsilon\lambda-\eta)_+\,dx$ is weak-$*$ lower semicontinuous on $L^\infty(\T^\d)$. It therefore suffices to prove the estimate for the smooth approximate solutions. To justify the chain rule, one may additionally replace the positive-part function by a smooth convex approximation and then pass to the limit.
    
    Let
    $z=(\varepsilon\lambda-\rho)_+$. Since the equation is
    $\partial_t\rho+v\cdot\nabla\rho+\rho\div v=0$, with
    $v=-\nabla\g\ast(\rho-\mu)$ and $\div v=\rho-\mu$, one has on the set $\{z>0\}$
    \begin{equation}
       (\partial_t+v\cdot\nabla)z=\rho(\rho-\mu).
    \end{equation}
    Therefore, for $q\ge1$,
    \begin{align}
    \frac{d}{dt}\int_{\T^\d}z^q\,dx
    &=\int_{\T^\d}z^q\div v\,dx
      +q\int_{\{z>0\}}z^{q-1}\rho(\rho-\mu)\,dx\notag\\
    &=\int_{\T^\d}z^q\left(1+q\frac{\rho}{z}\right)(\rho-\mu)\,dx,\label{eq:dissipPlus}
    \end{align}
    where the quotient is understood only on $\{z>0\}$. On that set, $\rho\le\varepsilon\lambda$ and $\mu\ge\lambda$, hence
    $\rho-\mu\le-(1-\varepsilon)\lambda$. The term containing $q\rho/z$ has the same sign and may be discarded. Thus,
    \begin{equation}
    \frac{d}{dt}\int_{\T^\d}z^q\,dx
    \le-(1-\varepsilon)\lambda\int_{\T^\d}z^q\,dx.
    \end{equation}
    Taking $q=1$, applying Gr\"onwall's lemma between $s$ and $t$, and using
    $\int z(s)\le\varepsilon\lambda$, we find
    \begin{equation}
    \int_{\T^\d}(\varepsilon\lambda-\rho(t))_+\,dx
    \le e^{-(1-\varepsilon)\lambda(t-s)}\varepsilon\lambda.
    \end{equation}
    Sending $s\downarrow0$ proves the claim.
\end{proof}

With the two defect estimates established, we turn to the proof of \cref{thm2}.

\begin{proof}[Proof of \cref{thm2}]
Write $F(t)=\mmd^2(\rho_t,\mu)$ and $E(t)=-\nabla\g\ast(\rho_t-\mu)$. We first bound the field on $[s,\infty)$. If $\d=1$, the derivative of the periodic Green function is bounded, and therefore
\begin{equation}
\|E(t)\|_\infty\le\|\g'\|_\infty\|\rho_t-\mu\|_{L^1}\le2\|\g'\|_\infty.
\end{equation}
If $\d\ge2$, the singular part of the periodic Coulomb kernel satisfies the standard near--far estimate. Since $\|\rho_t-\mu\|_{L^1}\le2$, the dimension-only constant $C_\d$ in the definition of $K_{\d,\mu}(s)$ may be chosen so that
\begin{equation}
\|E(t)\|_\infty^2
\le C_\d\left(
1+2^{1/\d}\|\rho_t-\mu\|_{L^\infty}^{1-1/\d}
\right)^2.
\end{equation}
The ultracontractive estimate in \cref{prop:hyper} and $t\ge s$ then give, in both cases,
\begin{equation}\label{eq:field-bound-Ks}
\|E(t)\|_\infty^2\le K_{\d,\mu}(s).
\end{equation}

On every finite interval $[s,T]$, the positive-time bounds on $\rho$ and $E$ imply
\begin{equation}
\rho E\in L^\infty([s,T];L^2),
\qquad
\rho-\mu\in W^{1,\infty}([s,T];\dot H^{-1}).
\end{equation}
The Hilbert-space chain rule therefore gives $F'=-\dissip(\rho\mid\mu)$ almost everywhere. Combining this identity with \cref{lem:defectPL,lem:decaydefect} and \eqref{eq:field-bound-Ks}, we obtain for almost every $t\ge s$

\begin{equation}\label{eq:inhomogeneous-F}
F'(t)\le-2\varepsilon\lambda F(t)
+\varepsilon\lambda K_{\d,\mu}(s)e^{-(1-\varepsilon)\lambda t}.
\end{equation}
Variation of constants gives
\begin{align}
F(t)
&\le e^{-a_\varepsilon(t-s)}F(s)
+\varepsilon\lambda K_{\d,\mu}(s)
\int_s^t e^{-a_\varepsilon(t-r)}e^{-b_\varepsilon r}\,dr\notag\\
&=e^{-a_\varepsilon(t-s)}F(s)
+\varepsilon\lambda K_{\d,\mu}(s)e^{-b_\varepsilon s}
\Theta_\varepsilon(t-s),
\end{align}
which is \eqref{eq:convergence-exact}. Notice that the resonant value $\varepsilon=1/3$ is exactly the case $a_\varepsilon=b_\varepsilon=2\lambda/3$ and produces the factor $(t-s)e^{-2\lambda(t-s)/3}$.

Finally, let $0<\gamma<2\lambda/3$. One can choose $\varepsilon\in(0,1)$ so that
\begin{equation}
\gamma<\min\{2\varepsilon\lambda,(1-\varepsilon)\lambda\}.
\end{equation}
Both terms in \eqref{eq:convergence-exact} are then bounded by a constant times $e^{-\gamma(t-s)}$, which proves \eqref{eq:convergence-subendpoint}.
\end{proof}

\subsection{\texorpdfstring{Proof of \cref{prop:counterexvanishingmu}}{Proof of the single-point vanishing counterexample}}

We first give a one-dimensional construction. We then record why its transverse lift does not prove the single-point vanishing statement in higher dimensions, before giving a genuinely localized construction.

\begin{lemma}[One-dimensional counterexample]\label{lem:one-dimensional-counterexample}
There exists a Lipschitz probability density $\mu\in\P(\T)$, vanishing only at the origin, and Lipschitz probability densities $(\rho_\delta)_{\delta>0}$ such that $\rho_\delta\to\mu$ almost everywhere and
\begin{equation}
\frac{\mmd^2(\rho_\delta,\mu)}{\dissip(\rho_\delta\mid\mu)}\longrightarrow+\infty
\qquad(\delta\downarrow0).
\end{equation}
\end{lemma}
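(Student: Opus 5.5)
The plan uses the one-dimensional structure recorded in \cref{rem:d1-global-pl}. For $E:=-\g'\ast(\rho-\mu)$ we have $E'=\rho-\mu$, and periodic integration by parts gives $\int_\T E^2(\rho-\mu)\,dx=\int_\T E^2E'\,dx=\tfrac13\int_\T (E^3)'\,dx=0$. Hence
\begin{equation*}
\dissip(\rho\mid\mu)=\int_\T E^2\mu\,dx,
\qquad
\mmd^2(\rho,\mu)=\tfrac12\int_\T E^2\,dx.
\end{equation*}
The ratio $\dissip/\mmd^2$ is therefore at most $2\sup\{\mu(x):x\in\supp E\}$. So it suffices to build sources whose field $E$ is supported in a shrinking neighborhood of the zero of $\mu$.

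\textbf{Target.} I would take $\mu$ Lipschitz with $\mu(x)=c_0|x|$ for $|x|\le\frac14$ and $\mu>0$ elsewhere, normalized to be a probability density. It vanishes only at $0$.

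\textbf{Sources.} I would prescribe the field rather than the density. Fix $\phi\in C_c^\infty((1,2))$, not identically zero, with $\int\phi=0$. For small $\delta>0$ set
\begin{equation*}
F_\delta(x):=a\,\delta^2\phi(x/\delta),
\qquad
\rho_\delta:=\mu+F_\delta'.
\end{equation*}
\begin{itemize}
\item Since $F_\delta$ is periodic, $\int_\T F_\delta'=0$, so $\rho_\delta$ has unit mass.
\item On $\supp F_\delta\subset[\delta,2\delta]$ we have $\mu\ge c_0\delta$, while $|F_\delta'|\le a\delta\|\phi'\|_\infty$. Choosing $a\le c_0/\|\phi'\|_\infty$ gives $\rho_\delta\ge0$.
\item $\rho_\delta$ is Lipschitz, being $\mu$ plus a smooth function.
\item $\rho_\delta=\mu$ outside $[\delta,2\delta]$, so $\rho_\delta\to\mu$ almost everywhere.
\end{itemize}

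\textbf{Identifying the field.} The periodic field $E_\delta$ satisfies $E_\delta'=F_\delta'$, so the two differ by a constant. Both have zero mean on $\T$: $E_\delta$ because it is the derivative of a periodic potential, and $F_\delta$ because $\int\phi=0$. Hence $E_\delta=F_\delta$, which is the one point needing a short justification.

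\textbf{Conclusion.} With $E_\delta=F_\delta$,
\begin{equation*}
\dissip(\rho_\delta\mid\mu)\le 2c_0\delta\int F_\delta^2\,dx=4c_0\delta\,\mmd^2(\rho_\delta,\mu),
\end{equation*}
and $\mmd^2(\rho_\delta,\mu)=\tfrac12\int F_\delta^2>0$. So the ratio $\mmd^2/\dissip$ is at least $(4c_0\delta)^{-1}\to\infty$.

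The only delicate balance is keeping $\rho_\delta\ge0$ while the field is localized. This is why the perturbation sits on $[\delta,2\delta]$, away from the zero of $\mu$, with amplitude comparable to $\mu$ there. The scaling $\delta^2$ in $F_\delta$ is exactly what makes $F_\delta'=O(\delta)$ match $\mu\sim\delta$ on that interval.
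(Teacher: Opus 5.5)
Your proof is correct and follows essentially the same route as the paper: you prescribe a localized, mean-zero smooth field near the zero of $\mu$, set $\rho_\delta=\mu+E_\delta'$, and use the one-dimensional identity $\dissip(\rho\mid\mu)=\int_\T E^2\mu\,dx$ to get a ratio of order $\delta^{-1}$. The differences are cosmetic. The paper uses the odd field $-2c\,x|x|\chi(x/\delta)$ straddling the origin, obtains positivity from the sign condition $y\chi'(y)\le0$, and computes the $\delta^5$ and $\delta^6$ scalings exactly. You instead place a mean-zero bump on $(\delta,2\delta)$, secure positivity by a crude amplitude bound, and bound the ratio directly via $\sup_{\supp E_\delta}\mu\le 2c_0\delta$.
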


\begin{proof}
Identify $\T$ with $(-\frac12,\frac12]$ and set $\mu(x)=4|x|$. Then $\mu$ is a Lipschitz probability density and vanishes only at $0$. Choose an even function $\chi\in C_c^\infty((-\frac32,\frac32))$ with $0\le\chi\le1$, $\chi=1$ on $[-1,1]$, and $y\chi'(y)\le0$. For $0<\delta<1/6$ and a fixed $c\in(0,1)$, define in this coordinate chart
\begin{equation}\label{eq:Edelta-one-dimensional}
E_\delta(x):=-2c\,x|x|\,\chi(x/\delta),
\end{equation}
and extend it periodically. Its support is contained strictly inside the fundamental interval, and it is a periodic $C^1$ function of mean zero. Set
\begin{equation}
\rho_\delta:=\mu+E_\delta'.
\end{equation}
Then $\int_\T\rho_\delta=1$. On the support of $E_\delta$,
\begin{equation}
\rho_\delta(x)=4|x|\left(1-c\chi(x/\delta)-\frac c2\frac{x}{\delta}\chi'(x/\delta)\right)
\ge4(1-c)|x|,
\end{equation}
because $y\chi'(y)\le0$; outside this support $\rho_\delta=\mu$. Thus, $\rho_\delta$ is a nonnegative Lipschitz probability density. Since $E_\delta'=\rho_\delta-\mu$ and $E_\delta$ has mean zero, it is precisely the electric field $-\g'\ast(\rho_\delta-\mu)$.

For any one-dimensional periodic pair, integration by parts gives
\begin{equation}\label{eq:one-dimensional-D-identity}
\dissip(\rho\mid\mu)=\int_\T E^2\rho\,dx
=\int_\T E^2E'\,dx+\int_\T E^2\mu\,dx
=\int_\T E^2\mu\,dx.
\end{equation}
Consequently, with positive constants depending only on $c$ and $\chi$,
\begin{align}
\mmd^2(\rho_\delta,\mu)
&=\frac12\int_\T E_\delta^2\,dx
=2c^2\delta^5\int_\R y^4\chi(y)^2\,dy,\\
\dissip(\rho_\delta\mid\mu)
&=\int_\T E_\delta^2\mu\,dx
=16c^2\delta^6\int_\R |y|^5\chi(y)^2\,dy.
\end{align}
Their ratio is a positive constant times $\delta^{-1}$. The almost-everywhere convergence follows because $\rho_\delta=\mu$ outside $(-3\delta/2,3\delta/2)$.
\end{proof}

\begin{remark}[The transverse lift]
Let $\mu_1$ and $\rho_{\delta,1}$ be the preceding one-dimensional densities and define on $\T^\d$
$\mu(x)=\mu_1(x_1)$ and $\rho_\delta(x)=\rho_{\delta,1}(x_1)$. The marginalization property of the periodic Green function shows that the energy and dissipation are exactly the corresponding one-dimensional quantities. This is the simplest higher-dimensional counterexample. However, its target vanishes on the codimension-one subtorus $\{x_1=0\}$, not at a single point, so it does not prove \cref{prop:counterexvanishingmu} when $\d\ge2$.
\end{remark}

\begin{proof}[Proof of \cref{prop:counterexvanishingmu}]
The one-dimensional case follows from \cref{lem:one-dimensional-counterexample}. Assume $\d\ge2$ and identify $\T^\d$ with $(-\frac12,\frac12]^\d$. Fix $r_0\in(0,1/8)$ and choose $\eta\in C_c^\infty(B_{3r_0})$ such that $0\le\eta\le1$ and $\eta=1$ on $B_{2r_0}$. Define
\begin{equation}
\widetilde\mu(x):=\eta(x)|x|^2+1-\eta(x),\qquad
\mu:=\frac{\widetilde\mu}{\int_{\T^\d}\widetilde\mu}.
\end{equation}
This is a smooth probability density that vanishes only at the origin, and there is a constant $c_0>0$ such that
\begin{equation}\label{eq:localized-target-quadratic}
\mu(x)=c_0|x|^2\qquad\text{for }|x|\le2r_0.
\end{equation}

Choose a radial cutoff $\chi\in C_c^\infty(B_1)$ with $0\le\chi\le1$ and $\chi=1$ on $B_{1/2}$, and set
\begin{equation}
\Phi(y):=|y|^4\chi(y),\qquad M:=\|(\Delta\Phi)_-\|_{L^\infty}.
\end{equation}
Fix $a>0$ so small that $aM\le c_0/4$ (with no restriction if $M=0$). For $0<\delta<r_0$, define the smooth periodic function
\begin{equation}
\Phi_\delta(x):=a\delta^4\Phi(x/\delta)
\end{equation}
in $B_\delta$ and extend it by zero outside that coordinate ball. Put
\begin{equation}\label{eq:localized-source-definition}
E_\delta:=\nabla\Phi_\delta,\qquad
\rho_\delta:=\mu+\div E_\delta=\mu+\Delta\Phi_\delta.
\end{equation}
Since $\int_{\T^\d}\Delta\Phi_\delta=0$, the source has mass one. We verify its nonnegativity. If $|x|\le\delta/2$, then
\begin{equation}
\Delta\Phi(x/\delta)=4(\d+2)|x/\delta|^2\ge0.
\end{equation}
If $\delta/2\le|x|\le\delta$, then \eqref{eq:localized-target-quadratic} and the choice of $a$ give
\begin{equation}
\rho_\delta(x)
=\delta^2\left(c_0|x/\delta|^2+a\Delta\Phi(x/\delta)\right)
\ge\delta^2(c_0/4-aM)\ge0.
\end{equation}
Outside $B_\delta$, $\rho_\delta=\mu$. Thus, $\rho_\delta$ is a smooth probability density.

Because $\rho_\delta-\mu=\Delta\Phi_\delta$, the zero-mean periodic solution of
$-\Delta h_\delta=\rho_\delta-\mu$ differs from $-\Phi_\delta$ only by a constant. Hence
$-\nabla\g\ast(\rho_\delta-\mu)=\nabla\Phi_\delta=E_\delta$. A change of variables gives
\begin{equation}\label{eq:localized-energy-scaling}
\mmd^2(\rho_\delta,\mu)
=\frac{a^2}{2}\delta^{\d+6}\int_{B_1}|\nabla\Phi(y)|^2\,dy.
\end{equation}
Moreover, on the support of $E_\delta$, the bracket in the preceding formula for $\rho_\delta$ is uniformly bounded, so $0\le\rho_\delta\le C\delta^2$. Therefore
\begin{equation}\label{eq:localized-dissipation-scaling}
\dissip(\rho_\delta\mid\mu)
=\int_{\T^\d}|E_\delta|^2\rho_\delta\,dx
\le C a^2\delta^{\d+8}.
\end{equation}
The quotient of \eqref{eq:localized-energy-scaling} by \eqref{eq:localized-dissipation-scaling} is bounded below by a positive constant times $\delta^{-2}$ and therefore tends to infinity.
\end{proof}

\subsection{\texorpdfstring{Proof of \cref{prop:counterexamplePLinfmu}}{Proof of the target-class uniformity obstruction}}

Here, the target never degenerates: every $\mu_\varepsilon$ stays above the same $\lambda$. The obstruction instead comes from concentrating the remaining target mass near one point and choosing a source that vanishes on a fixed ball. This confines the dissipation to a fixed shell while the interior Coulomb energy diverges as $\varepsilon\downarrow0$ in $\d\ge2$; the proof first constructs the pair and then verifies admissibility and these two estimates.

    Fix $0<\lambda<1$, fix a radius $R\in(0,1/10)$, and choose $\delta\in (0,R/(\d-1)]$ with $R+\delta<1/5$. Let $B_R:=B(0,R)\subset\T^\d$ and let $\eta_\varepsilon\in C_c^\infty(B_\varepsilon)$ be nonnegative, radial, and normalized by
    \begin{equation}
    \int_{\T^\d} \eta_\varepsilon\,dx=1.
    \end{equation}
    Define the target density
    \begin{equation}
    \mu_\varepsilon(x):=\lambda + (1-\lambda)\eta_\varepsilon(x).
    \end{equation}
    Then $\mu_\varepsilon\in C^\infty(\T^\d)$, $\mu_\varepsilon\ge \lambda$, and $\int_{\T^\d}\mu_\varepsilon=1$.

    Let $v_\varepsilon=v_\varepsilon(r)$ solve the radial Dirichlet problem
    \begin{equation}
    -v_\varepsilon''(r)-\frac{\d-1}{r}v_\varepsilon'(r)=\mu_\varepsilon(r)
    \qquad (0<r<R),
    \qquad
    v_\varepsilon'(0)=0,
    \qquad
    v_\varepsilon(R)=0.
    \end{equation}
    Set
    \begin{equation}
    M:=\int_{B_R}\mu_\varepsilon(x)\,dx = \lambda |B_R| + (1-\lambda),
    \qquad
    q:= -v_\varepsilon'(R)=\frac{M}{\sigma_\d R^{\d-1}},
    \end{equation}
    where the last identity follows from radial Gauss law; in particular, $q$ is independent of $\varepsilon$.

    Now define a radial profile on the shell $R\le r\le R+\delta$ by
    \begin{equation}
    W(r):=q\delta\Big(\frac12 - \frac{r-R}{\delta} + \frac12\Big(\frac{r-R}{\delta}\Big)^2\Big).
    \end{equation}
    Then
    \begin{equation}
    W(R)=\frac{q\delta}{2},\qquad W'(R)=-q,
    \qquad W(R+\delta)=0,
    \qquad W'(R+\delta)=0.
    \end{equation}
    Finally, set the global radial potential
    \begin{equation}
    u_{\varepsilon,\delta}(x):=
    \begin{cases}
    v_\varepsilon(|x|)+\dfrac{q\delta}{2}, & |x|\le R,\\[1ex]
    W(|x|), & R\le |x|\le R+\delta,\\[1ex]
    0, & |x|\ge R+\delta.
    \end{cases}
    \end{equation}
    Since the value and radial derivative match at $r=R$ and $r=R+\delta$, the function $u_{\varepsilon,\delta}$ is $C^1$ and piecewise $C^\infty$.

    Define the source density by
    \begin{equation}
    \rho_{\varepsilon,\delta}:= \mu_\varepsilon + \Delta u_{\varepsilon,\delta}.
    \end{equation}
    The following lemma collects the required properties of the constructed pair.

    \begin{lemma}
    For every fixed $\delta$ as above and every $0<\varepsilon<R$:
    \begin{enumerate}[label=\roman*.,leftmargin=2em]
    \item $\rho_{\varepsilon,\delta}\ge 0$ and $\int_{\T^\d}\rho_{\varepsilon,\delta}=1$;
    \item $\rho_{\varepsilon,\delta}=0$ on $B_R$ and $\rho_{\varepsilon,\delta}=\lambda$ on $\T^\d\setminus B_{R+\delta}$;
    \item the dissipation $\dissip (\rho_{\varepsilon,\delta}\mid \mu_\varepsilon)$ is bounded uniformly in $\varepsilon$;
    \item the energy $\mmd^2 (\rho_{\varepsilon,\delta},\mu_\varepsilon)$ diverges as $\varepsilon\downarrow0$:
    \begin{equation}
    \mmd^2 (\rho_{\varepsilon,\delta},\mu_\varepsilon)
    \ge c_{\d,\lambda}
    \begin{cases}
    \log(R/\varepsilon), & \d=2,\\[0.5ex]
    \varepsilon^{2-\d}-R^{2-\d}, & \d\ge 3,
    \end{cases}
    \end{equation}
    for some $c_{\d,\lambda}>0$ independent of $\varepsilon$.
    Consequently,
    \begin{equation}
    \frac{\dissip (\rho_{\varepsilon,\delta}\mid \mu_\varepsilon)}{\mmd^2 (\rho_{\varepsilon,\delta},\mu_\varepsilon)}\longrightarrow 0
    \qquad (\varepsilon\downarrow 0).
    \end{equation}
    \end{enumerate}
    \end{lemma}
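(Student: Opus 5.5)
The plan is to compute everything from the explicit radial potential $u_{\varepsilon,\delta}$. The first step is to show that $u_{\varepsilon,\delta}$ is, up to sign and an additive constant, the Coulomb potential of the pair. Since $u_{\varepsilon,\delta}$ is $C^1$ and piecewise smooth across the two spheres $\{|x|=R\}$ and $\{|x|=R+\delta\}$, its distributional Laplacian has no singular part there. It is therefore the piecewise Laplacian, an $L^\infty$ function. Moreover, $u_{\varepsilon,\delta}$ is supported in $B_{R+\delta}\subset B_{1/5}$, so it is a genuine periodic function and $\int_{\T^\d}\Delta u_{\varepsilon,\delta}=0$. This gives the mass statement in (i). Next, from $\rho_{\varepsilon,\delta}-\mu_\varepsilon=\Delta u_{\varepsilon,\delta}$, the mean-zero solution $h$ of $-\Delta h=\rho_{\varepsilon,\delta}-\mu_\varepsilon$ equals $-u_{\varepsilon,\delta}$ up to a constant, so
\begin{equation}
-\nabla\g\ast(\rho_{\varepsilon,\delta}-\mu_\varepsilon)=\nabla u_{\varepsilon,\delta}.
\end{equation}
Existence and smoothness of the radial Dirichlet solution $v_\varepsilon$ are standard, since $\mu_\varepsilon$ is smooth and radial.

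For (ii) and the nonnegativity in (i), I would compute region by region. On $B_R$ we have $\Delta u_{\varepsilon,\delta}=\Delta v_\varepsilon=-\mu_\varepsilon$, hence $\rho_{\varepsilon,\delta}=0$. Outside $B_{R+\delta}$, $u_{\varepsilon,\delta}\equiv0$, and $\mu_\varepsilon=\lambda$ there because $\supp\eta_\varepsilon\subset B_\varepsilon$ with $\varepsilon<R$; hence $\rho_{\varepsilon,\delta}=\lambda$. On the shell, $\mu_\varepsilon=\lambda$ again, and with $s=(r-R)/\delta\in[0,1]$,
\begin{equation}
\Delta W=W''+\frac{\d-1}{r}W'=\frac q\delta-\frac{(\d-1)q}{r}(1-s)\ge q\Bigl(\frac1\delta-\frac{\d-1}{R}\Bigr)\ge0,
\end{equation}
using $r\ge R$ and the choice $\delta\le R/(\d-1)$. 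Hence $\rho_{\varepsilon,\delta}\ge\lambda\ge0$ on the shell. This sign check is the only place where the constraint on $\delta$ enters, and it is the step I would verify most carefully, including the matching conditions $W(R)=q\delta/2$ and $W'(R)=-q=v_\varepsilon'(R)$ that make $u_{\varepsilon,\delta}$ $C^1$.

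For (iii), the integrand $\rho_{\varepsilon,\delta}|\nabla u_{\varepsilon,\delta}|^2$ vanishes on $B_R$ because $\rho_{\varepsilon,\delta}=0$ there, and vanishes outside $B_{R+\delta}$ because $\nabla u_{\varepsilon,\delta}=0$ there. On the shell we have $|W'|\le q$ and $\rho_{\varepsilon,\delta}\le\lambda+q/\delta$. Since $q$ is independent of $\varepsilon$, this gives
\begin{equation}
\dissip\le(\lambda+q/\delta)\,q^2\,|B_{R+\delta}\setminus B_R|,
\end{equation}
which is uniform in $\varepsilon$.

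For (iv), I would use $\mmd^2=\frac12\int|\nabla u_{\varepsilon,\delta}|^2\ge\frac12\int_{B_R}|v_\varepsilon'|^2$. Radial Gauss law gives $v_\varepsilon'(r)=-m_\varepsilon(r)/(\sigma_\d r^{\d-1})$, where $m_\varepsilon(r):=\int_{B_r}\mu_\varepsilon\ge1-\lambda$ for $r\ge\varepsilon$. Therefore
\begin{equation}
\mmd^2\ge\frac{(1-\lambda)^2}{2\sigma_\d}\int_\varepsilon^R r^{1-\d}\,dr,
\end{equation}
which is $c\log(R/\varepsilon)$ for $\d=2$ and $c(\varepsilon^{2-\d}-R^{2-\d})$ for $\d\ge3$. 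Dividing the uniform bound from (iii) by this divergent lower bound gives the ratio tending to $0$.
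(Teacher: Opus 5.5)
Your proposal is correct and follows the paper's proof essentially step by step: the regionwise computation of $\rho_{\varepsilon,\delta}$ with the shell sign check via $\delta\le R/(\d-1)$, the shell-only bound $\dissip(\rho_{\varepsilon,\delta}\mid\mu_\varepsilon)\le q^2(\lambda+q/\delta)\,|B_{R+\delta}\setminus B_R|$, and the interior Gauss-law lower bound $\mmd^2(\rho_{\varepsilon,\delta},\mu_\varepsilon)\ge\frac{(1-\lambda)^2}{2\sigma_\d}\int_\varepsilon^R r^{1-\d}\,dr$. Your explicit remarks make two steps explicit that the paper leaves tacit: the $C^1$ matching rules out singular layers in $\Delta u_{\varepsilon,\delta}$, and $\nabla u_{\varepsilon,\delta}$ is the Coulomb field.
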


    \begin{proof}
    \ul{Step 1: explicit form of $\rho_{\varepsilon,\delta}$.}
    Inside $B_R$ we have $\Delta u_{\varepsilon,\delta}=\Delta v_\varepsilon=-\mu_\varepsilon$, hence
    \begin{equation}
    \rho_{\varepsilon,\delta}=0 \qquad\text{on }B_R.
    \end{equation}
    Outside $B_{R+\delta}$ we have $u_{\varepsilon,\delta}=0$, so
    \begin{equation}
    \rho_{\varepsilon,\delta}=\mu_\varepsilon=\lambda
    \qquad\text{on }\T^\d\setminus B_{R+\delta},
    \end{equation}
    because $\eta_\varepsilon$ is supported in $B_\varepsilon\subset B_R$.

    In the shell, writing $s=(r-R)/\delta\in[0,1]$, one has
    \begin{equation}
    W'(r)=-q(1-s),
    \qquad
    W''(r)=\frac{q}{\delta}.
    \end{equation}
    Therefore,
    \begin{equation}
    \Delta u_{\varepsilon,\delta}(x)
    = \frac{q}{\delta}-\frac{(\d-1)q}{|x|}(1-s)
    \qquad (R<|x|<R+\delta),
    \end{equation}
    and so
    \begin{equation}
    \rho_{\varepsilon,\delta}(x)
    = \lambda + \frac{q}{\delta}-\frac{(\d-1)q}{|x|}(1-s).
    \end{equation}
    Since $|x|\ge R$, $0\le 1-s\le 1$, and $\delta\le R/(\d-1)$, we get
    \begin{equation}
    \rho_{\varepsilon,\delta}(x)
    \ge \lambda + \frac{q}{\delta}-\frac{(\d-1)q}{R}
    \ge \lambda>0.
    \end{equation}
    Thus, $\rho_{\varepsilon,\delta}\ge 0$ everywhere.

    Also,
    \begin{equation}
    \int_{\T^\d}\rho_{\varepsilon,\delta}
    = \int_{\T^\d}\mu_\varepsilon + \int_{\T^\d}\Delta u_{\varepsilon,\delta}
    =1+0=1,
    \end{equation}
    because $u_{\varepsilon,\delta}$ is $C^1$ and periodic. This proves (i)--(ii).

    \medskip
    \noindent
    \ul{Step 2: uniform upper bound on the dissipation.}
    Because $\rho_{\varepsilon,\delta}=0$ in $B_R$ and $\nabla u_{\varepsilon,\delta}=0$ outside $B_{R+\delta}$, only the shell contributes to the dissipation:
    \begin{equation}
    \dissip (\rho_{\varepsilon,\delta}\mid \mu_\varepsilon)
    =\int_{A_{R,R+\delta}} |\nabla u_{\varepsilon,\delta}|^2\rho_{\varepsilon,\delta}\,dx.
    \end{equation}
    In the shell,
    \begin{equation}
    |\nabla u_{\varepsilon,\delta}(x)| = q(1-s)\le q,
    \end{equation}
    and
    \begin{equation}
    \rho_{\varepsilon,\delta}(x)
    \le \lambda + \frac{q}{\delta}
    \qquad (R<|x|<R+\delta),
    \end{equation}
    so
    \begin{equation}
    \dissip (\rho_{\varepsilon,\delta}\mid \mu_\varepsilon)
    \le |A_{R,R+\delta}| q^2\Big(\lambda+\frac{q}{\delta}\Big)=:C_{\d,\lambda,R,\delta}.
    \end{equation}
    This constant is independent of $\varepsilon$.

    \medskip
    \noindent
    \ul{Step 3: divergence of the energy.}
    Let
    \begin{equation}
    m_\varepsilon(r):=\int_{B_r}\mu_\varepsilon(x)\,dx.
    \end{equation}
    By radial Gauss law,
    \begin{equation}
    -v_\varepsilon'(r)=\frac{m_\varepsilon(r)}{\sigma_\d r^{\d-1}}
    \qquad (0<r<R).
    \end{equation}
    Hence the interior part of the Dirichlet energy is
    \begin{equation}
    \frac12\int_{B_R}|\nabla v_\varepsilon|^2\,dx
    = \frac{\sigma_\d}{2}\int_0^R r^{\d-1}|v_\varepsilon'(r)|^2\,dr
    = \frac{1}{2\sigma_\d}\int_0^R \frac{m_\varepsilon(r)^2}{r^{\d-1}}\,dr.
    \end{equation}
    Now $\eta_\varepsilon$ is supported in $B_\varepsilon$ and has total mass $1$, so for every $r\in[\varepsilon,R]$,
    \begin{equation}
    m_\varepsilon(r)\ge (1-\lambda)\int_{B_r}\eta_\varepsilon(x)\,dx = 1-\lambda.
    \end{equation}
    Therefore,
    \begin{equation}
    \mmd^2 (\rho_{\varepsilon,\delta},\mu_\varepsilon)
    \ge \frac12\int_{B_R}|\nabla v_\varepsilon|^2\,dx
    \ge \frac{(1-\lambda)^2}{2\sigma_\d}\int_\varepsilon^R r^{1-\d}\,dr.
    \end{equation}
    This equals
    \begin{equation}
    \frac{(1-\lambda)^2}{4\pi}\log\frac{R}{\varepsilon}
    \qquad (\d=2),
    \end{equation}
    and
    \begin{equation}
    \frac{(1-\lambda)^2}{2\sigma_\d(\d-2)}\big(\varepsilon^{2-\d}-R^{2-\d}\big)
    \qquad (\d\ge 3).
    \end{equation}
    Hence $\mmd^2(\rho_{\varepsilon,\delta},\mu_\varepsilon)\to\infty$ as $\varepsilon\downarrow0$. Since the dissipation remains bounded uniformly in $\varepsilon$, the ratio tends to $0$, which proves \cref{prop:counterexamplePLinfmu}.
    \end{proof}

\section{The PL inequality on \texorpdfstring{$\R^\d$}{Euclidean space} for radial data}\label{sec:radial-pl}

The goal of this section is to prove \cref{thm:connected-support-PL}, to the best of our knowledge the first PL inequality on $\R^\d$ for the Coulomb discrepancy, under radial symmetry and source-support inclusion. This is a deliberately restricted positive result in dimensions $\d\ge2$. \Cref{sec:whole-space-obstructions} contrasts it with global obstructions that arise when the source may be placed arbitrarily far from the target.
Radial symmetry reduces the Coulomb field, energy, and dissipation to one-dimensional expressions in the cumulative radial masses. We record these identities first; they underlie both the sharpness examples and the quantile-based proof of the positive result.

For a radial probability measure $f$, define its cumulative radial mass by
\begin{equation}
m_f(r):=f(\overline B_r)
=\int_{\R^\d}\indic_{\{|x|\le r\}}\,df(x),
\qquad r\ge0.
\end{equation}
If $f$ is absolutely continuous with radial density, again denoted by $f$, then
\begin{equation}
m_f(r)=\sigma_\d\int_0^r s^{\d-1}f(s)\,ds.
\end{equation}
Here, $\omega_\d:=|B_1|$ and $\sigma_\d:=|\mathbb S^{\d-1}|=\d\omega_\d$. The function $m_f$ is nondecreasing, bounded, and right-continuous, which is enough for the computations below.

If we set
\begin{equation}
M(r):=m_\rho(r)-m_\mu(r),
\end{equation}
since $\g$ is the fundamental solution of $-\D$, we have, for \(x\neq 0\),\footnote{Recall that the Laplacian of a radial function $f$ is $\D f(r) = r^{-(\d-1)}\partial_r (r^{\d-1} \partial_r f)$.}
\begin{equation}
E(x)\equiv -\nab\g\ast (\rho - \mu) (x)=\frac{M(|x|)}{\sigma_\d |x|^{\d-1}}\frac{x}{|x|}.
\end{equation}
Consequently,
\begin{align}
\mmd^2(\rho,\mu) &= \frac{1}{2\sigma_\d}\int_0^\infty \frac{M(r)^2}{r^{\d-1}}\,dr, \label{eq:radial-energy}\\
\dissip(\rho\mid\mu) &= \frac{1}{\sigma_\d}\int_0^\infty \frac{M(r)^2}{r^{\d-1}}\rho(r)\,dr. \label{eq:radial-diss}
\end{align}

\subsection{Sharpness of the support assumptions}

The support conditions (i)--(ii) in \cref{thm:connected-support-PL} are necessary, as the following counterexamples show.

The first condition asserts that there is no gap in the support of the target. In short, this connectedness assumption allows mass from the source to be transported evenly to the target, without having to jump over a gap where the target vanishes. 
\begin{prop}[Sharpness of condition i]\label{prop:sharpi}
    Fix \(\d\ge 2\). There exist bounded radial probability densities \(\mu\) and \(\rho_\delta\), all supported in \(B_1\), such that
\begin{equation}
0<\lambda\le \mu\le A<\infty \quad \text{almost everywhere on }\supp\mu,
\qquad
\supp\rho_\delta\subset\supp\mu,
\end{equation}
but
\begin{equation}
\frac{\mmd^2(\rho_\delta,\mu)}{\dissip(\rho_\delta\mid\mu)}\to\infty
\qquad \text{as } \delta\downarrow 0.
\end{equation}
\end{prop}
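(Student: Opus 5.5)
We would build the counterexample directly from the radial formulas \eqref{eq:radial-energy}--\eqref{eq:radial-diss}. The idea is to make the target support disconnected: an inner ball $B_{r_1}$ and an outer annulus $A_{r_2,1}$ separated by a gap $(r_1,r_2)$ where $\mu=0$. Then take a source that carries too much mass inside the inner piece and must ship the excess across the gap. Concretely, fix $0<r_1<r_2<1$ and let $\mu$ be a positive constant on $B_{r_1}\cup A_{r_2,1}$, normalized to a probability density. It is bounded above and below on its support, and $\supp\mu$ is disconnected.

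For the source, the plan is to prescribe the mass discrepancy $M=m_{\rho_\delta}-m_\mu$ rather than $\rho_\delta$ itself. We want $M(r)\approx \kappa>0$ throughout the gap, which means $\rho_\delta$ places extra mass $\kappa$ inside $B_{r_1}$ and correspondingly less on the outer annulus. The energy then satisfies
\begin{equation}
\mmd^2(\rho_\delta,\mu)\ge\frac{\kappa^2}{2\sigma_\d}\int_{r_1}^{r_2}r^{1-\d}\,dr,
\end{equation}
which stays bounded below uniformly in $\delta$. Since $\rho_\delta$ may vanish in the gap, the gap contributes nothing to $\dissip$. The whole difficulty is therefore to make the dissipation on the support small while keeping $M=\kappa$ across the gap.

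We would do this by localizing the mismatch near the edges of the gap in layers of width $\delta$ where $\rho_\delta$ is small. On $B_{r_1}$, set $\rho_\delta=\mu$ except on a thin shell $r_1-\delta<r<r_1$, where $\rho_\delta$ is made small, of order $\delta$, and let $\rho_\delta$ also carry a small excess concentrated so that $M$ rises to $\kappa$. The mechanism should mirror \cref{prop:counterexvanishingmu}: $M$ may only be nonzero where $\rho_\delta$ is small. To achieve this, the excess mass itself must sit where $M$ vanishes. Then, where $M\neq0$, $\rho_\delta\lesssim\delta$ and $|M|\le\kappa$, so $\dissip\lesssim\kappa^2\delta\cdot\delta$, which tends to zero.

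Explicitly, on $B_{r_1}$ we would take $\rho_\delta=\mu$ on $B_{r_1-2\delta}$ and put mass $\kappa+\mu(A_{r_1-2\delta,r_1})$ as a uniform layer on $A_{r_1-2\delta,r_1-\delta}$. With this choice $M$ goes from $0$ up to something positive on that layer. We then make $\rho_\delta=0$ on $A_{r_1-\delta,r_1}$, so that $M$ decreases from its peak back down to exactly $\kappa$ at $r_1$. The symmetric construction on the outer side sets $\rho_\delta=0$ on $A_{r_2,r_2+\delta}$, so $M$ decreases from $\kappa$ to about $\kappa-\mu(A_{r_2,r_2+\delta})$. This is followed by a dense layer on $A_{r_2+\delta,r_2+2\delta}$ that brings $M$ back to $0$, after which $\rho_\delta=\mu$.

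The main obstacle is that the dense layers carry density of order $\delta^{-1}$ on a set where $M$ is of order $\kappa$. On each such layer the dissipation is then of order $\kappa^2\delta^{-1}\cdot\delta=\kappa^2$, which does not vanish. We would fix this by exploiting that $\kappa$ is free: take $\kappa=\kappa_\delta\to0$ slowly, for instance $\kappa_\delta=\sqrt\delta$, and compare with the energy's $\kappa^2$ scaling. But both the energy and this dissipation scale like $\kappa^2$, so that does not help.

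Instead, the first thing to try is to place the dense layer inside $B_{r_1}$ where $M$ is still tiny. Replace the uniform layer by a profile in which the excess is delivered gradually, so that $\rho_\delta\approx\mu+O(1)$ over a region of width $O(1)$ where $M$ increases linearly. The dissipation there is $O(\kappa^2)$ against energy $\kappa^2\int_{\mathrm{gap}}r^{1-\d}$. That is still a bounded ratio, so the gap's energy must dominate. The remedy is to let the gap shrink toward a thin shell while the target pieces touch, that is, $r_2-r_1\to0$; this alone makes energy vanish, which is the wrong direction.

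So the correct mechanism should instead be a vacuum layer in the source with $M\approx\kappa$ on a region of width $O(1)$ inside the inner support, where $\rho_\delta=\delta$ (admissible, since only $\supp\rho_\delta\subset\supp\mu$ is required). The energy is then of order $\kappa^2$, while the dissipation is of order $\kappa^2\delta$ plus the contributions of the boundary layers. Verifying that the gap lets those boundary layers be placed where $M$ is small is the step we expect to require care.
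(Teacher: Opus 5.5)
Your target and the idea of trapping a constant discrepancy $M\equiv\kappa$ across the gap are right. However, the proposal never reaches a valid construction, and the mechanism you finally settle on cannot work.

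\textbf{The scaling error.} The decisive mistake is dismissing $\kappa=\kappa_\delta\to0$. A layer of width $\delta$ carrying excess mass $\kappa$ has density of order $\kappa/\delta$, not $\delta^{-1}$. Its dissipation is therefore of order $\kappa^2\cdot(\kappa/\delta)\cdot\delta=\kappa^3$, whereas the gap energy is of order $\kappa^2$. So the ratio scales like $\kappa^{-1}$.

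\textbf{Why $\kappa\to0$ is forced.} Since $\supp\rho_\delta\subset\supp\mu$, one has $M\equiv\kappa$ on $[r_1,r_2]$ and $M(0^+)=0$. Since $\mu\ge0$, $\rho_\delta\ge (M')_+/(\sigma_\d r^{\d-1})$. Also $r^{2-2\d}\ge1$ on $(0,r_1)$. Hence
\[
\dissip(\rho_\delta\mid\mu)\ge\frac{1}{\sigma_\d^2}\int_0^{r_1}M^2(M')_+\,r^{2-2\d}\,dr\ge\frac{1}{\sigma_\d^2}\int_0^{r_1}M^2M'\,dr=\frac{\kappa^3}{3\sigma_\d^2},
\]
no matter how thin the layers are or where they sit. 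The energy stays bounded in each of your constructions, so the ratio stays bounded for fixed $\kappa$.

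\textbf{Why the vacuum-layer plan fails.} The same computation, integrated up to the point where $M$ attains its maximum $P$ on $[0,r_1]$, gives $\dissip(\rho_\delta\mid\mu)\ge P^3/(3\sigma_\d^2)$. If $\rho_\delta=\delta$ on a region of width $O(1)$ inside $B_{r_1}$, then $M$ decreases across that region by roughly its $\mu$-mass. So $M$ cannot stay close to $\kappa$ there, its peak $P$ is of order one, and the dissipation is of order one. The boundary layers cannot be placed where $M$ is small, because $M$ sweeps through every value between $0$ and $P$ precisely where $\rho_\delta>\mu$.

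\textbf{The outer side is also infeasible.} For $\kappa\gg\delta$ (fixed, or $\sqrt\delta$), your outer-side recipe cannot be realized. Removing mass $\kappa$ from $A_{r_2,1}$ requires a region of $\mu$-mass at least $\kappa$ on which $\rho_\delta<\mu$. A dense layer inside $A_{r_2,r_2+2\delta}$ cannot supply this.

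\textbf{The missing idea and the fix.} The degeneration is a small-amplitude effect: energy is quadratic and dissipation cubic in the transported mass. The paper takes $\kappa_\delta:=\mu(A_{r_2,r_2+\delta})\asymp\delta$. It sets $\rho_\delta=0$ on $A_{r_2,r_2+\delta}$; there $M$ falls from $\kappa_\delta$ to $0$ at zero dissipation cost, since $\rho_\delta$ vanishes. It then spreads the mass $\kappa_\delta$ uniformly over the inner shell $A_{r_1-\delta,r_1}$ on top of $\mu$, so $\rho_\delta=O(1)$ there. This gives
\[
\mmd^2(\rho_\delta,\mu)\ge\frac{\kappa_\delta^2}{2\sigma_\d}\int_{r_1}^{r_2}r^{1-\d}\,dr\asymp\delta^2 .
\]
Only the inner shell contributes to the dissipation, with $|M|\le\kappa_\delta$ and bounded density, so $\dissip(\rho_\delta\mid\mu)\lesssim\kappa_\delta^2\delta\asymp\delta^3$. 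The ratio is of order $\delta^{-1}$.

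Your own first layered construction also works once $\kappa\asymp\delta$ with $\kappa\le\mu(A_{r_2,r_2+2\delta})$, since all layer densities are then $O(1)$ and $|M|\lesssim\delta$ on them. What must be abandoned is the energy lower bound uniform in $\delta$ that you set out to preserve.
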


\begin{proof}
Choose radii \(0<a<b<1\) and set
\begin{equation}
S:=B_a\cup (B_1\setminus B_b),
\qquad
c:=|S|^{-1},
\qquad
\mu:=c\,\indic_{S}.
\end{equation}
Thus, \(\mu\) is radial and constant on its support; in the proposition statement one may take \(\lambda=A=c\).

For small \(\delta>0\), define the inner and outer thin shells
\begin{equation}
I_\delta:=B_a\setminus B_{a-\delta},
\qquad
O_\delta:=B_{b+\delta}\setminus B_b.
\end{equation}
Remove the target mass from \(O_\delta\) and move it to \(I_\delta\):
\begin{equation}
\rho_\delta := c\,\indic_{S\setminus O_\delta}
+ c\frac{|O_\delta|}{|I_\delta|}\indic_{I_\delta}.
\end{equation}
Then \(\rho_\delta\) is a radial probability density supported in \(S=\supp\mu\), hence \(\supp\rho_\delta\subset\supp\mu\).

Let
\begin{equation}
\varepsilon_\delta:=\mu(O_\delta)=c|O_\delta|\sim C\delta.
\end{equation}
If
\begin{equation}
M_\delta(r):=m_{\rho_\delta}(r)-m_\mu(r),
\end{equation}
then by construction:
\begin{itemize}[leftmargin=2em]
    \item \(M_\delta(r)=0\) for \(0\le r\le a-\delta\);
    \item \(M_\delta\) rises from \(0\) to \(\varepsilon_\delta\) on \([a-\delta,a]\);
    \item \(M_\delta(r)=\varepsilon_\delta\) for all \(r\in[a,b]\);
    \item \(M_\delta\) falls from \(\varepsilon_\delta\) to \(0\) on \([b,b+\delta]\);
    \item \(M_\delta(r)=0\) for \(r\ge b+\delta\).
\end{itemize}
Hence
\begin{equation}
\mmd^2(\rho_\delta,\mu)
\ge \frac{\varepsilon_\delta^2}{2\sigma_\d}\int_a^b r^{1-\d}\,dr
\sim C\delta^2.
\end{equation}
For the dissipation, note that \(\rho_\delta=0\) on the whole gap \([a,b]\) and also on the outer shell \(O_\delta\). Therefore, the only contribution comes from the inner shell \(I_\delta\):
\begin{equation}
\dissip(\rho_\delta\mid\mu)
= \frac{1}{\sigma_\d}\int_{a-\delta}^a \frac{M_\delta(r)^2}{r^{\d-1}}\rho_\delta(r)\,dr.
\end{equation}
Since \(|M_\delta(r)|\le \varepsilon_\delta\), \(|O_\delta|/|I_\delta|=O(1)\) as \(\delta\downarrow0\), and \(|I_\delta|\sim C\delta\), the density \(\rho_\delta\) is uniformly bounded on \(I_\delta\), and one gets
\begin{equation}
\dissip(\rho_\delta\mid\mu)\le C\,\varepsilon_\delta^2 |I_\delta| \lesssim \delta^3.
\end{equation}
Thus, the ratio diverges like \(\delta^{-1}\).
\end{proof}

The second condition requires the source to be supported within the support of the target. If this condition is violated, mass must be transported across a region where the target vanishes; this can
result in a large energy discrepancy while the dissipation remains small. Put differently, the repulsive nature of the Coulomb potential is sufficiently strong to prevent the flow from ``pulling'' all the
mass of the source into the support of the target.

\begin{prop}[Sharpness of condition ii]\label{prop:sharpii}
Fix \(\d\ge 2\) and \(R>1\). Let \(\mu\) be any radial probability density such that
\begin{equation}
\supp\mu = B_1,
\qquad
0<\lambda\le \mu(x)\le A<\infty
\quad\text{for a.e. }x\in B_1.
\end{equation}
Then there exists a family of radial probability densities \((\rho_\delta)_{\delta>0}\), each supported in \(B_R\), such that
\begin{equation}
\mmd(\rho_\delta,\mu)\to 0,
\qquad
\frac{\mmd^2(\rho_\delta,\mu)}{\dissip(\rho_\delta\mid\mu)}\to\infty
\quad\text{as }\delta\downarrow 0.
\end{equation}
In particular, there is no inequality of the form
\begin{equation}
\mmd^2(\rho,\mu) \le C(\mu,R)\,\dissip(\rho\mid\mu)
\end{equation}
valid for all radial probability densities \(\rho\) supported in \(B_R\).
\end{prop}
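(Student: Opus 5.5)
We use the radial formulas \eqref{eq:radial-energy}--\eqref{eq:radial-diss} and build $\rho_\delta$ as a small perturbation of $\mu$. A small amount of mass $\varepsilon_\delta$ is removed from a thin layer of the target and placed as a very dilute thin layer in the annulus $\{1<|x|<R\}$, where $\mu=0$. The Coulomb field $M(r)/(\sigma_\d r^{\d-1})$ is then nonzero on a long radial interval. The dissipation, however, only sees $\rho_\delta$. If $\rho_\delta$ is small wherever $M\ne0$, the dissipation is much smaller than the energy. This mirrors \cref{prop:sharpi}, with the exterior region $\{\mu=0\}$ playing the role of the gap.

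\textbf{Construction.} Let $\mu$ be as in the statement and fix $1<b<R$. For $\delta>0$ small, set $I_\delta:=B_1\setminus B_{1-\delta}$ and $O_\delta:=B_{b+\delta}\setminus B_b$. Let $\varepsilon_\delta:=\mu(I_\delta)$; since $\lambda\le\mu\le A$ on $B_1$, we have $\varepsilon_\delta\asymp\delta$. Define
\begin{equation}
\rho_\delta:=\mu\,\indic_{B_{1-\delta}}+\frac{\varepsilon_\delta}{|O_\delta|}\indic_{O_\delta}.
\end{equation}
This is a bounded radial probability density supported in $B_R$. The density on $O_\delta$ equals $\varepsilon_\delta/|O_\delta|$, which is bounded uniformly in $\delta$. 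Then $M_\delta:=m_{\rho_\delta}-m_\mu$ behaves as follows. It vanishes on $[0,1-\delta]$. It decreases from $0$ to $-\varepsilon_\delta$ on $[1-\delta,1]$. It equals $-\varepsilon_\delta$ on $[1,b]$. It returns to $0$ on $[b,b+\delta]$, and it vanishes for $r\ge b+\delta$. In every case $|M_\delta|\le\varepsilon_\delta$.

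\textbf{Estimates.} By \eqref{eq:radial-energy},
\begin{equation}
\mmd^2(\rho_\delta,\mu)\ge\frac{\varepsilon_\delta^2}{2\sigma_\d}\int_1^b r^{1-\d}\,dr\gtrsim\delta^2.
\end{equation}
For an upper bound, $M_\delta$ is supported in $[1-\delta,b+\delta]$ and bounded by $\varepsilon_\delta$, so $\mmd^2(\rho_\delta,\mu)\lesssim\delta^2\to0$.

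By \eqref{eq:radial-diss}, the integrand contains $\rho_\delta(r)M_\delta(r)^2$. On $[0,1-\delta]$ we have $M_\delta=0$. On $[1-\delta,1]$ and on $(1,b)$ we have $\rho_\delta=0$, since the layer $I_\delta$ was emptied and $\mu$ vanishes outside $B_1$. Hence the only contribution comes from $[b,b+\delta]$. There $\rho_\delta=O(1)$, $|M_\delta|\le\varepsilon_\delta$ and $r^{1-\d}\le1$. This gives $\dissip(\rho_\delta\mid\mu)\lesssim\varepsilon_\delta^2\,\delta\lesssim\delta^3$.

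The ratio $\mmd^2(\rho_\delta,\mu)/\dissip(\rho_\delta\mid\mu)$ is therefore $\gtrsim\delta^{-1}\to\infty$, while $\mmd(\rho_\delta,\mu)\to0$. The final assertion follows immediately, because each $\rho_\delta$ is an admissible radial density supported in $B_R$.

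\textbf{Main point to check.} The key bookkeeping is to make sure $\rho_\delta$ vanishes on every region where $M_\delta\ne0$, apart from the outer deposit layer. Emptying the whole boundary layer $I_\delta$, rather than thinning it, guarantees this. The other step to verify is that $\varepsilon_\delta\asymp\delta$, which uses the two-sided bound $\lambda\le\mu\le A$. With $\varepsilon_\delta\asymp\delta$, the energy scales as $\delta^2$ and the dissipation as $\delta^3$.
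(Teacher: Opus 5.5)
Your proposal is correct and follows essentially the paper's argument. You empty the boundary layer $B_1\setminus B_{1-\delta}$ and redeposit its mass $\varepsilon_\delta\asymp\delta$ outside $B_1$, so that $M_\delta\equiv-\varepsilon_\delta$ on a gap where $\rho_\delta=0$, and only the deposit contributes to the dissipation. The one difference is the deposit: the paper spreads the mass over a fixed annulus with density $\varepsilon_\delta/|O|=O(\delta)$, whereas you use a $\delta$-thin shell of $O(1)$ density. Both give $\dissip\lesssim\varepsilon_\delta^3$. Note that your informal description of the deposit as ``very dilute'' fits the paper's version rather than your own, though your estimates are right.
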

\begin{proof}

Consider any radial target \(\mu\) supported in \(B_1\) with \(0<\lambda\le \mu\le A\) almost everywhere on \(B_1\).
Fix radii \(1<S<S+h<R\) and define the outer annulus
\begin{equation}
O:=B_{S+h}\setminus B_S,
\qquad
\eta:=\frac{1}{|O|}\indic_O.
\end{equation}
For \(\delta\in(0,1)\), let
\begin{equation}
\varepsilon_\delta:=\mu(B_1\setminus B_{1-\delta}).
\end{equation}
Because \(\lambda\le \mu\le A\) almost everywhere on \(B_1\), one has \(\varepsilon_\delta\asymp\delta\).
Now define
\begin{equation}
\rho_\delta := \mu\,\indic_{B_{1-\delta}} + \varepsilon_\delta\eta.
\end{equation}
Thus, \(\rho_\delta\) is radial, supported in \(B_R\), and differs from \(\mu\) by moving a thin outer shell of mass to the disjoint annulus \(O\).

Let
\begin{equation}
M_\delta(r):=m_{\rho_\delta}(r)-m_\mu(r).
\end{equation}
If \(r\in[1,S]\), then all the annulus mass still lies outside \(B_r\), so
\begin{equation}
m_{\rho_\delta}(r)=1-\varepsilon_\delta,
\qquad
m_\mu(r)=1,
\end{equation}
and therefore
\begin{equation}
M_\delta(r)=-\varepsilon_\delta
\qquad \text{for all } r\in[1,S].
\end{equation}
Using \eqref{eq:radial-energy},
\begin{equation}
\mmd^2(\rho_\delta,\mu)
\ge \frac{1}{2\sigma_\d}\int_1^S \frac{\varepsilon_\delta^2}{r^{\d-1}}\,dr
\sim C\varepsilon_\delta^2.
\end{equation}
In fact, the matching upper bound \(\mmd^2(\rho_\delta,\mu)\lesssim \varepsilon_\delta^2\) is also immediate from \eqref{eq:radial-energy}, so \(\mmd(\rho_\delta,\mu)\to 0\).

For the dissipation, note that \(\rho_\delta=0\) on the whole gap \([1-\delta,S]\). Thus, the only contribution comes from the outer annulus \([S,S+h]\), where \(\rho_\delta=\varepsilon_\delta/|O|\) and \(|M_\delta(r)|\le \varepsilon_\delta\). Therefore,
\begin{equation}
\dissip(\rho_\delta\mid\mu)
\le \frac{1}{\sigma_\d}\int_S^{S+h} \frac{\varepsilon_\delta^2}{r^{\d-1}}\frac{\varepsilon_\delta}{|O|}\,dr
\lesssim \varepsilon_\delta^3.
\end{equation}
Hence
\begin{equation}
\frac{\mmd^2(\rho_\delta,\mu)}{\dissip(\rho_\delta\mid\mu)}\gtrsim \varepsilon_\delta^{-1}\to\infty.
\end{equation}
\end{proof}

\begin{remark}[On condition iii]\label{rem:sharpiii}
    The one-dimensional mechanism in \cref{lem:one-dimensional-counterexample} suggests an analogous annular obstruction when a continuous radial target degenerates at an interior radius. Establishing such a counterexample requires controlling the radial Jacobian in both the energy and the dissipation, and we do not pursue that separate construction here. Thus \cref{prop:sharpi,prop:sharpii} establish sharpness of the two support hypotheses, while the sharpness of the uniform lower bound in the radial class remains open in the present analysis.
\end{remark}

\subsection{\texorpdfstring{Proof of \cref{thm:connected-support-PL}}{Proof of the radial PL theorem}}

Having established sharpness of the two support hypotheses, we turn to the positive estimate. The proof decomposes the energy and dissipation over radial mass labels and reduces the PL inequality to a pointwise shellwise bound, which is then integrated in the mass parameter.

For a radial probability density \(\nu\) supported in \(A_{a,R}\), define its radial quantile by
\begin{equation}
Q_\nu(\alpha):=\inf\{r\in[a,R]: m_\nu(r)\ge \alpha\},
\qquad \alpha\in(0,1).
\end{equation}
Since $m_\nu$ is right-continuous, $Q_\nu(\alpha)$ is well-defined for all $\alpha \in (0,1)$ and the infimum is attained. We always have \(m_\nu(Q_\nu(\alpha))\ge \alpha\) for every \(\alpha\in(0,1)\). Moreover, if \(\nu\) has a density with respect to the Lebesgue measure, then $Q_\nu(\alpha)>0$ and $m_\nu$ is continuous, with \(m_\nu(Q_\nu(\alpha))=\alpha\) for almost every \(\alpha\in(0,1)\).
For fixed \(\alpha\in(0,1)\), set
\begin{equation}
s_\alpha:=Q_\mu(\alpha),
\qquad
\Gamma_\alpha(r):=\frac{m_\mu(r)-\alpha}{\sigma_\d r^{\d-1}}
\quad (r>0),
\qquad
\Psi_\alpha(r):=\int_{s_\alpha}^r \Gamma_\alpha(t)\,dt
\quad (r>0).
\end{equation}
Observe that \(\Gamma_\alpha(s_\alpha)=0\) and \(\Gamma_\alpha\) is nonnegative on \([s_\alpha,R]\) and nonpositive on \((0,s_\alpha]\). Hence \(\Psi_\alpha\) is well-defined on $(0,R]$.

\begin{lemma}[Shell decomposition of energy and dissipation]\label{lem:shell-decomposition}
For radial probability densities \(\rho,\mu\), one has
\begin{align}
\mmd^2(\rho,\mu) &= \int_0^1 \Psi_\alpha\bigl(Q_\rho(\alpha)\bigr)\,d\alpha, \label{eq:shell-energy}\\
\dissip(\rho\mid\mu) &= \int_0^1 \Gamma_\alpha\bigl(Q_\rho(\alpha)\bigr)^2\,d\alpha. \label{eq:shell-diss}
\end{align}
\end{lemma}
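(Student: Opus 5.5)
The plan is to derive both identities from the radial formulas \eqref{eq:radial-energy}--\eqref{eq:radial-diss} by changing variables from the radius $r$ to the mass label $\alpha$. The basic tool is that the quantile $Q_\nu$ pushes Lebesgue measure on $(0,1)$ forward to the radial law $dm_\nu$ of $\nu$. Equivalently, for every $t>0$,
\begin{equation*}
\bigl|\{\alpha\in(0,1):Q_\nu(\alpha)<t\}\bigr|=m_\nu(t^-)=m_\nu(t),
\end{equation*}
where the last equality uses absolute continuity, so that $m_\nu$ is continuous. It follows that, up to a Lebesgue-null set of pairs $(\alpha,t)$, one has $\{Q_\nu(\alpha)\ge t\}=\{\alpha>m_\nu(t)\}$. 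I will use this for both $\nu=\rho$ and $\nu=\mu$, and record $s_\alpha=Q_\mu(\alpha)$.

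\emph{Dissipation.} First I rewrite \eqref{eq:radial-diss} intrinsically as
\begin{equation*}
\dissip(\rho\mid\mu)=\int_{(0,\infty)}\frac{M(r)^2}{\sigma_\d^2r^{2(\d-1)}}\,dm_\rho(r),
\end{equation*}
using $dm_\rho(r)=\sigma_\d r^{\d-1}\rho(r)\,dr$. The pushforward property then gives
\begin{equation*}
\dissip(\rho\mid\mu)=\int_0^1\frac{\bigl(m_\rho(Q_\rho(\alpha))-m_\mu(Q_\rho(\alpha))\bigr)^2}{\sigma_\d^2Q_\rho(\alpha)^{2(\d-1)}}\,d\alpha .
\end{equation*}
Since $\rho$ has a density, $m_\rho(Q_\rho(\alpha))=\alpha$ for a.e.\ $\alpha$ and $Q_\rho(\alpha)>0$. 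The integrand is therefore exactly $\Gamma_\alpha(Q_\rho(\alpha))^2$, which proves \eqref{eq:shell-diss}.

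\emph{Energy.} Here I expand $\Psi_\alpha(Q_\rho(\alpha))=\int_{s_\alpha}^{Q_\rho(\alpha)}\Gamma_\alpha(t)\,dt$ as
\begin{equation*}
\Psi_\alpha(Q_\rho(\alpha))=\int_0^\infty\Gamma_\alpha(t)\bigl(\indic_{\{t\le Q_\rho(\alpha)\}}-\indic_{\{t\le s_\alpha\}}\bigr)\,dt .
\end{equation*}
The integrand is nonnegative. Where the bracket equals $+1$ we have $t>s_\alpha$, so $\Gamma_\alpha(t)\ge0$; where it equals $-1$ we have $t\le s_\alpha$, so $\Gamma_\alpha(t)\le0$. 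Tonelli's theorem therefore lets me swap the $\alpha$ and $t$ integrals with no integrability issue. By the set identity above, the bracket equals $\indic_{\{\alpha>m_\rho(t)\}}-\indic_{\{\alpha>m_\mu(t)\}}$ for a.e.\ $(\alpha,t)$. For fixed $t$ the inner integral becomes
\begin{equation*}
\frac{1}{\sigma_\d t^{\d-1}}\int_0^1\bigl(m_\mu(t)-\alpha\bigr)\bigl(\indic_{\{\alpha>m_\rho(t)\}}-\indic_{\{\alpha>m_\mu(t)\}}\bigr)\,d\alpha=\frac{\bigl(m_\rho(t)-m_\mu(t)\bigr)^2}{2\sigma_\d t^{\d-1}} .
\end{equation*}
This holds in both orderings of $m_\rho(t)$ and $m_\mu(t)$, since it is $\int_{m_\rho}^{m_\mu}(m_\mu-\alpha)\,d\alpha$ up to the appropriate sign. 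Integrating in $t$ reproduces \eqref{eq:radial-energy}, which proves \eqref{eq:shell-energy}.

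The main point requiring care is the measure-theoretic bookkeeping rather than any estimate. Specifically, I must check that the set identity $\{Q_\nu\ge t\}=\{\alpha>m_\nu(t)\}$ fails only on a null set of pairs, which uses continuity of $m_\nu$ and the fact that flat pieces of $m_\nu$ produce only countably many jumps of $Q_\nu$. I must also check that the Tonelli argument is legitimate near $t\downarrow0$, where $\Gamma_\alpha$ is singular. Sign-definiteness settles the latter: both sides may equal $+\infty$ simultaneously, and the identity still holds.
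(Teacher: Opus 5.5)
Your proposal is correct and follows essentially the paper's route: the quantile pushforward identity $\int_0^1\varphi(Q_\rho(\alpha))\,d\alpha=\int\varphi\,dm_\rho$ with $m_\rho(Q_\rho(\alpha))=\alpha$ for the dissipation, and, for the energy, the indicator rewriting of $\Psi_\alpha(Q_\rho(\alpha))$, an interchange of integrals, and the scalar identity giving $\frac12(m_\rho(t)-m_\mu(t))^2$. Two small differences from the paper. You justify the interchange by Tonelli, using the sign of $\Gamma_\alpha$ times the bracket, whereas the paper uses Fubini with an absolute-integrability bound by $2\mmd^2(\rho,\mu)$; your version is a mild improvement because it does not presuppose finite energy. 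Also, the null-set bookkeeping you flag is immediate from the exact equivalence $Q_\nu(\alpha)\le t\iff\alpha\le m_\nu(t)$ for right-continuous $m_\nu$, which the paper uses implicitly.
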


\begin{proof}
We begin with the dissipation formula. Let $\nu_\rho:=(|\cdot|)_\#(\rho\,dx)$ be the radial law of $\rho$. Its distribution function is $m_\rho$, and $d\nu_\rho(r)=\sigma_\d r^{\d-1}\rho(r)\,dr$. Since $Q_\rho$ is the generalized inverse of $m_\rho$, the quantile pushforward identity gives, for every nonnegative Borel function $\varphi$,
\begin{equation}\label{eq:radial-quantile-pushforward}
\int_0^1\varphi(Q_\rho(\alpha))\,d\alpha
=\int_0^\infty\varphi(r)\,\sigma_\d r^{\d-1}\rho(r)\,dr.
\end{equation}
Applying \eqref{eq:radial-quantile-pushforward} to the nonnegative Borel function
$\varphi(r)=\bigl((m_\rho(r)-m_\mu(r))/(\sigma_\d r^{\d-1})\bigr)^2$ for $r>0$, with an arbitrary value at $r=0$, and using \eqref{eq:radial-diss}, we obtain
\begin{equation}
\dissip(\rho\mid\mu)
= \int_0^1
\Bigl(\frac{m_\rho(Q_\rho(\alpha))-m_\mu(Q_\rho(\alpha))}{\sigma_\d Q_\rho(\alpha)^{\d-1}}\Bigr)^2\,d\alpha.
\end{equation}
The radial law $\nu_\rho$ is nonatomic, and therefore $m_\rho(Q_\rho(\alpha))=\alpha$ for almost every $\alpha\in(0,1)$. Consequently,
\begin{equation}
\dissip(\rho\mid\mu)
= \int_0^1
\Bigl(\frac{\alpha-m_\mu(Q_\rho(\alpha))}{\sigma_\d Q_\rho(\alpha)^{\d-1}}\Bigr)^2\,d\alpha
= \int_0^1 \Gamma_\alpha(Q_\rho(\alpha))^2\,d\alpha.
\end{equation}
This proves \eqref{eq:shell-diss}.

For the energy, fix \(\alpha\) and note that
\begin{equation}
\Psi_\alpha(Q_\rho(\alpha))
=
\int_0^\infty \Gamma_\alpha(r)
\bigl(\indic_{\{Q_\mu(\alpha)\le r\}}-\indic_{\{Q_\rho(\alpha)\le r\}}\bigr)\,dr.
\end{equation}
Integrating in \(\alpha\) and using Fubini\footnote{Since \(\alpha\) lies between \(m_\mu(r)\) and \(m_\rho(r)\) in the integrand, we have \(|m_\mu(r)-\alpha|\le |m_\mu(r)-m_\rho(r)|\). The absolute integral is thus bounded by \(\int_0^\infty \frac{|m_\mu(r)-m_\rho(r)|^2}{\sigma_\d r^{\d-1}}\,dr = 2\mmd^2(\rho,\mu) < \infty\).},
\begin{equation}
\int_0^1 \Psi_\alpha(Q_\rho(\alpha))\,d\alpha
=
\int_0^\infty \frac{1}{\sigma_\d r^{\d-1}}
\int_0^1 (m_\mu(r)-\alpha)
\bigl(\indic_{\{\alpha\le m_\mu(r)\}}-\indic_{\{\alpha\le m_\rho(r)\}}\bigr)
\,d\alpha\,dr.
\end{equation}
If \(a=m_\mu(r)\) and \(b=m_\rho(r)\), then a direct scalar computation gives
\begin{equation}
\int_0^1 (a-\alpha)(\indic_{\{\alpha\le a\}}-\indic_{\{\alpha\le b\}})\,d\alpha
= \frac12 (b-a)^2.
\end{equation}
Hence
\begin{equation}
\int_0^1 \Psi_\alpha(Q_\rho(\alpha))\,d\alpha
=
\frac{1}{2\sigma_\d}\int_0^\infty \frac{(m_\rho(r)-m_\mu(r))^2}{r^{\d-1}}\,dr
=\mmd^2(\rho,\mu),
\end{equation}
which proves \eqref{eq:shell-energy}.
\end{proof}

It therefore remains to compare $\Psi_\alpha$ with $\Gamma_\alpha^2$ uniformly in the mass label.

\begin{lemma}[Shellwise PL inequality]\label{lem:shellwise-PL}
Under the assumptions of \cref{thm:connected-support-PL}, for every \(\alpha\in(0,1)\) and every \(r\in[a,R]\cap(0,\infty)\),
\begin{equation}
\Psi_\alpha(r)
\le C_0\,\Gamma_\alpha(r)^2,
\qquad
C_0:=\max\Bigl\{\frac{1}{\lambda},\frac{\d^2A}{2\lambda^2}\Bigr\}.
\end{equation}
\end{lemma}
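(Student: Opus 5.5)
We prove the bound by direct one-variable estimates on $g_\alpha(t):=m_\mu(t)-\alpha$, so that $\Gamma_\alpha(t)=g_\alpha(t)/(\sigma_\d t^{\d-1})$. The two cases $r\ge s_\alpha$ and $r<s_\alpha$ will produce the two entries of $C_0$. The only input is that on $[a,R]$ the cumulative mass satisfies $m_\mu'(t)=\sigma_\d t^{\d-1}\mu(t)$ with $\lambda\le\mu\le A$. Since $m_\mu$ is then strictly increasing and continuous on $[a,R]$, we have $s_\alpha\in[a,R]$. Moreover, for $a\le t_1\le t_2\le R$,
\begin{equation}
\lambda\omega_\d(t_2^\d-t_1^\d)\le m_\mu(t_2)-m_\mu(t_1)\le A\omega_\d(t_2^\d-t_1^\d).
\end{equation}
All intervals between $r$ and $s_\alpha$ lie in $[a,R]$ because both endpoints do, so these bounds apply throughout. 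This is where connectedness of $\supp\mu$ enters: there is no gap on which $m_\mu$ is flat. We will use repeatedly the elementary inequalities
\begin{equation}
\d\, t_1^{\d-1}(t_2-t_1)\le t_2^\d-t_1^\d\le \d\, t_2^{\d-1}(t_2-t_1).
\end{equation}

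\emph{Case $r\ge s_\alpha$.} Here $\Gamma_\alpha\ge0$ on $[s_\alpha,r]$. For $t$ in this interval, the upper density bound gives
\begin{equation}
0\le g_\alpha(t)\le A\omega_\d(t^\d-s_\alpha^\d)\le A\omega_\d\,\d\, t^{\d-1}(t-s_\alpha).
\end{equation}
Hence $\Gamma_\alpha(t)\le A(t-s_\alpha)$, since $\d\omega_\d=\sigma_\d$. Integrating yields $\Psi_\alpha(r)\le \tfrac A2(r-s_\alpha)^2$. Conversely, the lower bound gives
\begin{equation}
g_\alpha(r)\ge\lambda\omega_\d(r^\d-s_\alpha^\d)\ge \lambda\omega_\d\, r^{\d-1}(r-s_\alpha),
\end{equation}
where we use only $r^\d-s^\d\ge r^{\d-1}(r-s)$. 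Therefore $\Gamma_\alpha(r)\ge \lambda(r-s_\alpha)/\d$. Combining the two bounds gives $\Psi_\alpha(r)\le \frac{\d^2A}{2\lambda^2}\Gamma_\alpha(r)^2$.

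\emph{Case $r<s_\alpha$.} Now $\Psi_\alpha(r)=\int_r^{s_\alpha}|\Gamma_\alpha(t)|\,dt$. Since $m_\mu$ is increasing, $|g_\alpha(t)|=\alpha-m_\mu(t)\le |g_\alpha(r)|$ for $t\ge r$, and so
\begin{equation}
\Psi_\alpha(r)\le \frac{|g_\alpha(r)|}{\sigma_\d}\int_r^{s_\alpha}t^{1-\d}\,dt\le \frac{|g_\alpha(r)|}{\sigma_\d}\,r^{1-\d}(s_\alpha-r).
\end{equation}
On the other hand, since $m_\mu(s_\alpha)=\alpha$,
\begin{equation}
|g_\alpha(r)|\ge\lambda\omega_\d(s_\alpha^\d-r^\d)\ge\lambda\sigma_\d\, r^{\d-1}(s_\alpha-r).
\end{equation}
This gives $(s_\alpha-r)\le |g_\alpha(r)|/(\lambda\sigma_\d r^{\d-1})$. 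Substituting, we obtain $\Psi_\alpha(r)\le \lambda^{-1}g_\alpha(r)^2/(\sigma_\d^2r^{2\d-2})=\lambda^{-1}\Gamma_\alpha(r)^2$. Taking the maximum over the two cases gives the constant $C_0$.

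The main obstacle is the second case when $a=0$ and $r\downarrow0$. There the naive bound $g_\alpha(t)\le g_\alpha(r)$ must be paired with $\int_r^{s_\alpha}t^{1-\d}\,dt$, which could a priori be large, for instance logarithmic when $\d=2$. The resolution is to bound this integral by $r^{1-\d}(s_\alpha-r)$ and to use the mean-value lower bound at the \emph{left} endpoint $r$. Then the factor $r^{\d-1}$ appears with exactly the power needed to cancel, uniformly as $r\to0$. In the first case the analogous issue is avoided by bounding $\Gamma_\alpha$ pointwise by $A(t-s_\alpha)$ rather than by its value at $r$. Care is also needed with the choice of mean-value endpoint in each direction, since the wrong choice loses a factor that blows up as $s_\alpha\to0$.
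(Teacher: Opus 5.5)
Your proposal is correct and follows essentially the same argument as the paper: the same split at $s_\alpha$, using $\Psi_\alpha(r)\le(s_\alpha-r)|\Gamma_\alpha(r)|$ and $|\Gamma_\alpha(r)|\ge\lambda(s_\alpha-r)$ for $r<s_\alpha$, and $\Gamma_\alpha(t)\le A(t-s_\alpha)$ and $\Gamma_\alpha(r)\ge\lambda(r-s_\alpha)/\d$ for $r\ge s_\alpha$. Writing everything in terms of $m_\mu(t)-\alpha$ with mean-value bounds on $t^\d$ only repackages the paper's integral estimates.
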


\begin{proof}
Fix \(\alpha\) and write \(s:=s_\alpha=Q_\mu(\alpha)\).

\smallskip
\noindent\emph{Case 1: \(r\le s\).}
For \(t\in[r,s]\),
\begin{equation}
|\Gamma_\alpha(t)|
= \frac{1}{\sigma_\d t^{\d-1}}\int_t^s \sigma_\d u^{\d-1}\mu(u)\,du.
\end{equation}
Since \(t\mapsto t^{-(\d-1)}\) is decreasing and the interval of integration shrinks as \(t\) increases, one has
\begin{equation}
|\Gamma_\alpha(t)|\le |\Gamma_\alpha(r)|
\qquad \text{for all } t\in[r,s].
\end{equation}
Therefore,
\begin{equation}
\Psi_\alpha(r)=\int_r^s |\Gamma_\alpha(t)|\,dt
\le (s-r)|\Gamma_\alpha(r)|.
\end{equation}
On the other hand, using \(\mu\ge \lambda\),
\begin{equation}
|\Gamma_\alpha(r)|
= \frac{1}{\sigma_\d r^{\d-1}}\int_r^s \sigma_\d u^{\d-1}\mu(u)\,du
\ge \lambda(s-r).
\end{equation}
Hence
\begin{equation}
\Psi_\alpha(r)\le \frac{1}{\lambda}\Gamma_\alpha(r)^2.
\end{equation}
\smallskip
\noindent\emph{Case 2: \(r\ge s\).}
For \(t\in[s,r]\),
\begin{equation}
\Gamma_\alpha(t)
= \frac{1}{\sigma_\d t^{\d-1}}\int_s^t \sigma_\d u^{\d-1}\mu(u)\,du
\le \frac{A}{\sigma_\d t^{\d-1}}\int_s^t \sigma_\d u^{\d-1}\,du
\le A(t-s).
\end{equation}
Integrating gives
\begin{equation}
\Psi_\alpha(r)=\int_s^r \Gamma_\alpha(t)\,dt
\le \frac{A}{2}(r-s)^2.
\end{equation}
Conversely, using \(\mu\ge \lambda\),
\begin{equation}
\Gamma_\alpha(r)
\ge \frac{\lambda}{\sigma_\d r^{\d-1}}\int_s^r \sigma_\d u^{\d-1}\,du
\ge \frac{\lambda}{\d}(r-s).
\end{equation}
Therefore,
\begin{equation}
\Psi_\alpha(r)
\le \frac{A}{2}(r-s)^2
\le \frac{\d^2A}{2\lambda^2}\Gamma_\alpha(r)^2.
\end{equation}
Combining the two cases yields the claim.
\end{proof}

\begin{proof}[Proof of \cref{thm:connected-support-PL}]
By \cref{lem:shell-decomposition,lem:shellwise-PL}, 
\begin{equation}
\mmd^2(\rho,\mu)
= \int_0^1 \Psi_\alpha(Q_\rho(\alpha))\,d\alpha
\le C_0\int_0^1 \Gamma_\alpha(Q_\rho(\alpha))^2\,d\alpha
=C_0\,\dissip(\rho\mid\mu),
\end{equation}
where we have used that $\supp\rho\subset \supp\mu$ in order to apply the shellwise inequality for every \(\alpha\).
Since
\begin{equation}
C_0 = \max\Bigl\{\frac{1}{\lambda},\frac{\d^2A}{2\lambda^2}\Bigr\},
\end{equation}
this is exactly the desired estimate.
\end{proof}

To pass from the static PL inequality to convergence of the flow, we verify that the radial dynamics preserve source-support inclusion and then use the energy--dissipation inequality.

\begin{proof}[Proof of \cref{cor:radialconvergence}]
Because the target and initial datum are radial, uniqueness implies that the constructed solution remains radial. Since $\rho_0,\mu\in L^\infty$ and have compact support, the associated velocity is Osgood continuous in space on bounded time intervals. The Lagrangian representation for continuity equations with Osgood vector fields \cite{AB08} therefore applies, and $\rho_t$ is the pushforward of $\rho_0$ by the unique flow.

The uniqueness of the radial flow preserves the ordering of radii. For almost every mass label $\alpha\in(0,1)$, let $R_t(\alpha)$ be the radial flow characteristic with
\begin{equation}
R_0(\alpha)=Q_{\rho_0}(\alpha).
\end{equation}
The mass enclosed by $R_t(\alpha)$ remains equal to $\alpha$, and radial Gauss law gives
\begin{equation}\label{eq:radial-shell-corollary}
\dot R_t(\alpha)=\frac{\alpha-m_\mu(R_t(\alpha))}{\sigma_\d R_t(\alpha)^{\d-1}}
\end{equation}
whenever $R_t(\alpha)>0$. If $a>0$, the right-hand side of \eqref{eq:radial-shell-corollary} is nonnegative at $R_t(\alpha)=a$, because $m_\mu(a)=0$, and it is nonpositive at $R_t(\alpha)=R$, because $m_\mu(R)=1$. Uniqueness for the scalar characteristic equation therefore makes $[a,R]$ invariant. If $a=0$, no inner-boundary formula is needed: radial characteristics have nonnegative radius, while the same outer barrier at $R$ applies. We conclude directly at the weak-solution level that
\begin{equation}\label{eq:support-confinement}
\supp\rho_t\subset A_{a,R}=\supp\mu\qquad\text{for every }t\ge0.
\end{equation}

Set $F(t)=\mmd^2(\rho_t,\mu)$. By \eqref{eq:weak-energy-inequality}, \eqref{eq:support-confinement}, and \cref{thm:connected-support-PL}, for $t\ge s>0$,
\begin{equation}
F(t)+\frac1{C_{\d,\lambda,A}}\int_s^tF(\tau)\,d\tau\le F(s).
\end{equation}
The integral form of Gr\"onwall's lemma yields
\begin{equation}
F(t)\le e^{-(t-s)/C_{\d,\lambda,A}}F(s).
\end{equation}
The compactly supported bounded data have finite Coulomb discrepancy, and the $\dot H^{-1}$ chain rule used after \eqref{eq:weak-energy-inequality} gives $F(s)\to F(0)$ as $s\downarrow0$. Sending $s\downarrow0$ proves the stated estimate.
\end{proof}

\section{Whole-space obstructions at spatial infinity}\label{sec:whole-space-obstructions}

In the unrestricted whole-space setting, the key obstruction is the time needed for a remote source to reach the target. We first prove the travel-time persistence estimate \cref{thm:whole-space-persistence} and derive its consequences for uniform convergence rates and global PL coercivity in \cref{cor:no-universal-rate,cor:dynamical-global-PL}; we then give the independent static obstruction \cref{prop:static-global-PL}, which does not use the flow. Throughout the section, $\mu\in\P(\R^\d)\cap L^\infty(\R^\d)$ is compactly supported. We write $\sigma_\d:=|\mathbb S^{\d-1}|$ (with $\sigma_1=2$) and use the standard representatives of the whole-space Coulomb kernel,
\begin{equation}\label{eq:whole-space-coulomb-kernel}
\g(x)=
\begin{cases}
-\frac12|x|, & \d=1,\\[0.5ex]
-\frac{1}{2\pi}\log|x|, & \d=2,\\[0.5ex]
\frac{1}{(\d-2)\sigma_\d}|x|^{2-\d}, & \d\ge3.
\end{cases}
\end{equation}
For compactly supported bounded densities, all the energies below are finite, and, with $E=-\nabla\g\ast(\rho-\mu)$,
\begin{equation}\label{eq:whole-space-dirichlet-energy}
\mmd^2(\rho,\mu)=\frac12\int_{\R^\d}|E|^2\,dx,
\qquad
\div E=\rho-\mu.
\end{equation}

\hypertarget{bkm:bounded-speed-persistence}{}
\bookmarksetupnext{dest=bkm:bounded-speed-persistence}
\subsection{Bounded-speed propagation and persistence}\label{subsec:bounded-speed-persistence}

The persistence theorem combines the energy inequality with three auxiliary estimates. We first bound the velocity and the resulting support displacement, then obtain a lower bound for the squared MMD of a source that remains separated from the target, and finally compare this with an upper bound for the initial energy of a uniformly localized source.

\begin{lemma}[Uniform velocity and support displacement]\label{lem:uniform-velocity-support}
Let $\rho_0\in\P(\R^\d)\cap L^\infty(\R^\d)$ be compactly supported, and let $\rho_t$ be the corresponding bounded solution. There is a constant
\begin{equation}
V=V\bigl(\d,\|\rho_0\|_{L^\infty},\|\mu\|_{L^\infty}\bigr)<\infty
\end{equation}
such that
\begin{equation}\label{eq:uniform-velocity}
\|\nabla\g\ast(\rho_t-\mu)\|_{L^\infty}\le V
\qquad\text{for every }t\ge0.
\end{equation}
Moreover, if $K:=\supp\mu$ and $D_0:=\dist(\supp\rho_0,K)$, then
\begin{equation}\label{eq:support-distance-propagation}
\dist(\supp\rho_t,K)\ge D_0-Vt
\qquad\text{for every }t\ge0.
\end{equation}
\end{lemma}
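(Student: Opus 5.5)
Our plan is to combine a time-uniform $L^\infty$ bound on $\rho_t$ with the near--far decomposition of the Coulomb field, and then integrate along characteristics.

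\emph{Step 1: a time-uniform $L^\infty$ bound.} \cref{prop:hyper}, passed to the limit in the construction, gives the estimate with initial-data term,
\begin{equation}
\|\rho_t\|_{L^\infty}\le\Bigl(e^{-t\|\mu\|_{L^\infty}}\|\rho_0\|_{L^\infty}^{-1}+\tfrac{1-e^{-t\|\mu\|_{L^\infty}}}{\|\mu\|_{L^\infty}}\Bigr)^{-1}.
\end{equation}
The right-hand side is a convex combination of the harmonic quantities $\|\rho_0\|_{L^\infty}^{-1}$ and $\|\mu\|_{L^\infty}^{-1}$, inverted. It is therefore bounded by
\begin{equation}
B:=\max\{\|\rho_0\|_{L^\infty},\|\mu\|_{L^\infty}\}
\end{equation}
for all $t\ge0$. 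The bounded solution is unique by \cref{prop:stability}, so it coincides with the constructed one, and the bound applies to it.

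\emph{Step 2: the field bound \eqref{eq:uniform-velocity}.} Set $f=\rho_t-\mu$, so that $\|f\|_{L^1}\le2$ and $\|f\|_{L^\infty}\le2B$.
\begin{itemize}
\item For $\d=1$, $|\g'|=\tfrac12$ gives $\|\nabla\g\ast f\|_{L^\infty}\le1$ directly.
\item For $\d\ge2$, we use $|\nabla\g(z)|=\sigma_\d^{-1}|z|^{1-\d}$ and split the convolution at radius $r$:
\begin{equation}
|\nabla\g\ast f(x)|\le \sigma_\d^{-1}\Bigl(\|f\|_{L^\infty}\int_{B_r}|z|^{1-\d}dz+r^{1-\d}\|f\|_{L^1}\Bigr)\le 2Br+2\sigma_\d^{-1}r^{1-\d}.
\end{equation}
Optimizing in $r$ gives $V=C_\d\max(B,1)^{(\d-1)/\d}$, which depends only on $\d$, $\|\rho_0\|_{L^\infty}$ and $\|\mu\|_{L^\infty}$.
\end{itemize}

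\emph{Step 3: support displacement \eqref{eq:support-distance-propagation}.} The velocity $v_t=-\nabla\g\ast(\rho_t-\mu)$ is log-Lipschitz by \cref{lem:loglip}, uniformly on bounded time intervals by Step 1. Hence it has a unique Osgood flow $X_t$, and $\rho_t=(X_t)_\#\rho_0$ by the Lagrangian representation of \cite{AB08}, as used in the proof of \cref{cor:radialconvergence}. From $|\dot X_t|\le V$ we get $|X_t(x)-x|\le Vt$. Thus $\supp\rho_t\subset X_t(\supp\rho_0)$, with $\supp\rho_0$ compact and $X_t$ continuous, lies in the closed $Vt$-neighborhood of $\supp\rho_0$. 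The triangle inequality then yields $\dist(\supp\rho_t,K)\ge D_0-Vt$, where the right-hand side is negative (and the claim trivial) once $t>D_0/V$.

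\emph{Main obstacle.} The only delicate point is justifying the Lagrangian representation for the weak solution. If invoking \cite{AB08} is problematic, we would instead run the argument on the smooth approximations $\rho_k$. These satisfy the same uniform bounds $B$ and $V$, so their characteristics move at speed at most $V$. Weak-$*$ convergence then preserves the support localization in the limit, since the set $\{\dist(\cdot,\supp\rho_0)\le Vt+o(1)\}$ is closed and the mollified data have supports shrinking to $\supp\rho_0$.
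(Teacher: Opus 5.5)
Your proposal is correct and follows the paper's proof essentially step for step. Both use the time-uniform bound $\|\rho_t\|_{L^\infty}\le\max\{\|\rho_0\|_{L^\infty},\|\mu\|_{L^\infty}\}$ from the $p=\infty$ case of \cref{prop:hyper}, then the near--far splitting of $\nabla\g\ast(\rho_t-\mu)$ optimized in the radius (with $\|\g'\|_{L^\infty}=1/2$ when $\d=1$), and finally the Osgood Lagrangian representation of \cite{AB08} giving $|X_t(x)-x|\le Vt$. Your convex-combination justification of the max bound is a more explicit version of the paper's one-line step, and the fallback via smooth approximations is not needed.
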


\begin{proof}
Taking $p=\infty$ in \eqref{eq:hyper_est} and passing to the constructed solution gives
\begin{equation}\label{eq:uniform-density-bound-section5}
\|\rho_t\|_{L^\infty}
\le M_*:=\max\{\|\rho_0\|_{L^\infty},\|\mu\|_{L^\infty}\}
\qquad (t\ge0).
\end{equation}
If $\d=1$, then $\|\g'\|_{L^\infty(\R)}=1/2$, and hence the left-hand side of \eqref{eq:uniform-velocity} is at most one. If $\d\ge2$, the usual near--far decomposition gives, for every $R>0$ and every $f\in L^1\cap L^\infty$,
\begin{equation}
\|\nabla\g\ast f\|_{L^\infty}
\le C_\d\bigl(R\|f\|_{L^\infty}+R^{1-\d}\|f\|_{L^1}\bigr).
\end{equation}
Optimizing in $R$ and using $\|\rho_t-\mu\|_{L^1}\le2$ together with \eqref{eq:uniform-density-bound-section5}, we obtain \eqref{eq:uniform-velocity} with a constant independent of time and of the location of $\supp\rho_0$.

By \cref{lem:loglip}, the velocity is Osgood continuous in space. The uniqueness and representation theorem for continuity equations with Osgood vector fields \cite{AB08} therefore gives a unique flow $X_t$ such that $\rho_t=(X_t)_\#\rho_0$. From \eqref{eq:uniform-velocity},
\begin{equation}
|X_t(x)-x|\le\int_0^t\|v_s\|_{L^\infty}\,ds\le Vt.
\end{equation}
Taking the distance to the fixed closed set $K$ and then the infimum over $x\in\supp\rho_0$ yields \eqref{eq:support-distance-propagation}.
\end{proof}

The support-displacement estimate identifies the relevant time scale; the next lemma converts continued separation from the target into a lower bound for the squared MMD.

\begin{lemma}[Annular squared-MMD lower bound]\label{lem:annular-mmd-lower}
Fix $x_\mu\in\supp\mu$ and $R_\mu\ge1$ such that $\supp\mu\subset B_{R_\mu}(x_\mu)$. If $\rho\in\P(\R^\d)\cap L^\infty(\R^\d)$ is compactly supported and
\begin{equation}
D:=\dist(\supp\rho,\supp\mu)>R_\mu,
\end{equation}
then
\begin{equation}\label{eq:annular-mmd-lower}
\mmd^2(\rho,\mu)
\ge \frac{1}{2\sigma_\d}\int_{R_\mu}^{D}r^{1-\d}\,dr.
\end{equation}
\end{lemma}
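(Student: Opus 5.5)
The plan is to use the Dirichlet form \eqref{eq:whole-space-dirichlet-energy}, $\mmd^2(\rho,\mu)=\frac12\int|E|^2\,dx$ with $\div E=\rho-\mu$, and restrict the integral to the annulus
\[
\mathcal A:=\{x: R_\mu<|x-x_\mu|<D\}.
\]
On this annulus, $\mu$ has no mass outside $B_{R_\mu}(x_\mu)$. For $\rho$, note that $x_\mu\in\supp\mu$, so every point of $\supp\rho$ is at distance at least $D$ from $x_\mu$. Hence $\supp\rho\cap B_D(x_\mu)=\emptyset$. Consequently, for every $r\in(R_\mu,D)$, the ball $B_r(x_\mu)$ contains all of $\mu$ and none of $\rho$, so
\[
(\rho-\mu)(B_r(x_\mu))=-1.
\]

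The key step is Gauss's law on spheres. Since $E\in L^2$ is the Coulomb field of a compactly supported bounded density, it is continuous, and the divergence theorem applies on each ball $B_r(x_\mu)$:
\[
\int_{\partial B_r(x_\mu)}E\cdot n\,dS=(\rho-\mu)(B_r(x_\mu))=-1.
\]
Cauchy--Schwarz on the sphere then gives
\[
1\le \Bigl(\int_{\partial B_r}|E|\,dS\Bigr)^2\le \sigma_\d r^{\d-1}\int_{\partial B_r}|E|^2\,dS,
\]
that is, $\int_{\partial B_r}|E|^2\,dS\ge \sigma_\d^{-1}r^{1-\d}$. Integrating this in $r$ over $(R_\mu,D)$ by the coarea formula in polar coordinates yields
\[
\mmd^2(\rho,\mu)\ge\frac12\int_{\mathcal A}|E|^2\,dx\ge\frac1{2\sigma_\d}\int_{R_\mu}^D r^{1-\d}\,dr,
\]
which is \eqref{eq:annular-mmd-lower}. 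When $\d=1$ the sphere is two points and $\sigma_1=2$, and the same argument applies verbatim.

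The only point needing care is justifying the flux identity for a merely bounded $\rho$. I would handle this by noting that $E$ is continuous (indeed log-Lipschitz by \cref{lem:loglip}) and that $\div E=\rho-\mu$ holds distributionally. Testing against a radial cutoff approximating $\indic_{B_r}$ gives the flux identity for almost every $r$, which is all the radial integration requires. Alternatively, one can simply mollify.
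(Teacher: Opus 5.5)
Your proposal is correct and matches the paper's proof essentially step for step: the flux identity $\int_{\partial B_r(x_\mu)}E\cdot n\,dS=-1$ for $R_\mu<r<D$ (using $x_\mu\in\supp\mu$ to keep $\supp\rho$ outside $B_D(x_\mu)$), Cauchy--Schwarz on each sphere, and integration in $r$ against the Dirichlet form of the energy. Your justification of the flux identity via continuity of $E$ (\cref{lem:loglip}) and radial cutoffs adds a detail the paper leaves implicit.
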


\begin{proof}
Let $E=-\nabla\g\ast(\rho-\mu)$. For $R_\mu<r<D$, the ball $B_r(x_\mu)$ contains all the target mass and none of the source mass. Thus, for almost every such $r$, the Gauss law and \eqref{eq:whole-space-dirichlet-energy} give
\begin{equation}
\int_{\partial B_r(x_\mu)}E\cdot n\,dS
=\int_{B_r(x_\mu)}(\rho-\mu)\,dx=-1.
\end{equation}
By Cauchy--Schwarz,
\begin{equation}
\int_{\partial B_r(x_\mu)}|E|^2\,dS
\ge \frac{1}{|\partial B_r|}
=\frac{1}{\sigma_\d r^{\d-1}}.
\end{equation}
Integrating in $r$ and using \eqref{eq:whole-space-dirichlet-energy} proves \eqref{eq:annular-mmd-lower}. For $\d=1$, the boundary consists of the two endpoints of the interval $B_r(x_\mu)$, and the same calculation gives the factor $\sigma_1=2$.
\end{proof}

To compare this persistence lower bound with the initial discrepancy uniformly over translated sources, we also need an upper bound based only on their common localization data.

\begin{lemma}[Initial squared-MMD upper bound for uniformly localized sources]\label{lem:localized-initial-upper}
Let $M,L<\infty$. There are constants $C<\infty$ and $D_*>2$, depending only on $\d$, $\mu$, $M$, and $L$, such that every compactly supported probability density $\rho$ satisfying
\begin{equation}
\|\rho\|_{L^\infty}\le M,
\qquad
\diam(\supp\rho)\le L,
\qquad
D:=\dist(\supp\rho,\supp\mu)\ge D_*,
\end{equation}
obeys
\begin{equation}\label{eq:localized-initial-upper}
\mmd^2(\rho,\mu)\le C\Phi_\d(D).
\end{equation}
\end{lemma}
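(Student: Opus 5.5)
Write $\mmd^2(\rho,\mu)=\frac12\iint\g(x-y)\,d(\rho-\mu)(x)\,d(\rho-\mu)(y)$, valid for compactly supported bounded densities. Expand it into three pieces:
\begin{equation}
\mmd^2(\rho,\mu)=\frac12 I(\rho)+\frac12 I(\mu)-\iint\g(x-y)\,d\rho(x)\,d\mu(y),
\end{equation}
where $I(\nu):=\iint\g(x-y)\,d\nu\,d\nu$. The plan is to bound each piece in terms of $\d,\mu,M,L$, plus an explicit $D$-dependent cross term.

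\textbf{Self-interactions.} $I(\mu)$ is a fixed finite number, because $\mu$ is bounded and compactly supported. For $I(\rho)$ I will use translation invariance: $I(\rho)\le\sup_x\int|\g(x-y)|\rho(y)\,dy$, with the integral taken over a set of diameter at most $L$. This is bounded using $\|\rho\|_{L^\infty}\le M$ and $\|\rho\|_{L^1}=1$.
\begin{itemize}
\item For $\d\ge3$, split into $|x-y|<1$ and $|x-y|\ge1$: the near part is bounded by $M$ times $\int_{B_1}|z|^{2-\d}\,dz$, and the far part by $1$.
\item For $\d=2$, $|\log|x-y||$ is controlled the same way near zero; far away it is at most $\log L$.
\item For $\d=1$, $|\g|\le L/2$ on the support.
\end{itemize}
So $|I(\rho)|\le C(\d,M,L)$.

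\textbf{Cross term.} For $x\in\supp\rho$ and $y\in\supp\mu\subset B_{R_\mu}(x_\mu)$, we have $D\le|x-y|\le D+L+2R_\mu$, where $R_\mu$ is as in \cref{lem:annular-mmd-lower}. This gives:
\begin{itemize}
\item For $\d=1$, $-\g(x-y)=\frac12|x-y|\le\frac12(D+L+2R_\mu)$, so the cross term is at most $C D$.
\item For $\d=2$, $-\g=\frac1{2\pi}\log|x-y|\le\frac1{2\pi}\log(2D)$ once $D\ge L+2R_\mu$. This gives $C\log D$ for $D\ge D_*$.
\item For $\d\ge3$, $\g>0$, so $-\iint\g\,d\rho\,d\mu\le0$.
\end{itemize}
Combining these, $\mmd^2\le C+C\Phi_\d(D)$. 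Since $\Phi_\d(D)\ge\log2$ for $D\ge D_*>2$ (and $\Phi_\d\equiv1$ when $\d\ge3$), the additive constant is absorbed into $C\Phi_\d(D)$.

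\textbf{Main obstacle: the logarithmic case.} The delicate point is signs when $\d\le2$. There the expansion mixes positive and negative contributions, so each of $I(\rho)$, $I(\mu)$ and the cross term must be controlled in absolute value rather than merely from above.

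For $I(\rho)$ with $\d=2$, I need both $\log^+$ and $\log^-$ bounds. The diameter gives $\log^+|x-y|\le\log^+L$. For $\log^-$, the $L^\infty$ bound gives $\int_{B_1}\log^-|z|\,M\,dz<\infty$.

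The only genuinely $D$-dependent piece is the cross term. Its upper bound uses only $\dist\le D+\mathrm{const}$, which holds because $\supp\rho$ has diameter at most $L$ and $\supp\mu$ is bounded.

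I also need the bilinear representation of $\mmd^2$ to be justified. For $\d\le2$ the kernel is not positive, but $\rho-\mu$ has zero total mass and both measures are compactly supported, so the Fourier and bilinear formulas agree. This is standard, and I will cite it as the definition used in \eqref{eq:whole-space-dirichlet-energy}.
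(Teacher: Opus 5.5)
Your proposal is correct and follows essentially the same route as the paper: the same three-term expansion, self-interactions bounded using the $L^\infty$ and mass bounds, the cross term controlled via $D\le|x-y|\le D+L+\diam(\supp\mu)$ when $\d\le2$, and the cross term discarded by positivity of $\g$ when $\d\ge3$. One point is overstated but harmless: the absolute-value control you flag as the main obstacle is not needed, since an upper bound on $\mmd^2$ only requires an upper bound on each term (the paper simply notes that both self-interactions are nonpositive when $\d=1$, and bounds only the positive part of $\g$ in the source self-interaction when $\d=2$).
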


\begin{proof}
Write
\begin{equation}\label{eq:energy-interaction-decomposition}
\mmd^2(\rho,\mu)
=\frac12\iint\g(x-y)\,d\rho(x)\,d\rho(y)
+\frac12\iint\g(x-y)\,d\mu(x)\,d\mu(y)
-\iint\g(x-y)\,d\rho(x)\,d\mu(y).
\end{equation}
Let $L_\mu:=\diam(\supp\mu)$. Since the two supports are compact, their distance is attained. It follows from the diameter bounds that
\begin{equation}\label{eq:cross-distance-range}
D\le |x-y|\le D+L+L_\mu
\qquad (x\in\supp\rho,\ y\in\supp\mu).
\end{equation}

If $\d=1$, the two self-interaction terms in \eqref{eq:energy-interaction-decomposition} are nonpositive, while \eqref{eq:cross-distance-range} gives
\begin{equation}
-\iint\g(x-y)\,d\rho(x)\,d\mu(y)
=\frac12\iint|x-y|\,d\rho(x)\,d\mu(y)
\le \frac12(D+L+L_\mu).
\end{equation}
This proves \eqref{eq:localized-initial-upper}.

If $\d=2$, the positive part of $\g(z)=-(2\pi)^{-1}\log|z|$ is integrable near the origin. Therefore, the $L^\infty$ and mass bounds give a uniform upper bound on the source self-interaction; the target self-interaction is a fixed finite number. By \eqref{eq:cross-distance-range},
\begin{equation}
-\iint\g(x-y)\,d\rho(x)\,d\mu(y)
\le \frac{1}{2\pi}\log(D+L+L_\mu).
\end{equation}
For $D\ge D_*$ this is bounded by $C\log D$.

Finally, suppose $\d\ge3$. The kernel is positive, so the cross term in \eqref{eq:energy-interaction-decomposition} can be discarded in an upper bound. Moreover,
\begin{equation}
\sup_x\int \g(x-y)\rho(y)\,dy
\le C_\d\left(M\int_{B_1}|z|^{2-\d}\,dz+1\right),
\end{equation}
and the target self-interaction is finite. Thus, the energy is bounded uniformly in $D$, which is \eqref{eq:localized-initial-upper} because $\Phi_\d(D)=1$.
\end{proof}

We now apply these estimates to the family in \cref{thm:whole-space-persistence}, choosing a time interval on which each source remains a fixed fraction of its initial distance from the target.

\begin{proof}[Proof of \cref{thm:whole-space-persistence}]
The constants in \cref{lem:uniform-velocity-support} may be chosen uniformly in $n$, because the initial $L^\infty$ norms have the common bound $M_0$. Let $V$ be such a common velocity bound and set
\begin{equation}
c_0:=\frac{1}{2(1+V)}.
\end{equation}
Then \eqref{eq:support-distance-propagation} implies
\begin{equation}
\dist(\supp\rho_{n,t},\supp\mu)
\ge D_n-Vt\ge\frac{D_n}{2}
\qquad (0\le t\le c_0D_n).
\end{equation}
For all sufficiently large $n$, \cref{lem:annular-mmd-lower} therefore gives
\begin{equation}\label{eq:persistence-annular-evaluation}
\mmd^2(\rho_{n,t},\mu)
\ge \frac{1}{2\sigma_\d}\int_{R_\mu}^{D_n/2}r^{1-\d}\,dr.
\end{equation}
The right-hand side is bounded below by $c_1D_n$ if $\d=1$, by $c_1\log D_n$ if $\d=2$, and by a positive constant $c_1$ if $\d\ge3$, after increasing $n_0$ if necessary. This proves the first inequality in \eqref{eq:persistence-comparison}.

The last inequality in \eqref{eq:persistence-comparison} follows from \cref{lem:localized-initial-upper} with $M=M_0$ and $L=L_0$. Since the data are compactly supported and bounded, the energy inequality \eqref{eq:weak-energy-inequality} extends to the initial time, as explained after that display. Hence
\begin{equation}
\mmd^2(\rho_{n,t},\mu)\le\mmd^2(\rho_{0,n},\mu),
\end{equation}
which completes \eqref{eq:persistence-comparison}. Dividing the first and last bounds gives \eqref{eq:persistence-fraction}.
\end{proof}

\subsection{Consequences for convergence rates and PL inequalities}

The persistence estimate has two immediate but logically distinct consequences. Applying it to translations of a fixed profile rules out a decay modulus uniform over the source location; averaging the dissipation over the same travel-time interval produces states whose dissipation-to-energy ratio tends to zero, ruling out global PL coercivity.

\begin{proof}[Proof of \cref{cor:no-universal-rate}]
Choose translations $a_n$ such that
\begin{equation}
D_n:=\dist(\supp\theta(\cdot-a_n),\supp\mu)\longrightarrow\infty.
\end{equation}
The translated data have a common $L^\infty$ norm and support diameter, so \cref{thm:whole-space-persistence} applies. Set $T_n=c_0D_n$. If the asserted estimate held, then \eqref{eq:persistence-fraction} would imply
\begin{equation}
\eta\,\mmd^2(\rho_{0,a_n},\mu)
\le \mmd^2(\rho_{a_n,T_n},\mu)
\le \gamma(T_n)\mmd^2(\rho_{0,a_n},\mu).
\end{equation}
Since the initial energy is positive, $\gamma(T_n)\ge\eta$. Since $T_n\to\infty$, this contradicts $\gamma(t)\to0$.
\end{proof}

\begin{proof}[Proof of \cref{cor:dynamical-global-PL}]
Let $T_n=c_0D_n$ and let $\eta>0$ be given by \eqref{eq:persistence-fraction}. The energy inequality from the initial time gives
\begin{equation}
\int_0^{T_n}\dissip(\rho_{n,t}\mid\mu)\,dt
\le \mmd^2(\rho_{0,n},\mu)-\mmd^2(\rho_{n,T_n},\mu)
\le (1-\eta)\mmd^2(\rho_{0,n},\mu).
\end{equation}
Consequently, there exists a time $t_n\in(0,T_n)$ such that
\begin{equation}
\dissip(\rho_{n,t_n}\mid\mu)
\le \frac{1-\eta}{T_n}\mmd^2(\rho_{0,n},\mu).
\end{equation}
Using again \eqref{eq:persistence-fraction},
\begin{equation}
\frac{\dissip(\rho_{n,t_n}\mid\mu)}{\mmd^2(\rho_{n,t_n},\mu)}
\le \frac{1-\eta}{\eta c_0D_n},
\end{equation}
which proves \eqref{eq:dynamical-PL-ratio}. The maximum principle gives a common $L^\infty$ bound on all the selected states, while \cref{lem:uniform-velocity-support} keeps them compactly supported. They therefore contradict any global PL inequality on the stated class.
\end{proof}

\begin{remark}\label{rem:scope-whole-space-obstructions}
The preceding results do not rule out convergence for a fixed initial datum, nor rates whose constants or waiting times depend on the support location, moments, or other localization information. In fact, \cref{thm:planar-convergence} shows that in dimension two every solution constructed here whose Coulomb energy is finite at some positive time converges to the target narrowly, weak-$*$ in $L^\infty$, and strongly in $H^{-\alpha}$ for every $\alpha>0$. There is no contradiction: the translated planar sources used above have Coulomb energy of order $\log D$ and therefore do not remain in a fixed energy sublevel as their separation tends to infinity. The obstructions also do not rule out a nonradial PL inequality under the common connected-support hypotheses of \cref{thm:connected-support-PL}. They show that any unrestricted whole-space statement must account for the time needed to transport mass across a large spatial separation.
\end{remark}

\subsection{A direct static PL obstruction}

The dynamical argument above produces bad states along trajectories that remain remote from the target. We now prove the same failure directly by constructing explicit smooth test densities, without invoking the flow.

\begin{proof}[Proof of \cref{prop:static-global-PL}]
Fix a nonnegative probability density $\theta\in C_c^\infty(B_1)$. We treat the three Coulomb regimes separately.

\smallskip
\noindent\emph{Dimension $\d=1$.}
For $R>0$, set $\rho_R(x)=\theta(x-R)$. Since $\g(x)=-|x|/2$ and the target is compactly supported, the two self-interaction terms in \eqref{eq:energy-interaction-decomposition} are independent of $R$, while
\begin{equation}
-\iint\g(x-y)\rho_R(x)\mu(y)\,dx\,dy
=\frac12\iint|x-y|\rho_R(x)\mu(y)\,dx\,dy
\ge cR
\end{equation}
for all sufficiently large $R$. Hence $\mmd^2(\rho_R,\mu)\ge cR$. On the other hand, $\|\g'\|_{L^\infty}=1/2$, so
\begin{equation}
\dissip(\rho_R\mid\mu)
\le \|\g'\ast(\rho_R-\mu)\|_{L^\infty}^2\le1.
\end{equation}
The ratio in \eqref{eq:static-PL-ratio} therefore tends to zero.

\smallskip
\noindent\emph{Dimension $\d=2$.}
Let $e_1$ be the first coordinate vector and set $\rho_R(x)=\theta(x-Re_1)$. Uniformly for $x\in\supp\rho_R$ and $y\in\supp\mu$,
\begin{equation}
\log|x-y|=\log R+O(R^{-1}).
\end{equation}
The source self-interaction is independent of $R$ and the target self-interaction is fixed, so \eqref{eq:energy-interaction-decomposition} yields
\begin{equation}
\mmd^2(\rho_R,\mu)=\frac{1}{2\pi}\log R+O(1).
\end{equation}
The source self-field on $\supp\rho_R$ is a translate of the bounded field $\nabla\g\ast\theta$ on $B_1$, whereas
\begin{equation}
\sup_{x\in\supp\rho_R}|\nabla\g\ast\mu(x)|\le \frac{C}{R}.
\end{equation}
Thus, $\dissip(\rho_R\mid\mu)\le C$, and the ratio again tends to zero.

\smallskip
\noindent\emph{Dimensions $\d\ge3$.}
A pure translation does not make the source self-field small in dimensions $\d\ge3$. We therefore also dilate the profile while placing its center at distance of order $r^2$ from the target.
For $r\ge2$, put $a_r=r^2e_1$ and
\begin{equation}
\rho_r(x):=r^{-\d}\theta\left(\frac{x-a_r}{r}\right).
\end{equation}
By scaling,
\begin{align}
\iint\g(x-y)\rho_r(x)\rho_r(y)\,dx\,dy
&=r^{2-\d}\iint\g(x-y)\theta(x)\theta(y)\,dx\,dy,\label{eq:static-self-energy-scaling}\\
\int|\nabla\g\ast\rho_r|^2\rho_r\,dx
&=r^{2-2\d}\int|\nabla\g\ast\theta|^2\theta\,dx.\label{eq:static-self-diss-scaling}
\end{align}
Moreover, the distance from $\supp\rho_r$ to $\supp\mu$ is comparable to $r^2$, and therefore
\begin{equation}
\sup_{x\in\supp\rho_r}\left(|\g\ast\mu(x)|+|\nabla\g\ast\mu(x)|\right)
\le C\left(r^{4-2\d}+r^{2-2\d}\right).
\end{equation}
It follows from \eqref{eq:energy-interaction-decomposition} and \eqref{eq:static-self-energy-scaling} that
\begin{equation}
\mmd^2(\rho_r,\mu)
\longrightarrow \frac12\iint\g(x-y)\mu(x)\mu(y)\,dx\,dy>0.
\end{equation}
Using \eqref{eq:static-self-diss-scaling} and the target-field estimate,
\begin{equation}
\dissip(\rho_r\mid\mu)
\le 2r^{2-2\d}\int|\nabla\g\ast\theta|^2\theta\,dx
+Cr^{4-4\d}
\longrightarrow0.
\end{equation}
This completes the proof in every dimension.
\end{proof}

\begin{remark}\label{rem:compare-static-sharpness}
The static construction above is different from \cref{prop:sharpii}. That proposition is a bounded-region, radial counterexample in which $\mmd(\rho_\delta,\mu)\to0$. By contrast, \cref{prop:static-global-PL} is an obstruction at spatial infinity, applies to every compactly supported bounded target, and does not require $\mmd(\rho_n,\mu)\to0$. The dynamical statement \cref{cor:dynamical-global-PL} adds the travel-time interpretation absent from both static constructions.
\end{remark}

\section{Criticality and planar convergence}\label{sec:criticality-planar}

The whole-space obstructions in the preceding section rule out decay rates and PL constants that are uniform over unrestricted initial data, but they do not determine the asymptotic behavior of a fixed solution. We next prove the rigidity result \cref{thm:lagrangian-rigidity} and use it, together with planar tightness, to establish \cref{thm:planar-convergence}. We conclude with a static example showing why these compactness arguments do not by themselves yield convergence in the Coulomb topology $\dot H^{-1}$.

\subsection{Rigidity under absolute continuity of the positive part}

Whenever the Coulomb first variation is defined, the Lagrangian criticality condition for $\mmd^2(\cdot,\mu)$ is
\begin{equation}\label{eq:lagrangian-criticality}
\nabla\g\ast(\rho-\mu)=0
\qquad \rho\text{-a.e.}
\end{equation}
For a singular source, this condition requires a specified representative of the field. The analytic statement in \cref{thm:lagrangian-rigidity} only requires the field to vanish almost everywhere with respect to $(\rho-\mu)^+$. Since the theorem assumes that $(\rho-\mu)^+$ is absolutely continuous with respect to Lebesgue measure, this weaker condition is independent of the representative on Lebesgue-null sets. On $\R^2$, a literal logarithmic convolution may require a logarithmic moment. This is why the theorem is formulated through the distributional Poisson equation for a Coulomb potential.

\begin{proof}[Proof of \cref{thm:lagrangian-rigidity}]
Write $(\rho-\mu)^+=f\,dx$ and set $A:=\{f>0\}$. By the mutual singularity of the positive and negative parts of $\rho-\mu$, the absolutely continuous part of $(\rho-\mu)^-$ vanishes on $A$. The condition in \eqref{eq:lagrangian-critical-vanishing} and the positivity of $f$ on $A$ give
\begin{equation}\label{eq:critical-gradient-on-positive-set}
\nabla h=0
\qquad \mathcal{L}^\d\text{-a.e. on }A.
\end{equation}

Suppose first that $\d\ge2$. By \cite[Theorem~1.1]{APR20}, the absolutely continuous part of the distributional Laplacian of a locally integrable function vanishes almost everywhere on each vector level set of its gradient. On the other hand, the Poisson equation and the Jordan decomposition give
\begin{equation}
\bigl(\Delta h\bigr)^{\mathrm a}
=-f\,dx
\qquad\text{on }A.
\end{equation}
Applying the cited result to the level set $\{\nabla h=0\}$, locally after periodic lifting when $\Omega=\T^\d$, and using \eqref{eq:critical-gradient-on-positive-set}, we conclude that $f=0$ almost everywhere on $A$. Thus $(\rho-\mu)^+=0$.

If $\d=1$, set $v:=\partial_xh$. Since
\begin{equation}
Dv=(\rho-\mu)^--(\rho-\mu)^+
\end{equation}
is a locally finite signed measure, $v\in BV_{\mathrm{loc}}$. By \cite[Theorem~3.83]{AFP00}, $v$ is approximately differentiable\footnote{Following \cite{APR20} (cf. \cite[Definition~3.70]{AFP00}), a measurable function $v$ is approximately differentiable at $x$ if there exist $\widetilde v(x),a\in\R$ such that, for every $\varepsilon>0$,
\begin{equation}
\lim_{r\downarrow0}
\frac{
\mathcal{L}^1\bigl(
\{y\in B_r(x):
 |v(y)-\widetilde v(x)-a(y-x)|
 >\varepsilon|y-x|\}
\bigr)}
{\mathcal{L}^1(B_r(x))}
=0.
\end{equation}
The number $a$, denoted by $D_{\mathrm{ap}}v(x)$, is the approximate derivative of $v$ at $x$, while $\widetilde v(x)$ is its approximate limit.} at $\mathcal{L}^1$-almost every point, and
\begin{equation}
(Dv)^{\mathrm a}=D_{\mathrm{ap}}v\,\mathcal{L}^1.
\end{equation}
Moreover, for every $c\in\R$,
\begin{equation}
D_{\mathrm{ap}}v=0
\qquad
\mathcal{L}^1\text{-a.e. on }\{v=c\};
\end{equation}
see \cite[Theorem~6.3]{EG15} and also \cite[Section~2, equation~(2.1)]{APR20}. Since $(Dv)^{\mathrm a}=-f\,dx$ on $A$, applying this locality property on $\{v=0\}$ and using \eqref{eq:critical-gradient-on-positive-set} again gives $f=0$ almost everywhere on $A$. Thus $(\rho-\mu)^+=0$ also in this case.

In every dimension, we therefore have
\begin{equation}\label{eq:critical-measure-domination}
\mu-\rho=(\rho-\mu)^-\ge0.
\end{equation}
Its total mass is zero because both measures are probabilities, so it vanishes identically.
\end{proof}

\begin{remark}[Variational scope]\label{rem:variational-lagrangian-criticality}
In the usual finite-energy setting one takes $h=\g\ast(\rho-\mu)$, up to an irrelevant additive constant. Every Lagrangian critical point satisfies the weaker vanishing condition in \eqref{eq:lagrangian-critical-vanishing}, since $(\rho-\mu)^+\le\rho$. Thus \cref{thm:lagrangian-rigidity} accommodates an arbitrary probability target and any source for which $(\rho-\mu)^+$ is absolutely continuous whenever $h=\g\ast(\rho-\mu)$ is well defined. By contrast, the distributional stationarity equation
\begin{equation}
\div\bigl(\rho\nabla h\bigr)=0
\end{equation}
is weaker than Lagrangian criticality and is not covered by the theorem without an additional tangent-space or energy argument.
\end{remark}

\begin{remark}[Related critical-point results]
\label{rem:BV-critical-points}
Boufad\`ene and Vialard distinguish Lagrangian critical points from the stronger notion of a Wasserstein critical point, defined by requiring the source itself to minimize every sufficiently small-step JKO problem. Their Proposition~3.3 shows that a Wasserstein critical point has $0$ in its Wasserstein subdifferential; combined with their characterization of the Coulomb subgradient, this implies Lagrangian criticality \cite[Lemma~2.4, Definitions~3.1--3.2, and Proposition~3.3]{BV25}. Their Theorem~3.4 proves, in the finite-energy Coulomb setting, that a Lagrangian critical point agrees with the target on the interior of its support \cite[Theorem~3.4]{BV25}. This does not yield global equality even for an absolutely continuous source, since a positive-measure closed set may have empty interior. \Cref{thm:lagrangian-rigidity} closes this gap more generally whenever $(\rho-\mu)^+$ is absolutely continuous; in particular, it includes densities supported on fat-Cantor-type sets.

Boufad\`ene and Vialard also obtain restrictions on singular Wasserstein critical points in the Euclidean finite-Coulomb-energy setting. Write
\begin{equation}
\rho-\mu=(\rho-\mu)^+-(\rho-\mu)^-
\end{equation}
for the Jordan decomposition and set $\sigma:=(\rho-\mu)^+$. Their Lemma~4.9 shows that $\rho$ is not Wasserstein critical if there exist a set $A$ with $\sigma(A)>0$, a number $0<\delta<2$, and constants $C,r_0>0$ such that
\begin{equation}
\sigma(B(x,r))\ge Cr^{\d-\delta}
\end{equation}
for $\sigma$-almost every $x\in A$ and every $0<r\le r_0$ \cite[Lemma~4.9]{BV25}. Their Remark~4.11 records, in particular, that the criterion applies when $\sigma$ is absolutely continuous with respect to surface measure on a $(\d-1)$-dimensional submanifold \cite[Remark~4.11]{BV25}. Thus their result excludes certain sufficiently lower-dimensional cases for $\sigma$ under the stronger JKO-based notion, whereas \cref{thm:lagrangian-rigidity} excludes every Lagrangian critical point for which $(\rho-\mu)^+$ is absolutely continuous. Configurations for which $(\rho-\mu)^+$ has a singular component remain outside this combined classification.
\end{remark}

\begin{remark}\label{rem:singular-lagrangian-criticality}
The hypothesis $(\rho-\mu)^+\ll dx$ in \cref{thm:lagrangian-rigidity} cannot be omitted without further assumptions. For example, on $\R$ let
\begin{equation}
\rho=\delta_0,
\qquad
\mu=\frac12\bigl(\delta_{-1}+\delta_1\bigr).
\end{equation}
For the one-dimensional Coulomb kernel $\g(x)=-|x|/2$, the symmetric precise representative of the corresponding field vanishes at the point charged by $\rho$, although $\rho\ne\mu$. Here $(\rho-\mu)^+=\delta_0$ is singular. A theory for Lagrangian critical points for which $(\rho-\mu)^+$ is singular must therefore specify the representative of the field and incorporate capacity or dimension information.
\end{remark}

\subsection{Logarithmic capacity and tightness}
\label{ssec:planar-log-capacity}

We next specialize to $\R^2$, where
\begin{equation}
\g(x)=-\frac{1}{2\pi}\log|x|.
\end{equation}
The vanishing logarithmic capacity of infinity provides a tightness estimate from the $L^2$ norm of the Coulomb field.

\begin{lemma}[Logarithmic-capacity tightness]\label{lem:planar-log-tightness}
Let $\rho,\mu\in\P(\R^2)$, and suppose that $E\in L^2(\R^2;\R^2)$ satisfies
\begin{equation}
\div E=\rho-\mu
\quad\text{in }\mathcal D'(\R^2).
\end{equation}
Then, for every $R>1$,
\begin{equation}\label{eq:planar-log-tightness}
\rho(B_{R^2}^c)
\le
\mu(B_R^c)
+\left(\frac{2\pi}{\log R}\right)^{1/2}\|E\|_{L^2}.
\end{equation}
In particular, if $E=-\nabla\g\ast(\rho-\mu)$ and the Coulomb energy is finite, then
\begin{equation}\label{eq:planar-log-tightness-mmd}
\rho(B_{R^2}^c)
\le
\mu(B_R^c)
+2\sqrt{\frac{\pi\mmd^2(\rho,\mu)}{\log R}}.
\end{equation}
\end{lemma}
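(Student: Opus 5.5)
The plan is to test the divergence equation against a logarithmic cutoff that switches from $0$ to $1$ across the annulus $B_{R^2}\setminus B_R$. This is the standard capacity-minimizing profile in the plane, and its Dirichlet energy is $2\pi/\log R$. Concretely, I take
\begin{equation*}
\psi(x):=\begin{cases}1,&|x|\le R,\\[0.5ex] \dfrac{\log(R^2/|x|)}{\log R},&R\le|x|\le R^2,\\[1ex] 0,&|x|\ge R^2,\end{cases}
\qquad \chi:=1-\psi .
\end{equation*}
Then $\psi$ is Lipschitz with compact support, $0\le\chi\le1$, $\chi=0$ on $B_R$, and $\chi=1$ outside $B_{R^2}$. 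A polar-coordinate computation gives
\begin{equation*}
\|\nabla\psi\|_{L^2}^2=\int_R^{R^2}\frac{2\pi r\,dr}{r^2(\log R)^2}=\frac{2\pi}{\log R}.
\end{equation*}

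First I justify that $\int\psi\,d(\rho-\mu)=-\int E\cdot\nabla\psi\,dx$. The identity holds for $C_c^\infty$ test functions by definition of $\div E=\rho-\mu$ in $\mathcal D'(\R^2)$. To pass to $\psi$, mollify: $\psi\ast\varphi_k\to\psi$ uniformly, which suffices on the left because $\rho$ and $\mu$ are finite measures. On the right, $\nabla(\psi\ast\varphi_k)\to\nabla\psi$ in $L^2$, which suffices because $E\in L^2$. This approximation is the only genuinely technical point, and it is routine.

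Next I use that $\rho$ and $\mu$ have equal total mass, so $\int\chi\,d(\rho-\mu)=-\int\psi\,d(\rho-\mu)=\int E\cdot\nabla\psi\,dx$. Combining this with Cauchy--Schwarz gives
\begin{equation*}
\rho(B_{R^2}^c)\le\int\chi\,d\rho=\int\chi\,d\mu+\int E\cdot\nabla\psi\,dx\le\mu(B_R^c)+\Bigl(\frac{2\pi}{\log R}\Bigr)^{1/2}\|E\|_{L^2}.
\end{equation*}
Here the first inequality uses $\chi=1$ off $B_{R^2}$ and $\chi\ge0$, and the bound $\int\chi\,d\mu\le\mu(B_R^c)$ uses $\chi\le\indic_{B_R^c}$. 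This proves \eqref{eq:planar-log-tightness}.

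For \eqref{eq:planar-log-tightness-mmd}, take $E=-\nabla\g\ast(\rho-\mu)$. It satisfies $\div E=\rho-\mu$, and finite Coulomb energy means $\|E\|_{L^2}^2=2\mmd^2(\rho,\mu)$. Substituting, $\sqrt{2\pi/\log R}\cdot\sqrt{2\mmd^2}=2\sqrt{\pi\mmd^2(\rho,\mu)/\log R}$, which is exactly the stated constant.
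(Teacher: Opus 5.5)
Your proposal is correct and follows essentially the same route as the paper. Your $\chi$ is exactly the paper's logarithmic cutoff $\eta_R$, and your argument, like the paper's, uses equal masses, integration by parts against the compactly supported $1-\eta_R$, Cauchy--Schwarz with the Dirichlet energy $2\pi/\log R$, and $\|E\|_{L^2}^2=2\mmd^2(\rho,\mu)$. Your mollification step justifying the test against the merely Lipschitz $\psi$ is a small detail that the paper leaves implicit.
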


\begin{proof}
Define
\begin{equation}
\eta_R(x):=
\begin{cases}
0, & |x|\le R,\\[1mm]
\dfrac{\log(|x|/R)}{\log R}, & R<|x|<R^2,\\[2mm]
1, & |x|\ge R^2.
\end{cases}
\end{equation}
A direct computation gives
\begin{equation}\label{eq:log-cutoff-dirichlet}
\int_{\R^2}|\nabla\eta_R|^2\,dx
=
\frac{2\pi}{\log R}.
\end{equation}
Since $\rho$ and $\mu$ have equal mass, the first integral below is unchanged if $\eta_R$ is replaced by $\eta_R-1$. Since $1-\eta_R$ is compactly supported, we may integrate by parts and use Cauchy-Schwarz to obtain
\begin{equation}
\left|\int_{\R^2}\eta_R\,d(\rho-\mu)\right| = \left|\int_{\R^2}(\eta_R-1)\div E\,dx\right| = \left|\int_{\R^2}E\cdot\nabla\eta_R\,dx\right| \le \left(\frac{2\pi}{\log R}\right)^{1/2}\|E\|_{L^2}.
\end{equation}
Moreover, $\eta_R=1$ on $B_{R^2}^c$, while $\eta_R=0$ on $B_R$ and $0\le\eta_R\le1$. Hence
\begin{equation}
\rho(B_{R^2}^c) \le \int\eta_R\,d\rho \le \int\eta_R\,d\mu + \left|\int\eta_R\,d(\rho-\mu)\right| \le \mu(B_R^c) + \left(\frac{2\pi}{\log R}\right)^{1/2}\|E\|_{L^2},
\end{equation}
which proves \eqref{eq:planar-log-tightness}. The identity $\|E\|_{L^2}^2=2\mmd^2(\rho,\mu)$ gives \eqref{eq:planar-log-tightness-mmd}.
\end{proof}

If $\rho$ is a solution for which the energy inequality \eqref{eq:weak-energy-inequality} holds after some $s_0>0$, then \cref{lem:planar-log-tightness} gives
\begin{equation}\label{eq:planar-orbit-tightness}
\sup_{t\ge s_0}\rho_t(B_{R^2}^c)
\le
\mu(B_R^c)
+2\sqrt{\frac{\pi\mmd^2(\rho_{s_0},\mu)}{\log R}}
\longrightarrow0
\qquad\text{as }R\to\infty.
\end{equation}
Thus a finite-energy planar trajectory is uniformly tight without any moment assumption. This does not contradict the translated-source construction in \cref{sec:whole-space-obstructions}: in dimension two the energy of a fixed positive amount of mass translated to distance $R$ grows like $\log R$, so those examples do not remain in a fixed energy sublevel.

\subsection{Planar convergence}

The proof follows a standard LaSalle-type $\omega$-limit argument for Wasserstein gradient flows; see, e.g., \cite{CGW23}. We include the details needed to verify the compactness and rigidity inputs in the present whole-space planar Coulomb setting.

\begin{proof}[Proof of \cref{thm:planar-convergence}]
For $t\ge s_0$, abbreviate
\begin{equation}
F(t):=\mmd^2(\rho_t,\mu),
\qquad
D(t):=\dissip(\rho_t\mid\mu).
\end{equation}
By \eqref{eq:weak-energy-inequality}, $F$ has a nonincreasing representative and
\begin{equation}\label{eq:planar-tail-dissipation}
F(t)+\int_s^tD(\tau)\,d\tau\le F(s),
\qquad t\ge s\ge s_0.
\end{equation}
In particular, $F(t)$ converges to a finite limit. The tightness estimate \eqref{eq:planar-orbit-tightness} shows that $\{\rho_t:t\ge s_0\}$ is uniformly tight. Also, \cref{thm:Cauchy} gives
\begin{equation}\label{eq:planar-uniform-density}
M:=\sup_{t\ge s_0}\|\rho_t\|_{L^\infty}<\infty.
\end{equation}

Let $t_n\to\infty$ be arbitrary. By tightness, after passing to a subsequence,
\begin{equation}\label{eq:planar-omega-limit}
\rho_{t_n}\rightharpoonup\bar\rho
\end{equation}
narrowly for some $\bar\rho\in\P(\R^2)$. The bound \eqref{eq:planar-uniform-density} passes to the limit, so $\bar\rho$ is absolutely continuous and, identifying it with its density,
\begin{equation}
0\le\bar\rho\le M
\qquad\text{a.e.}
\end{equation}

Since $F$ has a limit, \eqref{eq:planar-tail-dissipation} gives
\begin{equation}\label{eq:unit-interval-dissipation}
\int_{t_n}^{t_n+1}D(t)\,dt
\le F(t_n)-F(t_n+1)
\longrightarrow0.
\end{equation}
Choose a Lebesgue point $s_n\in[t_n,t_n+1]$ of $D$ such that
\begin{equation}\label{eq:good-times}
D(s_n)\le\int_{t_n}^{t_n+1}D(t)\,dt\longrightarrow0.
\end{equation}
For every $\varphi\in C_c^1(\R^2)$, the continuity equation and Cauchy--Schwarz give
\begin{equation}
\left|\int\varphi\,d(\rho_{s_n}-\rho_{t_n})\right| \le \|\nabla\varphi\|_{L^\infty}\int_{t_n}^{s_n}\int_{\R^2}|E_t|\,d\rho_t\,dt \le \|\nabla\varphi\|_{L^\infty}\left(\int_{t_n}^{t_n+1}D(t)\,dt\right)^{1/2},
\end{equation}
where
\begin{equation}
E_t:=-\nabla\g\ast(\rho_t-\mu).
\end{equation}
It follows from \eqref{eq:unit-interval-dissipation} and uniform tightness that
\begin{equation}\label{eq:good-times-same-limit}
\rho_{s_n}\rightharpoonup\bar\rho
\end{equation}
narrowly.

Set $E_n:=E_{s_n}$. The uniform $L^1\cap L^\infty$ bounds for $\rho_{s_n}-\mu$ and the standard near--far decomposition yield
\begin{equation}
\sup_n\|E_n\|_{L^\infty}<\infty.
\end{equation}
For every $1<p<\infty$, Calder\'on--Zygmund estimates similarly give
\begin{equation}
\sup_n\|\nabla E_n\|_{L^p}<\infty.
\end{equation}
Choosing $p>2$ and using Morrey compactness, we may pass to a further subsequence such that
\begin{equation}\label{eq:omega-field-local-uniform}
E_n\longrightarrow\bar E
\qquad\text{locally uniformly on }\R^2.
\end{equation}

We identify the limit field. Let $\Phi\in C_c^\infty(\R^2;\R^2)$. Then
\begin{equation}
\int_{\R^2}E_n(x)\cdot\Phi(x)\,dx
=
\int_{\R^2}H_\Phi(y)\,d(\rho_{s_n}-\mu)(y),
\end{equation}
where
\begin{equation}
H_\Phi(y):=-\int_{\R^2}\nabla\g(x-y)\cdot\Phi(x)\,dx.
\end{equation}
The kernel $-\nabla\g$ is locally integrable and decays like $|y|^{-1}$ at infinity, so $H_\Phi\in C_0(\R^2)$. Therefore \eqref{eq:good-times-same-limit} implies
\begin{equation}\label{eq:omega-field-identification}
\bar E=-\nabla\g\ast(\bar\rho-\mu),
\qquad
\div\bar E=\bar\rho-\mu
\end{equation}
in the sense of distributions.

For every nonnegative $\chi\in C_c^\infty(\R^2)$, the locally uniform field convergence and narrow convergence of the measures give
\begin{equation}
\int_{\R^2}\chi|\bar E|^2\,d\bar\rho = \lim_{n\to\infty}\int_{\R^2}\chi|E_n|^2\,d\rho_{s_n} \le \|\chi\|_{L^\infty}\lim_{n\to\infty}D(s_n)=0.
\end{equation}
Thus
\begin{equation}\label{eq:omega-lagrangian-critical}
\bar E=0
\qquad \bar\rho\text{-a.e.}
\end{equation}
Since $\bar\rho,\mu\in L^1(\R^2)\cap L^\infty(\R^2)$ and $\g$ is even, define the normalized Coulomb potential
\begin{equation}
\bar h(x):=\int_{\R^2}\bigl(\g(x-y)-\g(y)\bigr)\,d(\bar\rho-\mu)(y).
\end{equation}
The integral is well defined and locally bounded: the logarithmic singularities are locally integrable, while, for every $R>0$, one has $\g(x-y)-\g(y)=O_R(|y|^{-1})$ as $|y|\to\infty$, uniformly for $x\in B_R$. Distributionally,
\begin{equation}
\nabla\bar h=\nabla\g\ast(\bar\rho-\mu)=-\bar E,
\qquad
\Delta\bar h=\mu-\bar\rho.
\end{equation}
Since $\mu-\bar\rho\in L^\infty(\R^2)$, Calder\'on--Zygmund estimates give $\bar h\in W^{2,p}_{\mathrm{loc}}(\R^2)$ for every $1<p<\infty$, and in particular $\bar h\in W^{1,1}_{\mathrm{loc}}(\R^2)$. By \eqref{eq:omega-lagrangian-critical}, $\nabla\bar h=0$ almost everywhere with respect to $\bar\rho$, hence also with respect to $(\bar\rho-\mu)^+\le\bar\rho$. Since $(\bar\rho-\mu)^+$ is absolutely continuous, \cref{thm:lagrangian-rigidity} gives $\bar\rho=\mu$.

We have proved that every sequence $t_n\to\infty$ has a subsequence along which $\rho_{t_n}$ converges narrowly to $\mu$. Uniform tightness then implies the full convergence \eqref{eq:planar-narrow-convergence}.

To prove \eqref{eq:planar-weak-star-convergence}, let $f\in L^1(\R^2)$ and choose $\varphi\in C_c(\R^2)$ with $\|f-\varphi\|_{L^1}$ arbitrarily small. Then
\begin{equation}
\left|\int_{\R^2}f(\rho_t-\mu)\,dx\right| \le \left|\int_{\R^2}\varphi(\rho_t-\mu)\,dx\right| + \left(M+\|\mu\|_{L^\infty}\right)\|f-\varphi\|_{L^1}.
\end{equation}
The first term tends to zero by narrow convergence, proving weak-$*$ convergence in $L^\infty$.

Finally, set $f_t:=\rho_t-\mu$. Narrow convergence implies
\begin{equation}
\widehat f_t(\xi)\longrightarrow0
\qquad\text{for every }\xi\in\R^2,
\end{equation}
while the mass and density bounds give
\begin{equation}
\sup_{t\ge s_0}\|f_t\|_{L^2}<\infty.
\end{equation}
Fix $\alpha>0$ and $R>0$. Since $|\widehat f_t|\le2$, dominated convergence gives
\begin{equation}
\int_{|\xi|\le R}(1+|\xi|^2)^{-\alpha}
|\widehat f_t(\xi)|^2\,d\xi
\longrightarrow0.
\end{equation}
By Plancherel,
\begin{equation}
\int_{|\xi|>R}(1+|\xi|^2)^{-\alpha}|\widehat f_t(\xi)|^2\,d\xi \le (1+R^2)^{-\alpha}\|\widehat f_t\|_{L^2}^2 \le C(1+R^2)^{-\alpha},
\end{equation}
uniformly for $t\ge s_0$. Sending first $t\to\infty$ and then $R\to\infty$ proves \eqref{eq:planar-negative-sobolev-convergence}.
\end{proof}

\begin{remark}[Failure of compactness at the Coulomb endpoint]\label{rem:planar-coulomb-endpoint}
The convergences asserted in \cref{thm:planar-convergence} do not imply convergence in MMD\@. To see this, let $\mu,\theta\in C_c^\infty(\R^2)$ be probability densities. For $R>e$, put
\begin{equation}
\theta_R(x):=\theta(x-Re_1),
\qquad
\varepsilon_R:=(\log R)^{-1/2},
\end{equation}
where $e_1=(1,0)$, and define
\begin{equation}
\rho_R:=(1-\varepsilon_R)\mu+\varepsilon_R\theta_R.
\end{equation}
Then $(\rho_R)_R$ is uniformly bounded in $L^\infty$, is tight, and satisfies
\begin{equation}
\rho_R\rightharpoonup\mu,
\qquad
\rho_R\stackrel{*}{\rightharpoonup}\mu\text{ in }L^\infty,
\qquad
\|\rho_R-\mu\|_{H^{-\alpha}}\longrightarrow0
\quad(\alpha>0).
\end{equation}
On the other hand,
\begin{equation}
\rho_R-\mu=\varepsilon_R(\theta_R-\mu).
\end{equation}
The compact supports and the expansion $\log|Re_1+x-y|=\log R+O(R^{-1})$ give
\begin{equation}
\mmd^2(\theta_R,\mu)
=
\frac{1}{2\pi}\log R+O(1).
\end{equation}
Consequently,
\begin{equation}
\mmd^2(\rho_R,\mu)
=
\varepsilon_R^2\mmd^2(\theta_R,\mu)
\longrightarrow\frac{1}{2\pi}.
\end{equation}
Thus a vanishing amount of mass at an increasingly remote scale can retain a nonzero amount of logarithmic Coulomb energy while disappearing in every topology asserted in \cref{thm:planar-convergence}. 
Obviously, this static sequence does not show that a particular solution fails to converge in MMD; but it does show that narrow convergence, uniform $L^\infty$ control, tightness, and bounded Coulomb energy alone do not force such convergence.
\end{remark}

\section{Open problems and future directions}\label{sec:future-directions}

The preceding results give a Cauchy theory for the Coulomb MMD flow, establish quantitative relaxation under several coercivity hypotheses, classify Lagrangian critical points for which $(\rho-\mu)^+$ is absolutely continuous, and prove qualitative planar convergence while exhibiting obstructions to uniform decay and deterioration of strong spatial regularity. Nevertheless, they leave open questions concerning selection, sharp coercivity, singular critical points, stronger convergence topologies, rates, and regularity. We record several of them below. 

\medskip\noindent\textbf{Measure initial data, singular targets, and canonical selection.}\quad
\Cref{thm:Cauchy} constructs a global weak solution from an arbitrary initial probability measure by smooth approximation, but uniqueness is proved only when the initial datum is essentially bounded. It would be useful to determine whether the equation defines a canonical semigroup on all of $\P(\Omega)$, and in particular whether the approximation limit is independent of the regularization procedure before one enters the bounded uniqueness class. Closely related is the comparison between solutions selected by smooth approximation and minimizing movements of JKO type. The assumption $\mu\in L^\infty$ is also structural in the present Cauchy theory: it enters the ultracontractive estimate and the compactness argument. In dimension one, quantile methods already provide a Wasserstein gradient-flow theory for arbitrary targets in $\P_2(\R)$, including discrete targets \cite{DuongSteinBeinertHertrichSteidl26}. In dimensions $\d\ge2$, extending the present Eulerian theory to singular targets, especially empirical measures, would require a suitable interpretation of the singular target field and of the dissipation at source--target collisions. A general higher-dimensional result should identify a natural class in which existence, stability, and selection are compatible.

\medskip\noindent\textbf{Sharp torus PL constants and optimal decay rates.}\quad
For the uniform target, \cref{thm:bounded-contrast-PL} gives the global-in-the-source constant $2/\d$. For a general target satisfying the density-ratio condition, the same theorem gives the constant
\begin{equation}
2\left(m-\frac{\d-1}{\d}M\right)
\end{equation}
when $m=\operatorname*{ess\,inf}\mu$ and $M=\operatorname*{ess\,sup}\mu$ satisfy the stated condition. We do not know whether the uniform-target constant or the admissible density-ratio range is sharp. It is natural to determine the optimal constant for the uniform target and the largest target class for which a global-in-the-source PL inequality holds. Outside the density-ratio regime, \cref{thm2} still gives every squared-MMD decay exponent strictly smaller than $2m/3$ for arbitrary measure initial data after any positive waiting time. What is the optimal decay rate uniform over arbitrary initial probability measures, and can it depend usefully on target regularity, geometry, or quantitative non-concentration? In dimension one, the periodic identity gives the global constant $2m$ for every uniformly positive target, so these questions principally concern dimensions at least two.

\medskip\noindent\textbf{Degenerate targets and non-PL relaxation on the torus.}\quad
For the Coulomb endpoint, Chizat, Colombo, Colombo, and Fern\'andez-Real \cite{CCCFR26} prove global weak-$*$ convergence in the bounded-density class even when the target is not uniformly positive. By \cref{prop:counterexvanishingmu}, a global PL inequality may fail even when the target vanishes at only one point. How are the relaxation rate and the topology of convergence determined by the order of vanishing, the dimension and geometry of the zero set, and the regularity of the target? Possible mechanisms include local \L{}ojasiewicz-type inequalities with an exponent different from the PL one, algebraic or stretched-exponential decay, and a distinction between convergence in MMD, Wasserstein, and uniform topologies. A sharp theory should connect these alternatives to quantitative features of the target degeneracy.

\medskip\noindent\textbf{Nonradial coercivity and confinement on the whole space.}\quad
The radial proof of \cref{thm:connected-support-PL} uses one-dimensional shell ordering to provide two ingredients at once: a static coercivity inequality when the target has connected support containing the source support, and a confinement principle which preserves this source-support inclusion along the flow. A basic question is whether either conclusion has a genuinely nonradial analogue. One may seek geometric hypotheses on the target support, such as convexity, star-shapedness, or quantitative boundary regularity, under which a PL inequality can be proved and the source remains confined. The two issues should be separated: a static inequality need not imply that its hypotheses are dynamically invariant. The counterexamples in \cref{sec:whole-space-obstructions} do not settle this problem, since they exploit sources placed arbitrarily far from the target and therefore violate source-support inclusion. Understanding the nonradial problem may require a replacement for shell ordering which combines a coercivity estimate controlling the Coulomb energy by the dissipation with a geometric confinement argument for the transported support.

\medskip\noindent\textbf{Fixed-data asymptotics after the support-travel-time obstruction.}\quad
\Cref{thm:whole-space-persistence} rules out convergence rates uniform over the location of a localized initial source, whereas \cref{thm:planar-convergence} proves narrow, weak-$*$ in $L^\infty$, and strong $H^{-\alpha}$ convergence for every solution constructed here on $\R^2$ whose Coulomb energy is finite at some positive time. The planar result does not establish convergence in the Coulomb MMD or provide a rate. Can one exclude the vanishing-mass, remote-energy mechanism along an actual trajectory and prove convergence in $\dot H^{-1}$? Which tail-energy, logarithmic-integrability, localization, or moment quantities would suffice, and how would they control a rate? To the best of our knowledge, in the selected weak-solution class of \cref{thm:Cauchy}, even narrow convergence for one fixed solution remains open in dimensions at least three; the Coulomb energy does not provide the logarithmic confinement mechanism used in dimension two.

\medskip\noindent\textbf{Singular Lagrangian critical points.}\quad
\Cref{thm:lagrangian-rigidity} classifies every Lagrangian critical point for which $(\rho-\mu)^+$ is absolutely continuous. As \cref{rem:singular-lagrangian-criticality} shows, nontrivial configurations with $(\rho-\mu)^+$ singular can already occur once a representative of the field is specified. The results of Boufad\`ene and Vialard discussed in \cref{rem:BV-critical-points} exclude certain lower-dimensional cases for $(\rho-\mu)^+$ under their stronger Wasserstein-critical notion, but full-dimensional singular cases and some mixed absolutely continuous--singular configurations remain unclassified. A complete theory should characterize Lagrangian critical points for which $(\rho-\mu)^+$ is singular, distinguish them from blocked-JKO critical points, and identify the capacity-sensitive representative of the Coulomb field appropriate on sets charged by singular sources.

\medskip\noindent\textbf{Fine-scale formation and optimal regularity propagation.}\quad
The construction in \cref{prop:regularitycounterex} shows that instantaneous $L^\infty$ regularization can coexist with exponential growth of H\"older seminorms and unbounded growth of every supercritical Sobolev norm along a smooth radial solution. Several regularity questions remain. Are there matching upper bounds for H\"older or Sobolev growth, and is the exponent produced by the example optimal? More generally, a precise description of how MMD convergence can coexist with deterioration of strong spatial norms would clarify the distinction between relaxation of the discrepancy and regularization of the evolving density.

\bibliographystyle{alpha}
\bibliography{coulomb_refs}
\end{document}